\documentclass[reqno,11pt]{amsart}
\usepackage[left=1in,right=1in,top=1in,bottom=1in]{geometry}
\usepackage{enumitem}
\usepackage{amsmath}
\usepackage{graphicx}
\usepackage{tikz}
\usepackage{esint}
\usetikzlibrary{arrows, decorations.markings,matrix,trees}
\usepackage{hyperref}
\hypersetup{
   colorlinks=true,
   citecolor=blue,
   filecolor=blue,
   linkcolor=blue,
   urlcolor=blue
}

\newcommand{\al}{\alpha}       
    
\newcommand{\lda}{\lambda}
\newcommand{\om}{\Omega}            
\newcommand{\pa}{\partial}
\newcommand{\va}{\varepsilon}       
\newcommand{\ud}{\mathrm{d}}
\newcommand{\be}{\begin{equation}} 
\newcommand{\ee}{\end{equation}}

\newcommand{\cB}{\mathcal{B}}

\newcommand{\cC}{\mathcal{C}}
\newcommand{\C}{\mathbb{C}}

\newcommand{\cL}{\mathcal{L}} 

\newcommand{\Z}{\mathbb{Z}}

\newcommand{\R}{\mathbb{R}}

\newcommand{\ga}{\gamma}

\newcommand{\sg}{\sigma} 
\newcommand{\ift}{\infty} 
\newcommand{\wt}{\widetilde}

\newcommand{\f}{\frac}

\newcommand{\ol}{\overline}

\newcommand{\op}{\operatorname}

\DeclareMathOperator{\dist}{dist}

\def\<{\langle}\def\>{\rangle}
\def\({\left(}\def\){\right)}
\numberwithin{equation}{section}
\theoremstyle{plain}
\newtheorem{thm}{Theorem}[section]
\newtheorem{cor}[thm]{Corollary}

\newtheorem{lem}[thm]{Lemma}
\newtheorem{prop}[thm]{Proposition}

\theoremstyle{definition}
\newtheorem{defn}[thm]{Definition}

\theoremstyle{remark}
\newtheorem{rem}[thm]{Remark}

\title[Parabolic Green's Functions in Homogenization]{Sharp Convergence Rates for Parabolic Green's Functions in Time-Independent Periodic Homogenization}

\date{}

\author{Wei Wang}
\address{School of Mathematical Sciences, Peking University, Beijing 100871, China}
\email{wwmath166@outlook.com,\,\,2201110024@stu.pku.edu.cn}

\begin{document}

\begin{abstract}
We study Dirichlet Green's functions associated with second-order parabolic systems with rapidly oscillating periodic coefficients that are symmetric and independent of time. For bounded $C^{1,1}$ domains, we obtain a sharp zeroth-order convergence estimate from the oscillating Green's function to its homogenized counterpart, with the optimal rate $O(\va)$ and Gaussian off-diagonal decay. For bounded $C^{2,1}$ domains, we also prove a first-order expansion for the spatial gradient in terms of Dirichlet correctors, with an $O(\va)$ error up to a logarithmic factor. In this time-independent symmetric setting, these results improve the convergence rates established in \cite{Gen23} for parabolic systems with time-dependent periodic coefficient matrices.
\end{abstract}

\subjclass[2020]{Primary 35B27, 35K40; Secondary 35J08, 35B40}

\keywords{Periodic homogenization, parabolic systems, Green's functions, resolvent estimates, Dirichlet correctors, Gaussian estimates}
\maketitle

\section{Introduction}

\subsection{Background and main results}

Let $n\geq2$ and $m\geq1$ be integers. In this paper, we study the asymptotic behavior of the matrix-valued Green's functions associated with the time-independent parabolic homogenization system
\[
\pa_t+\cL_\va,
\quad
\cL_\va=-\op{div}\left[A\left(\f{x}{\va}\right)\nabla\right],
\quad
\va\in(0,1].
\]
Here
\[
A(y)=(a_{ij}^{\al\beta}(y))_{\substack{i,j\in\Z\cap[1,n]\\\al,\beta\in\Z\cap[1,m]}},
\quad y\in\R^n,
\]
is a real coefficient matrix satisfying the following assumptions. Throughout the paper, repeated indices are summed over their natural ranges.

\begin{itemize}
\item Symmetry: for any $i,j\in\Z\cap[1,n]$ and $\al,\beta\in\Z\cap[1,m]$,
\begin{equation}\label{intro-symmetry}
a_{ij}^{\al\beta}(y)=a_{ji}^{\beta\al}(y).
\end{equation}

\item Uniform ellipticity:
\begin{equation}\label{intro-ellipticity}
\mu|\xi|^2
\leq
a_{ij}^{\al\beta}(y)\xi_i^\al\xi_j^\beta
\leq
\mu^{-1}|\xi|^2
\end{equation}
for any $\xi=(\xi_i^\al)\in\R^{m\times n}$ and $y\in\R^n$.

\item Periodicity:
\begin{equation}\label{intro-periodicity}
A(y+z)=A(y)
\quad
\text{for any }z\in\Z^n.
\end{equation}

\item H\"older continuity:
\begin{equation}\label{intro-holder}
|A(x)-A(y)|
\leq
\tau |x-y|^\nu,
\quad
x,y\in\R^n,
\end{equation}
for some $\tau>0$ and $\nu\in(0,1)$.
\end{itemize}

Let $\cL_0=-\op{div}(\widehat A\nabla)$ be the homogenized operator associated with $\cL_\va$. The homogenized matrix $\widehat A$ is defined through the usual elliptic cell problems. More precisely, for $j\in\Z\cap[1,n]$ and $\beta\in\Z\cap[1,m]$, let
\[
\chi_j^\beta=(\chi_j^{\al\beta})_{\al\in\Z\cap[1,m]}
\]
be the periodic corrector satisfying
\begin{equation}\label{intro-cell-problem}
\left\{
\begin{aligned}
&-\op{div}(A(y)\nabla \chi_j^\beta)=
\op{div}(A(y)\nabla P_j^\beta)\quad
\text{in }\mathbb T^n,
\\
&\chi_j^\beta\in H^1(\mathbb T^n,\R^m),
\quad
\int_{\mathbb T^n}\chi_j^\beta(y)\ud y=0,
\end{aligned}
\right.
\end{equation}
where $P_j^\beta(y)=y_j e^\beta$ and $e^\beta$ denotes the $\beta$-th coordinate vector in $\R^m$. Then
\begin{equation}\label{intro-homogenized-coefficients}
\widehat a_{ij}^{\al\beta}
=
\int_{\mathbb{T}^n}
\left[
a_{ij}^{\al\beta}(y)
+
a_{ik}^{\al\gamma}(y)
\f{\pa}{\pa y_k}\chi_j^{\gamma\beta}(y)
\right]\ud y.
\end{equation}

Since $A$ is independent of time, it follows from \cite[Section 3]{GS15} that $\pa_t+\cL_0$ is the homogenized operator of $\pa_t+\cL_\va$. Under the symmetry assumption \eqref{intro-symmetry}, the homogenized matrix $\widehat A$ is also symmetric:
\be
\widehat{a}_{ij}^{\al\beta}
=
\widehat{a}_{ji}^{\beta\al}
\quad
\text{for any }i,j\in\Z\cap[1,n]
\text{ and }\al,\beta\in\Z\cap[1,m].
\label{syl0}
\ee
Moreover, if $A$ satisfies \eqref{intro-ellipticity}, then there exists $\mu_1>0$, depending only on $\mu$, such that
\be
\mu_1|\xi|^2
\leq
\widehat{a}_{ij}^{\al\beta}\xi_i^\al\xi_j^\beta
\leq
\mu_1^{-1}|\xi|^2
\quad
\text{for any }\xi=(\xi_i^\al)\in\R^{m\times n}.
\label{ell0}
\ee
After replacing $\mu$ with $\min(\mu,\mu_1)$, we use the same ellipticity constant $\mu$ for both $A$ and $\widehat A$.

The homogenization theory for elliptic systems is well developed. Under the ellipticity and periodicity assumptions alone, one has, up to a subsequence, weak $H^1$ convergence of solutions $u_\va$ of $\cL_\va u_\va=F\in H^{-1}$ to a solution of the homogenized equation $\cL_0u_0=F$; see \cite[Chapter 1]{ZKO94}. For classical treatments of periodic homogenization and the method of two-scale asymptotic expansions, we refer to \cite{BLP78,OSY92,ZKO94}. Quantitative convergence rates in various settings were subsequently obtained in \cite{Gri04,Gri06,KLS12,KLS13,OV07,Sus13a,Sus13b}. Birman--Suslina developed a complementary operator-theoretic approach, including estimates with correctors for periodic differential operators; see, for example, \cite{BS04,BS06}. Another major direction is the study of estimates that are uniform with respect to the parameter $\va$. For the Dirichlet problem, Avellaneda--Lin \cite{AL87} developed the compactness method and proved uniform Lipschitz estimates for solutions of $\cL_\va u_\va=0$ under \eqref{intro-ellipticity}--\eqref{intro-holder}. Shen \cite{She08} established uniform $W^{1,p}$ estimates for coefficients in the $\op{VMO}$ class. For Neumann boundary conditions, Kenig--Lin--Shen \cite{kLS13b} obtained uniform regularity estimates under the additional symmetry assumption, and the symmetry condition was later removed by Armstrong--Shen \cite{AS16}. Shen's monograph \cite{She18} gives a systematic treatment of quantitative periodic homogenization for second-order elliptic systems, including convergence rates, uniform interior and boundary estimates, and kernel expansions. In recent years, related quantitative and uniform regularity theories have developed in several directions, including higher-order elliptic systems \cite{NSX18,NX19}, elliptic systems with lower-order terms \cite{Xu16Lower,Xu16LowerNeu,WZ23}, and reiterated or multiscale periodic media \cite{NSX20,NXZ26}.

It is natural to consider the corresponding parabolic operators $\pa_t+\cL_\va$. For time-independent coefficient matrices, preliminary results can be found in \cite[Chapter 2]{ZKO94}. A more general problem allows the coefficients to depend on both space and time. In this setting, Geng--Shen established interior Lipschitz estimates and boundary $W^{1,p}$ estimates in \cite{GS15}, together with quantitative convergence results in \cite{GS17} and asymptotic expansions of fundamental solutions in \cite{GS20b}. Boundary Lipschitz estimates parallel to those of Avellaneda--Lin were obtained by Geng--Shi in \cite{GS20}, where Green's functions for parabolic operators were also constructed. The estimates of Green's functions used in this line of parabolic homogenization are naturally connected with the parabolic system theory of Hofmann--Kim \cite{HK04}, Cho--Dong--Kim \cite{CDK08,CDK12}, and Dong--Kim \cite{DK14}. More recent results of parabolic homogenization include multiscale non-self-similar and locally periodic problems \cite{GengShen20NS,GengNiu25}, re-invented spatial and temporal scales \cite{Niu24Para}, locally periodic coefficients \cite{Xu23Para}, and fundamental solution expansions on non-self-similar scales \cite{MengNiu24}. For Neumann boundary conditions, convergence rates for initial-Neumann problems were already included in \cite{GS17}, and recent estimates of the regularity of the boundary and the Neumann function were obtained in \cite{Shi25Neu}.

We now fix the notation for the Green's functions used in this paper. Let $\om\subset\R^n$ be a bounded $C^{1,1}$ domain. For $\va\in[0,1]$, where $\va=0$ refers to the homogenized operator $\cL_0$, let
\[
G_\va(x,t;y,s)
=(G_\va^{\al\beta}(x,t;y,s))_{\al,\beta\in\Z\cap[1,m]}
\]
denote the Dirichlet Green's function, or Dirichlet heat kernel, with pole at $(y,s)$ for $\pa_t+\cL_\va$ in $\om\times\R$. It is characterized by
\[
\left\{
\begin{aligned}
(\pa_t+\cL_\va)G_\va(\cdot,\cdot;y,s)
&=
\delta_{(y,s)}(\cdot,\cdot)\op{Id}_m
&&\text{in }\om\times\R,
\\
G_\va(\cdot,\cdot;y,s)
&=0
&&\text{on }\pa\om\times\R,
\\
G_\va(x,t;y,s)
&=0
&&\text{if }t\leq s,
\\
\lim_{t\to s^+}G_\va(\cdot,t;y,s)
&=
\delta_y(\cdot)\op{Id}_m
&&\text{in the sense of distributions in }\om.
\end{aligned}
\right.
\]
See Proposition \ref{defn-parabolic-Green} for the existence and basic properties of $G_\va$.

Our first main result gives the sharp zeroth-order convergence rate for the Dirichlet Green's functions, with the same Gaussian off-diagonal decay as the parabolic kernel itself.

\begin{thm}\label{intro-thm-parabolic-zero}
Let $\va\in(0,1]$, $n\geq2$, and let $\om\subset\R^n$ be a bounded $C^{1,1}$ domain. Assume that $A$ satisfies \eqref{intro-symmetry}--\eqref{intro-holder}. Then, for any $x,y\in\om$ and any $t>s$,
\begin{equation}\label{intro-parabolic-zero}
\left|
G_\va(x,t;y,s)-G_0(x,t;y,s)
\right|
\leq
\f{C\va}{(t-s)^{\f{n+1}{2}}}
\exp\left\{
-\f{\kappa|x-y|^2}{t-s}
\right\},
\end{equation}
where $C,\kappa>0$ depend only on $n$, $m$, $\mu$, $\tau$, $\nu$, and $\om$.
\end{thm}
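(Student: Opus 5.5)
Since the coefficients are time-independent and symmetric, $\cL_\va$ with Dirichlet conditions is a nonnegative self-adjoint operator on $L^2(\om;\R^m)$, and $G_\va(x,t;y,s)=e^{-(t-s)\cL_\va}(x,y)$. I would represent the heat semigroups through resolvents,
\[
e^{-t\cL_\va}-e^{-t\cL_0}=\f{1}{2\pi i}\int_\Gamma e^{-t\lda}\Big[(\lda-\cL_\va)^{-1}-(\lda-\cL_0)^{-1}\Big]\,\ud\lda ,
\]
with $\Gamma$ a sector contour around $[0,\infty)$. The key input is a kernel bound for the resolvent difference, uniform in $\lda$ along $\Gamma$:
\[
\big|(\lda-\cL_\va)^{-1}(x,y)-(\lda-\cL_0)^{-1}(x,y)\big|\le \f{C\va}{|x-y|^{n-1}}\,e^{-c|\lda|^{1/2}|x-y|}
\]
(with the obvious modification for $n=2$). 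For $\lda=0$ this is the known elliptic Green's function estimate $|G^{ell}_\va-G^{ell}_0|\le C\va|x-y|^{1-n}$ in $C^{1,1}$ domains (Kenig--Lin--Shen). For complex $\lda$ in a sector, I would follow the same duality argument. First, an $O(\va)$ $L^2$ convergence rate for the Dirichlet problem with spectral parameter $(\cL_\va-\lda)u=f$, uniform in $\lda$ after scaling by $|\lda|$. This comes from the two-scale expansion with a boundary layer cutoff combined with $H^2$ estimates for $\cL_0-\lda$ in $C^{1,1}$ domains. Second, the uniform Lipschitz and boundary Hölder estimates of Avellaneda--Lin type for $\cL_\va-\lda$, obtained by adding a dummy variable, i.e.\ viewing $\lda u$ as $-\pa_{ss}$ in $n+1$ dimensions, or by Agmon's trick. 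Third, local-to-pointwise upgrading on balls of radius $\sim|x-y|$. Exponential decay in $|\lda|^{1/2}|x-y|$ would come from a Caccioppoli/Agmon weighted estimate.

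Integrating over $\Gamma$ then gives the bound $C\va\,t^{-(n+1)/2}$ on the diagonal scale. Getting the Gaussian factor $\exp(-\kappa|x-y|^2/t)$ rather than a merely polynomial one is the main obstacle. With the exponential resolvent bound, I would deform the contour to pass near $|\lda|\sim |x-y|^2/t^2$, which yields $e^{-c|x-y|^2/t}$, as in standard derivations of Gaussian bounds from resolvent estimates. Alternatively, I would combine a non-Gaussian $O(\va)$ bound with the Gaussian bounds for $G_\va$ and $G_0$ separately, Proposition \ref{defn-parabolic-Green}. The semigroup splitting
\[
G_\va-G_0=\int_\om G_\va(x,t;z,\tfrac{t+s}{2})\,[G_\va-G_0](z,\tfrac{t+s}{2};y,s)\,\ud z+\int_\om [G_\va-G_0](x,t;z,\tfrac{t+s}{2})\,G_0(z,\tfrac{t+s}{2};y,s)\,\ud z
\]
lets one propagate Gaussian decay, but the difference factor still needs some decay. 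So the cleanest route is a weighted (Davies perturbation) version of the above: conjugate by $e^{\psi}$ with $\psi$ Lipschitz and $|\nabla\psi|\le\rho$. The $O(\va)$ resolvent-difference estimates hold uniformly for $e^{\psi}(\lda-\cL_\va)^{-1}e^{-\psi}$ with $\lda$ shifted by $C\rho^2$, since the perturbed operators have the same structure with lower-order terms of size $\rho$. One then gets $|G_\va-G_0|\le C\va t^{-(n+1)/2}e^{C\rho^2 t+\psi(y)-\psi(x)}$, and optimizing over $\rho$ gives the Gaussian.

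Concretely, the steps are: (i) the $O(\va)$ $L^2\to L^2$ bound $\|e^{-t\cL_\va}-e^{-t\cL_0}\|\le C\va t^{-1/2}$, via the resolvent convergence rate $\|(\cL_\va-\lda)^{-1}-(\cL_0-\lda)^{-1}\|_{L^2\to L^2}\le C\va|\lda|^{-1/2}$, with Davies weights included; (ii) $L^1\to L^\infty$ upgrading by factoring $e^{-t\cL}=e^{-t\cL/3}\,e^{-t\cL/3}\,e^{-t\cL/3}$ and using the uniform $L^2\to L^\infty$ bounds $Ct^{-n/4}$ from the Gaussian estimates of Proposition \ref{defn-parabolic-Green}. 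Step (ii) alone loses nothing in $\va$, but it requires writing the difference telescopically, which is fine. The hardest part is (i) with weights near the boundary: a $C^{1,1}$ domain suffices because only $H^2$ regularity of $\cL_0$ is used, while the boundary layer contributes $O(\va^{1/2})$ in $H^1$, which duality must improve to $O(\va)$ in $L^2$.
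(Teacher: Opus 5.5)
\textbf{There is a genuine gap: the near-diagonal regime $|x-y|\ll\tau^{\f12}$, $\tau=t-s$, is never handled.}

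Your elliptic input, the sectorial bound $|G_{\va,\lda}-G_{0,\lda}|(x,y)\le C\va|x-y|^{1-n}\exp\{-c|\lda|^{\f12}|x-y|\}$, is exactly the paper's \eqref{resolvent-zero-exp}. Your contour choice near $|\lda|\sim|x-y|^2/t^2$ is the paper's $\gamma=ar^2/\tau^2$. So once the exponential resolvent decay is available, the Gaussian factor follows from the contour choice and is not the main obstacle.

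The real obstacle is the diagonal. Suppose you bound the contour integral of $e^{\tau\zeta}D_{\va,\zeta}(x,y)$ by absolute values, on a sector contour as you propose, using only this pointwise bound. You get $C\va|x-y|^{1-n}\tau^{-1}\exp\{-\kappa|x-y|^2/\tau\}$. This exceeds $C\va\tau^{-\f{n+1}{2}}$ by the factor $(\tau^{\f12}/|x-y|)^{n-1}$ and blows up as $x\to y$. Hence the claim that integrating over $\Gamma$ gives $C\va t^{-(n+1)/2}$ holds only when $|x-y|\sim t^{\f12}$.

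\textbf{Your fallback (i)+(ii) does not close the gap.} Set $S_\va=e^{-t\cL_\va/3}$ and $S_0=e^{-t\cL_0/3}$. The telescoping
$S_\va^3-S_0^3=S_\va^2(S_\va-S_0)+S_\va(S_\va-S_0)S_0+(S_\va-S_0)S_0^2$
lets you bound only the middle term, by $\|S_\va\|_{L^2\to L^\infty}\cdot C\va t^{-\f12}\cdot\|S_0\|_{L^1\to L^2}$. The two end terms require $\|e^{-t\cL_\va}-e^{-t\cL_0}\|_{L^2\to L^\infty}\le C\va t^{-\f12-\f n4}$. That does not follow from an $L^2\to L^2$ rate plus separate uniform bounds. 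Interpolation loses a power of $\va$, and re-splitting an end term just reproduces it at a smaller time. Step (ii) is exactly where uniform boundary regularity and an $O(\va)$ local approximation estimate must enter, so it is not a formality.

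\textbf{How the paper handles the diagonal.} It integrates by parts $M$ times along $\mathcal C_\gamma$ with $2M>n+1$. It then expands
$\pa_\zeta^MD_{\va,\zeta}=(-1)^MM!\sum_{\ell}G_{\va,-\zeta}^{\star\ell}\star D_{\va,\zeta}\star G_{0,-\zeta}^{\star(M-\ell)}$.
Because $D_{\va,\zeta}$ has a pointwise kernel bound, the end terms $\ell=0$ and $\ell=M$ are harmless. Lemma \ref{lem-Bessel-kernel-convolution} then raises the singularity index from $1$ to $2M+1>n$. This yields the bounded kernel $C\va|\zeta|^{\f{n-1-2M}{2}}\exp\{-c\operatorname{Re}(\sqrt\zeta)|x-y|\}$.

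\textbf{A repair within your own ingredients.} Apply your half-time splitting $e^{-\tau\cL_\va}-e^{-\tau\cL_0}=e^{-\tau\cL_\va/2}D(\tau/2)+D(\tau/2)e^{-\tau\cL_0/2}$ to the pointwise bound $C\va|z-y|^{1-n}\tau^{-1}\exp\{-\kappa|z-y|^2/\tau\}$, rather than to $L^2$ norms. You also need uniform Gaussian bounds for $G_\va$ and $G_0$. Proposition \ref{defn-parabolic-Green} does not provide these, so they must be imported, e.g.\ from \cite{GS20}. Then $\int|z-y|^{1-n}\exp\{-c|z-y|^2/\tau\}\,\ud z\le C\tau^{\f12}$ gives the theorem.
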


\begin{rem}\label{rem-intro-parabolic-zero}
Several comments clarify the position of Theorem \ref{intro-thm-parabolic-zero} among known convergence results.

\begin{enumerate}
\item At the level of fundamental solutions, the estimate \eqref{intro-parabolic-zero} is consistent with the convergence result in \cite{GS20b}. In both cases, the convergence rate is $O(\va)$, which is sharp in general.

\item The time independence of $A$ is used to recover this sharp rate. For space-time periodic coefficients, related convergence estimates were obtained in \cite{Gen23}, but the general rate there is $O(\va^{\f{1}{2}-})$. Thus, Theorem \ref{intro-thm-parabolic-zero} shows that the sharper elliptic-in-space rate obtained in \cite{KLS14} persists for the parabolic Green's function when the coefficients are independent of time.

\item In the scalar case, \cite[Corollary 2.7]{ZKO94} gives an $O(\va)$ convergence result for parabolic Green's functions. However, that estimate does not include the Gaussian exponential decay in \eqref{intro-parabolic-zero}. Therefore, Theorem \ref{intro-thm-parabolic-zero} may be viewed as a Gaussian off-diagonal version of the sharp $O(\va)$ convergence estimate for systems. It is proved in a more restrictive setting than the fully time-dependent theory of \cite{Gen23}, but in a more general setting than the scalar result of \cite[Corollary 2.7]{ZKO94}.
\end{enumerate}
\end{rem}

The first-order expansion involves the matrix of Dirichlet correctors. These correctors were introduced in this elliptic homogenization setting by Avellaneda--Lin \cite{AL87} and are particularly well suited to boundary estimates for Dirichlet problems. Since the coefficients in the present paper are independent of time, the parabolic Dirichlet corrector appearing in the time-dependent theory reduces to the elliptic Dirichlet corrector. For $j\in\Z\cap[1,n]$ and $\beta\in\Z\cap[1,m]$, let
\[
\Phi_{\va,j}^\beta=(\Phi_{\va,j}^{\al\beta})_{\al\in\Z\cap[1,m]}
\]
solve the Dirichlet problem
\begin{equation}\label{intro-dirichlet-corrector}
\left\{
\begin{aligned}
\cL_\va(\Phi_{\va,j}^\beta)&=0
&&\text{in }\om,
\\
\Phi_{\va,j}^\beta&=P_j^\beta
&&\text{on }\pa\om.
\end{aligned}
\right.
\end{equation}
Using this Dirichlet corrector, we obtain the following first-order expansion for the spatial gradient of the parabolic Green's function.

\begin{thm}\label{intro-thm-parabolic-first}
Let $\va\in(0,1]$, $n\geq2$, and let $\om\subset\R^n$ be a bounded $C^{2,1}$ domain. Assume that $A$ satisfies \eqref{intro-symmetry}, \eqref{intro-ellipticity}, \eqref{intro-periodicity}, and \eqref{intro-holder}. Then, for any $x,y\in\om$, any $t>s$, and any $\al,\ga\in\Z\cap[1,m]$,
\begin{equation}\label{intro-parabolic-first}
\begin{aligned}
&\left|
\f{\pa}{\pa x_i}G_\va^{\al\gamma}(x,t;y,s)
-
\f{\pa\Phi_{\va,j}^{\al\beta}}{\pa x_i}(x)
\f{\pa}{\pa x_j}G_0^{\beta\gamma}(x,t;y,s)
\right|
\\
&\quad\quad\quad\leq
\f{
C\va
\log(2+\va^{-1}(t-s)^{\f{1}{2}})
}{(t-s)^{\f{n+2}{2}}}
\exp\left\{-\f{\kappa|x-y|^2}{t-s}\right\}.
\end{aligned}
\end{equation}
Here, the repeated indices $j$ and $\beta$ are summed over their natural ranges, and $C,\kappa>0$ depend only on $n$, $m$, $\mu$, $\tau$, $\nu$, and $\om$.
\end{thm}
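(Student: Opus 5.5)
The plan is to use time-independence to reduce the parabolic statement to resolvent estimates for the elliptic operators $\cL_\va+\lambda$ with complex $\lambda$ in a sector. The abstract and keywords point to this route. Since $A$ is symmetric and time-independent, $\cL_\va$ is a nonnegative self-adjoint operator on $L^2(\om)^m$ with Dirichlet condition. Hence $G_\va(x,t;y,s)=e^{-(t-s)\cL_\va}(x,y)$, and this kernel can be written as a contour integral
\[
G_\va(\cdot,t;\cdot,s)=\f{1}{2\pi i}\int_{\Gamma}e^{\lambda(t-s)}(\lambda+\cL_\va)^{-1}\,\ud\lambda ,
\]
where $\Gamma$ is the boundary of a sector $\{|\arg\lambda|<\pi-\theta\}$. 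The expansion \eqref{intro-parabolic-first} then follows from a kernel-level first-order expansion for the resolvent Green's function $\Gamma_\va(x,y;\lambda)$ of $\cL_\va+\lambda$. Specifically, I aim to prove
\[
\left|\nabla_x\Gamma_\va(x,y;\lambda)-\nabla\Phi_\va(x)\nabla_x\Gamma_0(x,y;\lambda)\right|
\le \f{C\va\log(2+\va^{-1}|x-y|)}{|x-y|^{n}}\,e^{-c|\lambda|^{1/2}|x-y|}
\]
for $|x-y|\gtrsim |\lambda|^{-1/2}$, and an analogous bound with $|x-y|$ replaced by $|\lambda|^{-1/2}$ in the near-diagonal regime. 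Deforming the contour through $|\lambda|\sim (t-s)^{-1}$ and integrating yields the factor $(t-s)^{-(n+2)/2}$ together with $\log(2+\va^{-1}(t-s)^{1/2})$.

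The Gaussian decay, however, cannot come directly from the exponential decay $e^{-c|\lambda|^{1/2}|x-y|}$ via the contour integral. I would obtain it by a separate interpolation. First, establish the on-diagonal bound (no exponential factor) from the resolvent route. Second, use uniform Gaussian bounds for $\nabla_x G_\va$ and $\nabla_x G_0$, together with the uniform bound $|\nabla\Phi_\va|\le C$ (Avellaneda--Lin). These give the crude estimate $C(t-s)^{-(n+1)/2}e^{-\kappa|x-y|^2/(t-s)}$ for the difference. Interpolating the two bounds in the regime $|x-y|^2\gg t-s$ only produces $\va^{1/2}$-type losses, though. To avoid this, I would rather run the whole argument localized: use Davies' perturbation, i.e.\ conjugate by $e^{\psi}$ with $|\nabla\psi|\le \rho$, so that $e^{\psi}\cL_\va e^{-\psi}$ remains sectorial for $\rho^2\lesssim|\lambda|$. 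Proving all resolvent estimates for the twisted operators uniformly in $\rho$ then gives the Gaussian factor after optimizing in $\rho$.

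For the resolvent expansion itself, I would proceed in three steps.

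First, build the $O(\va)$ $L^2$ convergence for $(\cL_\va+\lambda)^{-1}-(\cL_0+\lambda)^{-1}$ with the correct $|\lambda|$-scaling. This is done by duality together with the standard first-order approximation $u_0+\va\chi(x/\va)\eta_\va\nabla u_0$, using $C^{1,1}$ regularity for the $\cL_0$-resolvent. Theorem \ref{intro-thm-parabolic-zero}, presumably proved first, provides the zeroth-order kernel estimate I can reuse.

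Second, consider the Dirichlet-corrector remainder
\[
w_\va=u_\va-u_0-(\Phi_{\va,j}^\beta-P_j^\beta)\pa_ju_0^\beta .
\]
Since $\Phi_\va-P$ vanishes on $\pa\om$, $w_\va$ has zero boundary data. It satisfies
\[
(\cL_\va+\lambda)w_\va=\op{div}(\cdots)+\lambda(\Phi_\va-P)\nabla u_0+\cdots .
\]
The right-hand side involves $\nabla^2u_0$ and $\nabla^3 u_0$ multiplied by $\Phi_\va-P$, which is $O(\va)$ because $\|\Phi_\va-P\|_\infty\le C\va$. This is where $C^{2,1}$ is needed: it controls $\nabla^2\Gamma_0$ and $\nabla^3\Gamma_0$ up to the boundary.

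Third, apply the uniform boundary Lipschitz estimate for $\cL_\va+\lambda$ (Avellaneda--Lin type, valid uniformly for $\lambda$ in the sector) at scale $r\sim|x-y|$. The $L^\infty$ bound on $w_\va$ then converts to a gradient bound. The logarithm arises exactly as in Kenig--Lin--Shen \cite{KLS14}: summing the contribution of $\nabla^2u_0$ over dyadic annuli between scale $\va$ and $r$, with $r\wedge|\lambda|^{-1/2}$ replacing $r$, which accounts for $\log(2+\va^{-1}(t-s)^{1/2})$.

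The main obstacle is the third step, carried out uniformly in $\lambda$ and in the Davies weight. I need uniform (in $\va$ and $\lambda$) boundary Lipschitz estimates for $\cL_\va+\lambda$ with complex $\lambda$ near the sector boundary. Compactness arguments break down for large $|\lambda|\va^2$, so for $|\lambda|\ge\va^{-2}$ I would instead use the classical Schauder/Lipschitz theory at scale $\va$, where the operator is no longer oscillatory at scale $|\lambda|^{-1/2}$. The regimes are glued at $|\lambda|\sim\va^{-2}$, i.e.\ $t-s\sim\va^2$. There the bound to prove is trivial: it follows from the Gaussian gradient bounds, since $\va/(t-s)^{1/2}\gtrsim1$.
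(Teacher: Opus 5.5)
Your reduction (self-adjointness, inverse Laplace transform, and an exponentially decaying first-order resolvent bound obtained from the local Dirichlet-corrector approximation at scale $|x-y|$) is also the paper's starting point. However, the step that produces $(t-s)^{-\f{n+2}{2}}$ near the diagonal fails.

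\textbf{The near-diagonal resolvent bound is false.} The ``analogous bound with $|x-y|$ replaced by $|\lambda|^{-1/2}$ in the near-diagonal regime'' does not hold. For fixed $\lambda$ the resolvent kernel carries the diagonal singularity of the elliptic Green's function, so the error $\nabla_xG_{\va,\lambda}-\nabla\Phi_\va\nabla_xG_{0,\lambda}$ is unbounded as $x\to y$:
\begin{itemize}
\item for $|x-y|\ll\va$ it is of order $|x-y|^{1-n}$, since there $\nabla_xG_{\va,\lambda}$ behaves like the gradient of the fundamental solution for the frozen matrix $A(y/\va)$;
\item for $\va\ll|x-y|\ll|\lambda|^{-1/2}$ it is in general of order $\va|x-y|^{-n}$.
\end{itemize}
With the correct pointwise bound $\va\log(2+\va^{-1}|x-y|)|x-y|^{-n}e^{-c|\lambda|^{1/2}|x-y|}$, any contour through $|\lambda|\sim(t-s)^{-1}$ gives at best about $\va\log(\cdots)|x-y|^{-n}(t-s)^{-1}$ when $|x-y|\ll(t-s)^{1/2}$. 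This blows up at $x=y$. The parabolic kernel is regular there only because of cancellation in $\lambda$, which an absolute-value bound on the resolvent kernel throws away.

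\textbf{The missing idea is a smoothing device, and it is the core of the paper's proof.} Integrate by parts $M$ times along $\mathcal C_\gamma$ and use $\pa_\zeta^M(\cL+\zeta)^{-1}=(-1)^MM!(\cL+\zeta)^{-(M+1)}$ to write
\[
\pa_\zeta^ME_{\va,\zeta}=(-1)^MM!\Big[E_{\va,\zeta}\star G_{0,-\zeta}^{\star M}+\sum_{\ell=0}^{M-1}\nabla_xG_{\va,-\zeta}\star G_{\va,-\zeta}^{\star\ell}\star D_{\va,\zeta}\star G_{0,-\zeta}^{\star(M-1-\ell)}\Big],
\]
with $D_{\va,\zeta}=G_{\va,-\zeta}-G_{0,-\zeta}$. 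The two pieces are handled as follows.
\begin{itemize}
\item The sum is controlled by the Bessel-kernel convolution lemma and the zeroth-order bound $|D_{\va,\zeta}|\leq C\va|x-y|^{1-n}e^{-c|\zeta|^{1/2}|x-y|}$.
\item The first term is the first-order error for the pair $U_{\va,M}=G_{\va,-\zeta}\star G_{0,-\zeta}^{\star M}$ and $U_{0,M}=G_{0,-\zeta}^{\star(M+1)}$, which solve $(\cL_\va+\zeta)U_{\va,M}=(\cL_0+\zeta)U_{0,M}=G_{0,-\zeta}^{\star M}$. So your third step is applied to these smoothed functions, at scale $R\simeq|x-y|+|\zeta|^{-1/2}$ rather than $|x-y|$. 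The $C^{2,\rho}$ norms of $U_{0,M}$ are bounded through the source $G_{0,-\zeta}^{\star M}$.
\end{itemize}
This yields $|\pa_\zeta^ME_{\va,\zeta}|\leq C\va|\zeta|^{\f{n-2M}{2}}\log(2+(\va|\zeta|^{1/2})^{-1})e^{-c\op{Re}\sqrt\zeta|x-y|}$. This bound has no diagonal singularity and is integrable on $\mathcal C_\gamma$ once $2M>n+2$.

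\textbf{The Gaussian does come from the contour integral, so the Davies twist is unnecessary.} Your claim to the contrary is mistaken. The twist would also be costly: $e^{\psi}\cL_\va e^{-\psi}$ is non-self-adjoint with lower-order terms, and the Dirichlet-corrector approximation theory would have to be redone for it. On $\mathcal C_\gamma$ one has $\op{Re}\sqrt\zeta\geq\sqrt\gamma$, so after the integrations by parts the integrand is bounded by $e^{(t-s)\gamma-c|x-y|\sqrt\gamma}$ times an integrable factor. Choosing $\gamma=a|x-y|^2/(t-s)^2$ with $a$ small gives $e^{-\kappa|x-y|^2/(t-s)}$ when $|x-y|^2>t-s$. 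With a sector contour, place the arc at $|\lambda|\sim|x-y|^2/(t-s)^2$ instead.

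\textbf{The large-$|\lambda|$ regime is misplaced.} The regime $|\lambda|\geq\va^{-2}$ is not the same as $t-s\leq\va^2$: for every $t-s$ the contour passes through all large $|\lambda|$. This regime must therefore be handled at the resolvent level. The paper does this with the trivial bound, since $\va|\zeta|^{1/2}\geq c$ there supplies the factor $\va$.
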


\begin{rem}\label{rem-intro-parabolic-first}
Theorem \ref{intro-thm-parabolic-first} is the first-order counterpart of Theorem \ref{intro-thm-parabolic-zero}. Compared with the corresponding first-order estimate in the fully space-time periodic setting of \cite{Gen23}, the rate is improved from $O(\va^{\f{1}{2}-})$ to $O(\va)$, up to the logarithmic factor naturally associated with the first-order Dirichlet corrector estimate. This improvement relies on the time-independent structure of the coefficients.
\end{rem}

\subsection{Strategies in the proof}

We briefly describe the strategy of the proof. The main difficulty is to recover the sharp elliptic-in-space convergence rate after passing from the resolvent kernels to the parabolic Green's function. In the fully time-dependent setting of \cite{Gen23}, the boundary cut-off procedure near $\pa\om$ leads to a loss in the convergence order. In the present time-independent setting, we avoid this loss by starting from elliptic resolvent estimates and then passing to the parabolic problem through an inverse Laplace transform.

The first step is to refine the dependence on the resolvent parameter in the local elliptic estimates. We use the sector
\[
\Sigma_{\theta_0}
=
\left\{
\lambda\in\C\backslash\{0\}:\ |\arg\lambda|>\theta_0
\right\},
\quad
\theta_0\in\(0,\f{\pi}{2}\),
\]
which is separated from the positive real axis. For any $\lambda\in\Sigma_{\theta_0}$, we prove a Caccioppoli inequality of the form
\[
\left(\fint_{U_r(x_0)}
\left(|u_{\va,\lambda}|^2+r^2|\nabla u_{\va,\lambda}|^2\right)
\right)^{\f{1}{2}}
\leq
\f{C(Ck)^{2k}}{(1+|\lambda|r^2)^k}
\left(\fint_{U_{2r}(x_0)}|u_{\va,\lambda}|^2\right)^{\f{1}{2}}
+\cdots,
\]
where the dependence on the integer $k$ is kept explicit. This refines the corresponding estimate in \cite{Wan23} by tracking the growth in $k$. It then allows us to choose $k$ of the order of $|\lambda|^{\f{1}{2}}|x-y|$, which produces exponential off-diagonal decay for the resolvent Green's functions.

Moreover, we establish the zeroth-order and first-order resolvent convergence estimates
\[
\left|G_{\va,\lambda}(x,y)-G_{0,\lambda}(x,y)\right|
\leq
\f{C(Ck)^{2k}\va}
{(1+|\lambda||x-y|^2)^k |x-y|^{n-1}},
\]
and
\[
\left|
\f{\partial}{\partial x_i}
G_{\va,\lambda}^{\al\gamma}(x,y)
-
\f{\partial}{\partial x_i}
\Phi_{\va,j}^{\al\beta}(x)
\f{\partial}{\partial x_j}
G_{0,\lambda}^{\beta\gamma}(x,y)
\right|
\leq
\f{
C(Ck)^{2k}\va
\log(\va^{-1}|x-y|+2)
}{
(1+|\lambda||x-y|^2)^k |x-y|^n
}.
\]
Choosing $k$ in terms of $|\lambda|^{\f{1}{2}}|x-y|$ then yields exponential decay in the quantity $|\lambda|^{\f{1}{2}}|x-y|$. These resolvent estimates form the elliptic core of the paper.

The second step is to pass from the resolvent Green's functions to the parabolic Green's functions. Since when the coefficient matrix $ A $ satisfies \eqref{intro-symmetry} and \eqref{intro-ellipticity}, the Dirichlet realization of $\cL_\va$ is nonnegative and self-adjoint in $L^2(\om;\C^m)$, the semigroup generated by $-\cL_\va$ admits the inverse Laplace representation
\[
G_\va(x,t;y,s)
=
\f{1}{2\pi\op{i}}
\int_{\mathcal C_\gamma}
\exp\{(t-s)\zeta\}
G_{\va,-\zeta}(x,y)\ud\zeta,
\quad t>s,
\]
where $\mathcal C_\gamma=\{\gamma+\op{i}\eta:\eta\in\R\}$ and $\gamma>0$. Along this contour, $-\zeta$ lies in the resolvent sector after fixing $\theta_0<\f{\pi}{2}$. We then differentiate the resolvent kernels with respect to $\zeta$, integrate by parts along $\mathcal C_\gamma$, and optimize the parameter $\gamma$ according to the two regimes $|x-y|^2\leq t-s$ and $|x-y|^2>t-s$. This gives the Gaussian factor in Theorem \ref{intro-thm-parabolic-zero}.

The proof of Theorem \ref{intro-thm-parabolic-first} requires an additional argument. The main point is that the differentiated first-order resolvent error cannot be controlled by a direct pointwise estimate alone. For $\zeta$ such that $-\zeta\in\Sigma_{\theta_0}$, and for fixed $1\leq i\leq n$ and $\al,\ga\in\Z\cap[1,m]$, set
\[
E_{\va,\zeta}^{\al\gamma}(x,y)
=
\f{\pa}{\pa x_i}G_{\va,-\zeta}^{\al\gamma}(x,y)
-
\f{\pa\Phi_{\va,j}^{\al\beta}}{\pa x_i}(x)
\f{\pa}{\pa x_j}G_{0,-\zeta}^{\beta\gamma}(x,y).
\]
We first smooth the error by composing it with powers of the homogenized resolvent kernel and estimate
\[
E_{\va,\zeta}\star G_{0,-\zeta}^{\star M}.
\]
We then recover $\pa_\zeta^M E_{\va,\zeta}$ through a resolvent identity, where $\star$ denotes spatial kernel composition; see \eqref{stardefinition}. This step uses the local first-order approximation estimate for resolvent solutions, the Dirichlet corrector estimates, and the Bessel-type convolution bounds developed later in the paper. The logarithmic factor in Theorem \ref{intro-thm-parabolic-first} comes from the first-order corrector estimate.

\subsection{Organization of this paper}

The paper is organized as follows. In Section \ref{PreSection}, we collect preliminary estimates that are used throughout the paper. In Section \ref{Green-functions-section}, we recall the existence and pointwise bounds for the resolvent Green's functions of $\cL_\va-\lambda I$. In Section \ref{ConGreenResolvent}, we prove Theorem \ref{intro-thm-parabolic-zero} and \ref{intro-thm-parabolic-first}.

\subsection{Notations and conventions}

We first fix the notation used throughout the paper.

\begin{itemize}
\item Throughout the paper, $C$ denotes a positive constant. When necessary, we write $C(a,b,\ldots)$ to indicate its dependence on the parameters $a,b,\ldots$. The value of $C$ may change from line to line.

\item We adopt the Einstein summation convention: repeated indices are implicitly summed over their full range.

\item For a fixed number $\theta_0\in(0,\f{\pi}{2})$, we set
\[
\Sigma_{\theta_0}
:=
\left\{
\lambda\in\C\backslash\{0\}:\ |\arg\lambda|>\theta_0
\right\}.
\]
Thus, $\Sigma_{\theta_0}$ is separated from the positive real axis. This is the sector in which the resolvent estimates are used.

\item For $d\in\Z$ with $d\geq2$, we define
\[
B_r^d(x):=\{y\in\R^d:\ |y-x|<r\}.
\]
We denote the origin of $\R^d$ by $0^d$. When $d=n$, we suppress the superscript and write $B_r(x)$ instead of $B_r^n(x)$, and $B_r$ instead of $B_r^n(0)$.

\item If $U\subset\R^n$ is a domain, $x\in\ol U$ and $r>0$, we write
\[
U_r(x):=U\cap B_r(x),
\quad
T_r^U(x):=\partial U\cap B_r(x),
\quad\text{and}\quad
\ol U_r(x):=\ol U\cap\ol B_r(x).
\]
Note that if $r\in(0,\dist(x,\pa U))$, then $T_r^U(x)=\emptyset$. When $x=0$, we simply write $U_r$ and $T_r^U$.
\end{itemize}

\begin{defn}[Local description of $C^k$ and $C^{k,\alpha}$ domains]\label{DefnLocal}
Let $U\subset\R^n$ be a bounded domain and let $k\in\Z_+$. We say that $U$ is a bounded $C^k$ domain with parameters $\cC_k(U)=(r_0,M_0,\omega_k)$, where $r_0>0$, $M_0>0$, and $\omega_k:[0,+\infty)\to[0,+\infty)$ is continuous, non-decreasing, and satisfies
\[
\lim_{t\to0^+}\omega_k(t)=0,
\]
if the following condition holds.

For any $x_0\in\partial U$, after a translation and a rotation of coordinates, we may assume that $x_0=0$ and write points $y\in\R^n$ as $y=(y',y_n)\in\R^{n-1}\times\R$. In these coordinates, there exists a $C^k$ function $\psi:\R^{n-1}\to\R$ such that
\[
U\cap B_r
=
\{(y',y_n)\in\R^{n-1}\times\R:\ y_n>\psi(y')\}\cap B_r
\]
for any $0<r<60(M_0+1)r_0$. Moreover,
\[
\psi(0^{n-1})=0,
\quad
D\psi(0^{n-1})=0,
\quad\text{and}\quad
\sum_{j=1}^{k}\|D^j\psi\|_{L^\infty(\R^{n-1})}\leq M_0.
\]
Finally, for any $x',y'\in\R^{n-1}$,
\[
|D^k\psi(x')-D^k\psi(y')|
\leq
\omega_k(|x'-y'|).
\]

If, in addition, for some $\alpha\in(0,1]$ the modulus $\omega_k$ may be chosen so that
\[
\omega_k(t)\leq M_\alpha t^\alpha
\quad\text{for any }t\geq0,
\]
then we say that $U$ is a bounded $C^{k,\alpha}$ domain with parameters $\cC_{k,\alpha}(U)=(r_0,M_0,M_\alpha)$. Equivalently, in the above local coordinates,
\[
[D^k\psi]_{C^{0,\alpha}(\R^{n-1})}
\leq M_\alpha.
\]

In these local coordinates, we use the notation
\[
U_r(0)=U\cap B_r
=
\{(y',y_n): y_n>\psi(y')\}\cap B_r,
\]
\[
\ol U_r(0)=\ol U\cap\ol B_r
=
\{(y',y_n): y_n\geq\psi(y')\}\cap\ol B_r,
\]
and
\[
T_r^U(0)=\partial U\cap B_r
=
\{(y',y_n): y_n=\psi(y')\}\cap B_r.
\]
For a general boundary point $x_0\in\partial U$, the same notation is used after translating and rotating the coordinate system at $x_0$.
\end{defn}

\section{Preliminaries}\label{PreSection}

\subsection{Caccioppoli's inequality}

We begin with a localized Caccioppoli inequality for the resolvent equation. The point of the lemma is to keep the dependence on the iteration parameter $k$ explicit, since this dependence will later be used to obtain exponential off-diagonal decay for the resolvent kernels.

\begin{lem}[Caccioppoli's inequality]\label{Caccio-data}
Let $\va\in[0,1]$, $n\geq2$, and let $\lambda\in\Sigma_{\theta_0}\cup\{0\}$ with $\theta_0\in(0,\f{\pi}{2})$. Let $U\subset\R^n$ be a bounded $C^1$ domain with parameters $\cC_1(U)$. Assume that $A$ satisfies \eqref{intro-symmetry}--\eqref{intro-ellipticity}. Let $x_0\in\ol U$ and $r\in(0,\f{r_0}{2})$. Set $q=\f{2n}{n+2}$ if $n\geq3$; if $n=2$, let $q>1$ be fixed. Assume that
\[
F\in L^q(U_{2r}(x_0);\C^m),
\quad
f\in L^2(U_{2r}(x_0);\C^{m\times n}).
\]

When $T_{2r}^{U}(x_0)\neq\emptyset$, assume that $u_{\va,\lambda}\in H^1(U_{2r}(x_0);\C^m)$ is a weak solution of
\[
\left\{
\begin{aligned}
(\cL_\va-\lambda I)u_{\va,\lambda}
&=F+\op{div}(f)
&&\text{in }U_{2r}(x_0),
\\
u_{\va,\lambda}&=0
&&\text{on }T_{2r}^{U}(x_0).
\end{aligned}
\right.
\]
If $T_{2r}^{U}(x_0)=\emptyset$, assume that $B_{2r}(x_0)\subset U$ and $u_{\va,\lambda}\in H^1(B_{2r}(x_0);\C^m)$ is a weak solution of
\[
(\cL_\va-\lambda I)u_{\va,\lambda}=F+\op{div}(f)
\quad\text{in }B_{2r}(x_0).
\]
Then, for any $k\in\Z_+$,
\begin{align*}
&\left(\fint_{U_r(x_0)}
\left(|u_{\va,\lambda}|^2+r^2|\nabla u_{\va,\lambda}|^2\right)
\right)^{\f{1}{2}}
\\
&\quad\leq
\f{C(Ck)^{2k}}
{(1+|\lambda|r^2)^k}
\left(\fint_{U_{2r}(x_0)}|u_{\va,\lambda}|^2\right)^{\f{1}{2}}+
C(Ck)^{2k}
r\left\{
r\left(\fint_{U_{2r}(x_0)}|F|^q\right)^{\f{1}{q}}
+\left(\fint_{U_{2r}(x_0)}|f|^2\right)^{\f{1}{2}}
\right\},
\end{align*}
where, in the interior case, $U_\rho(x_0)$ is replaced by $B_\rho(x_0)$. The constant $C$ depends only on $\mu$, $n$, $m$, $\theta_0$, $\cC_1(U)$, and also on $q$ when $n=2$.
\end{lem}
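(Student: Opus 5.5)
The plan is to first prove a one-step Caccioppoli estimate with explicit dependence on the radii, and then iterate it $k$ times over a sequence of nested domains whose radii shrink by $r/(2k)$ at each step. Each step gains a factor $(1+|\lambda|\rho^2)^{-1/2}$ times $C k/r$ type losses.

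\textbf{Step 1 (basic energy estimate).} Let $r\le \rho_1<\rho_2\le 2r$ and take a cutoff $\eta$ with $\eta=1$ on $B_{\rho_1}(x_0)$, $\operatorname{supp}\eta\subset B_{\rho_2}(x_0)$ and $|\nabla\eta|\le C/(\rho_2-\rho_1)$. Test the equation with $\eta^2\bar u$; this is admissible since $u=0$ on $T^U_{2r}(x_0)$. This gives
\[
\int \eta^2 A\nabla u\cdot\nabla\bar u-\lambda\int\eta^2|u|^2
=-\int 2\eta\,(A\nabla u)\cdot(\nabla\eta)\bar u+\int F\eta^2\bar u-\int f\cdot\nabla(\eta^2\bar u).
\]
By symmetry, $A\nabla u\cdot\nabla \bar u$ is real and nonnegative. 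Since $\lambda\in\Sigma_{\theta_0}$, we can multiply by $e^{-i\phi}$ for a suitable angle $\phi$ and take real parts, using the elementary fact that $a-\lambda b$ with $a,b\ge0$ satisfies $|a-\lambda b|\ge c(\theta_0)(a+|\lambda| b)$. This yields
\[
\int_{U_{\rho_1}}|\nabla u|^2+|\lambda||u|^2\le \frac{C}{(\rho_2-\rho_1)^2}\int_{U_{\rho_2}}|u|^2+C\Big(\int|F|^q\Big)^{2/q}+C\int|f|^2 .
\]
The $F$ term is handled by Sobolev (or, when $n=2$, by the $H^1_0\subset L^{q'}$ embedding on the localized function) together with Young's inequality. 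In averaged form, with $\rho_2-\rho_1=r/k$, this reads
\[
\Big(\fint_{U_{\rho_1}}|u|^2\Big)^{1/2}\le \frac{Ck}{(1+|\lambda|r^2)^{1/2}}\Big(\fint_{U_{\rho_2}}|u|^2\Big)^{1/2}+\text{data},
\]
because $|\lambda|^{-1}(k/r)^2\lesssim k^2/(|\lambda| r^2)$. When $|\lambda|r^2\le1$ the bound is trivial.

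\textbf{Step 2 (iteration).} Set $\rho_j=2r-jr/(2k)$ for $j=0,\dots,2k$ and apply Step 1 a total of $2k$ times, each time losing a factor $Ck(1+|\lambda|r^2)^{-1/2}$. This gives the bound $(Ck)^{2k}(1+|\lambda|r^2)^{-k}$ on the $L^2$ norm of $u$ over $U_r$. One final application of the gradient part of Step 1 controls $r^2|\nabla u|^2$ with the same constant. The data terms accumulate as a geometric-type sum $\sum_j (Ck)^{j}\cdot(\text{data})$, which is bounded by $C(Ck)^{2k}r\{r\|F\|_{q,\text{avg}}+\|f\|_{2,\text{avg}}\}$ after tracking the scaling $r^{n}$ of the averages. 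Each step is carried out on $U_{\rho_j}$, which stays inside $U_{2r}$. The $C^1$ parameters ensure that the Sobolev constants on these localized sets are uniform. The interior case is identical with $B_\rho$ in place of $U_\rho$.

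\textbf{Main obstacle.} The delicate point is the sectorial coercivity step. One must extract both $|\lambda|\int\eta^2|u|^2$ and $\int\eta^2|\nabla u|^2$ with constants depending only on $\theta_0$ and $\mu$, while the cross term $\eta A\nabla u\cdot\nabla\eta\,\bar u$ is absorbed without destroying the $|\lambda|$ gain. For this I would bound the cross term by $\delta\int\eta^2|\nabla u|^2+C_\delta(\rho_2-\rho_1)^{-2}\int|u|^2$ before dividing by $(1+|\lambda|r^2)$. A secondary issue is keeping the data constants at most $(Ck)^{2k}$ rather than letting them grow worse. This works because each data contribution is multiplied only by the product of the preceding factors, each of which is at most $Ck$.
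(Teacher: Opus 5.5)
Your proposal is correct and follows essentially the same route as the paper. It derives a one-step Caccioppoli estimate from sectorial coercivity (the paper cites the real/imaginary-part argument of \cite[Lemma 2.4]{Wan23}) and iterates it over about $2k$ annuli of width about $r/k$, so that each step trades a factor $Ck$ for $(1+|\lambda|r^2)^{-1/2}$, with the data terms summed as you describe.

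There is one bookkeeping point. Your radii $\rho_j=2r-jr/(2k)$ use up all $2k$ steps to reach $U_r$, so the final gradient bound needs one more annulus; otherwise you lose half a power of $1+|\lambda|r^2$. The paper handles this by taking $N=2(k+1)$ steps. Equivalently, you can run your scheme with $k+1$ in place of $k$ and absorb $(C(k+1))^{2k+2}$ into $(Ck)^{2k}$.
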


\begin{proof}
We prove only the boundary case; the interior case is identical, with $U_\rho(x_0)$ replaced by $B_\rho(x_0)$. We first assume that $|\lambda|>0$; the case $\lambda=0$ follows from the same argument after dropping the term involving $|\lambda|$. Up to translation, we let $x_0=0$.

For notational convenience, set
\be
A_\va(x)=
\begin{cases}
A(\f{x}{\va}),&\va\in(0,1],\\
\widehat A,&\va=0.
\end{cases}\label{Avax}
\ee
This convention allows the proof to cover both the oscillatory and homogenized operators. Let
\[
N=2(k+1),
\quad
\delta=\f{r}{N},
\quad\text{and}\quad
r_j=r+j\delta,
\quad j\in\Z\cap[0,N].
\]
Thus, $r_0=r$, $r_N=2r$, and $r_{j+1}-r_j=\delta$. For each $j=0,\ldots,N-1$, choose $\eta_j\in C_0^1(B_{r_{j+1}})$ such that
\[
0\leq \eta_j\leq1,
\quad
\eta_j\equiv1\text{ in }B_{r_j},
\quad\text{and}\quad
|\nabla\eta_j|\leq C(n)\delta^{-1}.
\]
Since $u_{\va,\lambda}=0$ on $T_{2r}^{U}$, the function $\eta_j^2u_{\va,\lambda}$ is an admissible test function in $U_{r_{j+1}}$.

We first prove a one-step estimate. Let $r\leq s<t\leq2r$, let $\ell=t-s$, and let $\eta\in C_0^\infty(B_t)$ satisfy
\[
0\leq\eta\leq1,
\quad
\eta\equiv1\text{ in }B_s,
\quad\text{and}\quad
|\nabla\eta|\leq C(n)\ell^{-1}.
\]
Testing the equation by $\eta^2u_{\va,\lambda}$ gives
\[
\begin{aligned}
&\int_{U_t}\eta^2\<A_\va\nabla u_{\va,\lambda},\overline{\nabla u_{\va,\lambda}}\>
-\lambda\int_{U_t}\eta^2|u_{\va,\lambda}|^2
\\
&\quad =
-2\int_{U_t}\eta\<A_\va\nabla u_{\va,\lambda},
\overline{u_{\va,\lambda}\nabla\eta}\>
+\int_{U_t}F\overline{\eta^2u_{\va,\lambda}}
-\int_{U_t}\<f,\overline{\nabla(\eta^2u_{\va,\lambda})}\>.
\end{aligned}
\]
Since $\lambda\in\Sigma_{\theta_0}$, the ellipticity of $A_\va$ and the standard real-imaginary part argument as in the proof of \cite[Lemma 2.4]{Wan23} imply
\be
\begin{aligned}
&\int_{U_t}\eta^2|\nabla u_{\va,\lambda}|^2
+|\lambda|\int_{U_t}\eta^2|u_{\va,\lambda}|^2
\\
&\quad\leq
C\int_{U_t}\eta|\nabla u_{\va,\lambda}|
|\nabla\eta||u_{\va,\lambda}|
+C\left|\int_{U_t}F\overline{\eta^2u_{\va,\lambda}}\right|
+C\left|\int_{U_t}\<f,\overline{\nabla(\eta^2u_{\va,\lambda})}\>\right|.
\end{aligned}
\label{eta2nablauvalda}
\ee
The first term is bounded by Cauchy's inequality:
\be
\int_{U_t}\eta|\nabla u_{\va,\lambda}|
|\nabla\eta||u_{\va,\lambda}|
\leq
\gamma\int_{U_t}\eta^2|\nabla u_{\va,\lambda}|^2
+
\f{C}{\gamma\ell^2}\int_{U_t}|u_{\va,\lambda}|^2 .
\label{etanablauvalda}
\ee
For the $F$-term in \eqref{eta2nablauvalda}, the scale-invariant Sobolev--Poincar\'e inequality for $\eta u_{\va,\lambda}$ and Cauchy's inequality imply that, for any $\gamma\in(0,1)$,
\be
\begin{aligned}
\left|\int_{U_t}F\overline{\eta^2u_{\va,\lambda}}\right|
&\leq
\|F\|_{L^q(U_t)}\|\eta u_{\va,\lambda}\|_{L^{q'}(U_t)}
\\
&\leq
\gamma\int_{U_t}\eta^2|\nabla u_{\va,\lambda}|^2
+\f{C}{\gamma\ell^2}\int_{U_t}|u_{\va,\lambda}|^2
+
\f{C}{\gamma}\cL^n(U_t)t^2
\left(\fint_{U_t}|F|^q\right)^{\f{2}{q}},
\end{aligned}
\label{Fterm}
\ee
where $q'=\f{2n}{n-2}$ if $n\geq3$, while $q'=\f{q}{q-1}$ if $n=2$. In the two-dimensional case, the constant also depends on the fixed choice of $q>1$. Similarly, for any $\gamma\in(0,1)$,
\be
\begin{aligned}
\left|\int_{U_t}f\cdot
\overline{\nabla(\eta^2u_{\va,\lambda})}\right|
&\leq
C\int_{U_t}\eta |f||\nabla u_{\va,\lambda}|
+
C\int_{U_t}|f||\nabla\eta||u_{\va,\lambda}|
\\
&\leq
\gamma\int_{U_t}\eta^2|\nabla u_{\va,\lambda}|^2
+\f{C}{\gamma\ell^2}\int_{U_t}|u_{\va,\lambda}|^2
+\f{C}{\gamma}\int_{U_t}|f|^2 .
\end{aligned}
\label{fterm}
\ee
Combining \eqref{eta2nablauvalda}--\eqref{fterm}, choosing $\gamma>0$ sufficiently small, and absorbing the gradient terms, we obtain
\be
\begin{aligned}
&\int_{U_s}|\nabla u_{\va,\lambda}|^2
+|\lambda|\int_{U_s}|u_{\va,\lambda}|^2
\\
&\quad\quad\leq
\f{C}{\ell^2}
\int_{U_t}|u_{\va,\lambda}|^2
+
C\cL^n(U_t)t^2
\left(\fint_{U_t}|F|^q\right)^{\f{2}{q}}
+
C\cL^n(U_t)
\fint_{U_t}|f|^2 .
\end{aligned}
\label{onestepst}
\ee

Applying \eqref{onestepst} with $s=r_j$, $t=r_{j+1}$, and $\ell=\delta$, and using $r\leq r_j<r_{j+1}\leq2r$, we get
\be
\begin{aligned}
&\left(\fint_{U_{r_j}}|\nabla u_{\va,\lambda}|^2\right)^{\f{1}{2}}
+
|\lambda|^{\f{1}{2}}
\left(\fint_{U_{r_j}}|u_{\va,\lambda}|^2\right)^{\f{1}{2}}
\\
&\quad\quad\leq
C\left\{
\f{1}{\delta}
\left(\fint_{U_{r_{j+1}}}|u_{\va,\lambda}|^2\right)^{\f{1}{2}}
+
r\left(\fint_{U_{2r}}|F|^q\right)^{\f{1}{q}}
+
\left(\fint_{U_{2r}}|f|^2\right)^{\f{1}{2}}
\right\}.
\end{aligned}
\label{lda12}
\ee
Set
\[
V_j=
\left(\fint_{U_{r_j}}|u_{\va,\lambda}|^2\right)^{\f{1}{2}},
\quad
G=
r\left(\fint_{U_{2r}}|F|^q\right)^{\f{1}{q}}
+
\left(\fint_{U_{2r}}|f|^2\right)^{\f{1}{2}}.
\]
Then \eqref{lda12} gives
\[
|\lambda|^{\f{1}{2}}V_j
\leq
C\left(\f{1}{\delta} V_{j+1}+G\right).
\]
Together with the trivial estimate $V_j\leq CV_{j+1}$, this yields
\[
V_j
\leq
\f{CV_{j+1}}{(1+|\lambda|\delta^2)^{\f{1}{2}}}
+
C\delta G .
\]
Iterating from $j=0$ to $j=N-1$, we obtain
\[
V_0
\leq
\left(\f{C}{(1+|\lambda|\delta^2)^{\f{1}{2}}}\right)^N
V_N
+
C\delta G\left\{
\sum_{j=0}^{N-1}
\left(\f{C}{(1+|\lambda|\delta^2)^{\f{1}{2}}}\right)^j\right\}.
\]
Since $N=2(k+1)$ and $\delta=\f{r}{N}$,
\[
\left(\f{C}{(1+|\lambda|\delta^2)^{\f{1}{2}}}\right)^N
=
\f{C^N}
{\left(1+\f{|\lambda|r^2}{N^2}\right)^{k+1}}.
\]
Using
\[
1+\f{|\lambda|r^2}{N^2}
\geq
\f{1+|\lambda|r^2}{N^2},
\]
we obtain
\[
\left(\f{C}{(1+|\lambda|\delta^2)^{\f{1}{2}}}\right)^N
\leq
\f{C(C(k+1))^{2k+2}}
{(1+|\lambda|r^2)^k}.
\]
Moreover,
\[
\delta
\sum_{j=0}^{N-1}
\left(\f{C}{(1+|\lambda|\delta^2)^{\f{1}{2}}}\right)^j
\leq
C(C(k+1))^{2k+2}r .
\]
Therefore,
\[
\begin{aligned}
V_0
&\leq
\f{C(C(k+1))^{2k+2}}
{(1+|\lambda|r^2)^k}
\left(\fint_{U_{2r}}|u_{\va,\lambda}|^2\right)^{\f{1}{2}}
\\
&\quad\quad+
C(C(k+1))^{2k+2}r
\left\{
r\left(\fint_{U_{2r}}|F|^q\right)^{\f{1}{q}}
+
\left(\fint_{U_{2r}}|f|^2\right)^{\f{1}{2}}
\right\}.
\end{aligned}
\]

It remains to estimate the gradient. Applying the one-step estimate \eqref{onestepst} with $s=r_0$ and $t=r_1$, we have
\[
\left(\fint_{U_r}|\nabla u_{\va,\lambda}|^2\right)^{\f{1}{2}}
\leq
C\left(\f{1}{\delta} V_1+G\right).
\]
Multiplying by $r$ and using $\f{r}{\delta}=N=2(k+1)$ gives
\[
r\left(\fint_{U_r}|\nabla u_{\va,\lambda}|^2\right)^{\f{1}{2}}
\leq
C(k+1)V_1+CrG.
\]
Iterating the estimate for $V_j$ from $j=1$ to $j=N-1$ yields
\[
V_1
\leq
\left(\f{C}{(1+|\lambda|\delta^2)^{\f{1}{2}}}\right)^{N-1}
V_N
+
C\delta G
\left\{\sum_{j=0}^{N-2}
\left(\f{C}{(1+|\lambda|\delta^2)^{\f{1}{2}}}\right)^j\right\}.
\]
As before,
\[
(k+1)
\left(\f{C}{(1+|\lambda|\delta^2)^{\f{1}{2}}}\right)^{N-1}
\leq
\f{C(C(k+1))^{2k+2}}
{(1+|\lambda|r^2)^k},
\]
and
\[
(k+1)\delta\left\{
\sum_{j=0}^{N-2}
\left(\f{C}{(1+|\lambda|\delta^2)^{\f{1}{2}}}\right)^j\right\}
\leq
C(C(k+1))^{2k+2}r .
\]
Consequently,
\[
\begin{aligned}
r\left(\fint_{U_r}|\nabla u_{\va,\lambda}|^2\right)^{\f{1}{2}}
&\leq
\f{C(C(k+1))^{2k+2}}
{(1+|\lambda|r^2)^k}
\left(\fint_{U_{2r}}|u_{\va,\lambda}|^2\right)^{\f{1}{2}}
\\
&\quad\quad+
C(C(k+1))^{2k+2}
r\left\{
r\left(\fint_{U_{2r}}|F|^q\right)^{\f{1}{q}}
+
\left(\fint_{U_{2r}}|f|^2\right)^{\f{1}{2}}
\right\}.
\end{aligned}
\]
Combining this with the estimate for $V_0$ gives
\[
\begin{aligned}
&\left(\fint_{U_r}
\left(|u_{\va,\lambda}|^2+r^2|\nabla u_{\va,\lambda}|^2\right)
\right)^{\f{1}{2}}
\\
&\quad\leq
\f{C(C(k+1))^{2k+2}}
{(1+|\lambda|r^2)^k}
\left(\fint_{U_{2r}}|u_{\va,\lambda}|^2\right)^{\f{1}{2}}+
C(C(k+1))^{2k+2}
r\left\{
r\left(\fint_{U_{2r}}|F|^q\right)^{\f{1}{q}}
+
\left(\fint_{U_{2r}}|f|^2\right)^{\f{1}{2}}
\right\}.
\end{aligned}
\]
Since $k\in\Z_+$, the factor $(C(k+1))^{2k+2}$ may be absorbed into $(Ck)^{2k}$ after increasing $C$. Indeed,
\[
(C_0(k+1))^{2k+2}
=
\left(1+\f{1}{k}\right)^{2k+2}C_0^{2k+2}k^{2k+2}
\leq
(Ck)^{2k},
\]
because $\left(1+\f{1}{k}\right)^{2k+2}$ is uniformly bounded and $k^2\leq C^k$ for $k\in\Z_+$. This proves the desired estimate.
\end{proof}
\subsection{Uniform regularity}

We next record the localized regularity estimates that will be used to convert the preceding energy estimate into pointwise bounds. The first result gives H\"older and $L^\infty$ bounds with explicit dependence on the parameter $k$.

\begin{prop}[Localized H\"older and $L^\infty$ estimates]\label{Holder-Linfty-data}
Let $\va\in[0,1]$, $n\geq2$, and let $\lambda\in\Sigma_{\theta_0}\cup\{0\}$ with $\theta_0\in(0,\f{\pi}{2})$. Let $U\subset\R^n$ be a bounded $C^1$ domain with parameters $\cC_1(U)$. Assume that $A$ satisfies \eqref{intro-symmetry}--\eqref{intro-holder}. Let $p\in(n,+\ift)$, $q=\f{np}{n+p}$, and $\sigma=1-\f np$. Let $x_0\in\ol U$, $r\in(0,\f{r_0}{4})$, and suppose that
\[
(\cL_\va-\lambda I)u_{\va,\lambda}
=
F+\op{div}(f)
\]
holds either in $U_{2r}(x_0)$ with $u_{\va,\lambda}=0$ on $T_{2r}^{U}(x_0)$, or in $B_{2r}(x_0)$ when $B_{2r}(x_0)\subset U$. Assume that
\[
F\in L^q(U_{2r}(x_0);\C^m),
\quad
f\in L^p(U_{2r}(x_0);\C^{m\times n}).
\]
Then there exists an integer $N_0=N_0(n)\in\Z_+$ such that, for any $ k\in\Z_+$ and any $s\in(0,+\ift)$,
\be
\begin{aligned}
\left[u_{\va,\lambda}\right]_{C^{0,\sigma}(U_r(x_0))}
&\leq
\f{C(Ck)^{2k}}
{(1+|\lambda|r^2)^k r^\sigma}
\left(\fint_{U_{2r}(x_0)}|u_{\va,\lambda}|^2\right)^{\f{1}{2}}
\\
&\quad+
C(Ck)^{2k}
(1+|\lambda|r^2)^{N_0} r^{1-\sigma}
\left\{
r\left(\fint_{U_{2r}(x_0)}|F|^q\right)^{\f{1}{q}}
+
\left(\fint_{U_{2r}(x_0)}|f|^p\right)^{\f{1}{p}}
\right\},
\end{aligned}
\label{HolderEstimate}
\ee
and
\be
\begin{aligned}
\|u_{\va,\lambda}\|_{L^\infty(U_r(x_0))}
&\leq
\f{C(Ck)^{2k}}
{(1+|\lambda|r^2)^k}
\left(\fint_{U_{2r}(x_0)}|u_{\va,\lambda}|^s\right)^{\f{1}{s}}
\\
&\quad+
C(Ck)^{2k}(1+|\lambda|r^2)^{N_0}
\left\{
r^2\left(\fint_{U_{2r}(x_0)}|F|^q\right)^{\f{1}{q}}
+
r\left(\fint_{U_{2r}(x_0)}|f|^p\right)^{\f{1}{p}}
\right\}.
\end{aligned}
\label{Linftyestimate}
\ee
Here, in the interior case, $U_\rho(x_0)$ is replaced by $B_\rho(x_0)$. The constant $C$ depends only on $\mu$, $n$, $m$, $\theta_0$, $p$, $s$, $\tau$, $\nu$, and $\cC_1(U)$.
\end{prop}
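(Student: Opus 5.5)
The plan is to combine the $k$-explicit Caccioppoli inequality (Lemma \ref{Caccio-data}) with a uniform (in $\va$) H\"older estimate for the \emph{static} operator $\cL_\va$ with $\lambda=0$. The resolvent term $\lambda u_{\va,\lambda}$ is moved to the right-hand side and treated as part of $F$. Concretely, write
\[
\cL_\va u_{\va,\lambda}=(F+\lambda u_{\va,\lambda})+\op{div}(f)
\]
in $U_{2r}(x_0)$, with $u_{\va,\lambda}=0$ on $T^U_{2r}(x_0)$. We then invoke the classical Avellaneda--Lin type boundary and interior H\"older estimates for $\cL_\va$ in $C^1$ domains (uniform in $\va\in[0,1]$, valid under \eqref{intro-symmetry}--\eqref{intro-holder}; see \cite{AL87,She18}):
\[
[u]_{C^{0,\sigma}(U_{\rho}(z))}\le C\rho^{-\sigma}\Big(\fint_{U_{2\rho}(z)}|u|^2\Big)^{1/2}+C\rho^{1-\sigma}\Big\{\rho\Big(\fint_{U_{2\rho}(z)}|\tilde F|^q\Big)^{1/q}+\Big(\fint_{U_{2\rho}(z)}|f|^p\Big)^{1/p}\Big\}.
\]
Applied naively with $\tilde F=F+\lambda u$, this produces a term $|\lambda| r^2(\fint|u|^q)^{1/q}$. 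That term must be bounded using the decay already available in $L^2$.

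The key steps, in order, are as follows.

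\emph{Step 1.} Apply Lemma \ref{Caccio-data} on the pair $U_{3r/2}\subset U_{2r}$ (after a harmless rescaling of radii via a covering argument). This gives an $L^2$ bound on $U_{3r/2}$ of the form $(1+|\lambda|r^2)^{-k}(Ck)^{2k}\|u\|_{L^2(U_{2r})}$ plus data terms.

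\emph{Step 2.} Bootstrap integrability of $u$ from $L^2$ to $L^\infty$ by a finite number $N_0(n)$ of iterations, on nested domains between $U_r$ and $U_{3r/2}$. At each iteration we use the static $W^{1,p}$/H\"older estimate, with the term $\lambda u$ in $L^{q_j}$ placed into the source and Sobolev embedding raising $q_j$. Each step costs one factor $(1+|\lambda|r^2)$. After $N_0$ steps we obtain the H\"older seminorm on $U_r$ bounded by
\[
C(1+|\lambda|r^2)^{N_0}\Big[r^{-\sigma}\Big(\fint_{U_{3r/2}}|u|^2\Big)^{1/2}+\text{data}\Big].
\]

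\emph{Step 3.} Absorb the polynomial loss. Apply Step 1 with $k+N_0$ in place of $k$; since $(C(k+N_0))^{2(k+N_0)}\le (C'k)^{2k}$ after enlarging $C$, the factor $(1+|\lambda|r^2)^{N_0}$ is cancelled on the homogeneous term. The data terms retain the factor $(1+|\lambda|r^2)^{N_0}$, which is exactly what \eqref{HolderEstimate} allows.

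\emph{Step 4.} Derive the $L^\infty$ estimate \eqref{Linftyestimate} for $s\ge2$. Bound $|u(x)|$ by the average of $u$ over a ball of radius comparable to $r$ plus $r^\sigma[u]_{C^{0,\sigma}}$, and use H\"older's inequality.

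\emph{Step 5.} Handle $s<2$ by the standard convexity and iteration argument. Interpolate $\|u\|_{L^2}\le\|u\|_{L^\infty}^{1-s/2}\|u\|_{L^s}^{s/2}$ on nested domains $U_\rho$, $r\le\rho<2r$, apply Young's inequality, and use the usual absorption lemma for functions of $\rho$. The $k$-dependence survives because the decay factor enters multiplicatively: replacing $k$ by $\lceil 2k/s\rceil$ in the $L^2$ version keeps the constant of the form $(Ck)^{2k}$.

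The main obstacle is Step 2. The bootstrap must not destroy the $(1+|\lambda|r^2)^{-k}$ decay, and the number of iterations, hence the power $N_0$, must depend only on $n$. To arrange this, I would carry out all iterations on the fixed slab $U_{3r/2}$ and treat $\lambda u$ as a source term. Every occurrence of $\|u\|_{L^2(U_{3r/2})}$ is then the same quantity already decayed in Step 1, so the losses are purely polynomial in $|\lambda|r^2$ and can be absorbed in Step 3.
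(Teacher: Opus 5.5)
Steps 1--4 of your proposal work, but they get the $\lambda$-dependent regularity by a different route than the paper. The paper does no bootstrap. It quotes the resolvent estimate \cite[Corollary 3.3]{Wan23}, whose homogeneous term already has a $\lambda$-independent constant, with only the data terms carrying $(1+|\lambda|r^2)^{N_0}$. It then inserts the decay by Lemma \ref{Caccio-data} on $U_{3r/2}(x_0)\subset U_{2r}(x_0)$. You instead rebuild such an estimate from the static ($\lambda=0$) uniform theory by moving $\lambda u_{\va,\lambda}$ into the source. This also puts a factor $(1+|\lambda|r^2)^{N_0}$ on the homogeneous term, which you correctly cancel by running the Caccioppoli iteration with $k+N_0$. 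Your version is self-contained apart from the Avellaneda--Lin/Shen estimates; the paper's is a short reduction to its earlier work.

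Step 5, however, does not work as described. The absorption lemma needs a bound $\|u\|_{L^\infty(U_\rho)}\le K(\rho'-\rho)^{-n/2}\|u\|_{L^2(U_{\rho'})}+\cdots$ for all $r\le\rho<\rho'\le 2r$, with $K$ independent of $\rho,\rho'$. Localizing at the gap $\delta=\rho'-\rho$ only gives the decay $(1+|\lambda|\delta^2)^{-k}$, and $\delta\to0$ along the iteration. So you cannot take $K=C(Ck)^{2k}(1+|\lambda|r^2)^{-k}$. This is a genuine obstruction, not bookkeeping: for $-\Delta$ with $\lambda=-\Lambda<0$, the solution $e^{\Lambda^{1/2}x_1}$ shows that across a shell of width $\lesssim\Lambda^{-1/2}$ one cannot gain $(1+\Lambda r^2)^{-k}$ for large $k$. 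Replacing $k$ by $\lceil 2k/s\rceil$ does not address this.

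The remedy, which is exactly the paper's last step, is to separate convexity from decay. Run the convexity argument only for your $s=2$ bound with $k=1$ at scale $\delta$. Its homogeneous constant $C(1+|\lambda|\delta^2)^{-1}\le C$ is uniform in $\lambda$. Its data terms behave like $(1+|\lambda|\delta^2)^{N_0}\delta^{\sigma}\big(\|F\|_{L^q}+\|f\|_{L^p}\big)$ and are nondecreasing in $\delta$. This yields
\[
\|u_{\va,\lambda}\|_{L^\infty(U_{7r/4}(x_0))}\le C\Big(\fint_{U_{2r}(x_0)}|u_{\va,\lambda}|^s\Big)^{\frac1s}+C(1+|\lambda|r^2)^{N_0}\Big\{r^2\Big(\fint_{U_{2r}(x_0)}|F|^q\Big)^{\frac1q}+r\Big(\fint_{U_{2r}(x_0)}|f|^p\Big)^{\frac1p}\Big\}.
\]
Then apply your decayed $s=2$ estimate on $U_r(x_0)\subset U_{7r/4}(x_0)$, and bound $\big(\fint_{U_{7r/4}(x_0)}|u_{\va,\lambda}|^2\big)^{1/2}$ by $\|u_{\va,\lambda}\|_{L^\infty(U_{7r/4}(x_0))}$. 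This gives \eqref{Linftyestimate} for every $s\in(0,\infty)$.
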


\begin{proof}
We prove the boundary case; the interior case is identical, with $U_\rho(x_0)$ replaced by $B_\rho(x_0)$. By \cite[Corollary 3.3]{Wan23}, applied on the pair of nested domains $U_r(x_0)\subset U_{\f{3r}{2}}(x_0)$, we have
\[
\begin{aligned}
\left[u_{\va,\lambda}\right]_{C^{0,\sigma}(U_r(x_0))}
&\leq
\f{C}
{r^\sigma}
\left(\fint_{U_{\f{3r}{2}}(x_0)}|u_{\va,\lambda}|^2\right)^{\f{1}{2}}
\\
&\quad+
C(1+|\lambda|r^2)^{N_0} r^{1-\sigma}
\left\{
r\left(\fint_{U_{\f{3r}{2}}(x_0)}|F|^q\right)^{\f{1}{q}}
+
\left(\fint_{U_{\f{3r}{2}}(x_0)}|f|^p\right)^{\f{1}{p}}
\right\},
\end{aligned}
\]
where $N_0=N_0(n)>0$. Applying Lemma \ref{Caccio-data} on the pair of radii $\f{3r}{2}<2r$, and using $p\in(n,+\ift)\geq2$ to control the local $L^2$ average of $f$ by its local $L^p$ average, gives \eqref{HolderEstimate}.

We next prove \eqref{Linftyestimate}. For any $x\in U_r(x_0)$, the elementary estimate by the average and the H\"older seminorm gives
\[
|u_{\va,\lambda}(x)|
\leq
\left(\fint_{U_{\f{3r}{2}}(x_0)}|u_{\va,\lambda}|^2\right)^{\f12}
+
Cr^\sigma
[u_{\va,\lambda}]_{C^{0,\sigma}(U_{\f{3r}{2}}(x_0))}.
\]
Using Lemma \ref{Caccio-data} and \eqref{HolderEstimate} on the pair of radii $\f{3r}{2}<\f{7r}{4}$, we obtain
\be
\begin{aligned}
|u_{\va,\lambda}(x)|
&\leq
\f{C(Ck)^{2k}}
{(1+|\lambda|r^2)^k}
\left(\fint_{U_{\f{7r}{4}}(x_0)}|u_{\va,\lambda}|^2\right)^{\f{1}{2}}
\\
&\quad+
C(Ck)^{2k}
(1+|\lambda|r^2)^{N_0} r
\left\{
r\left(\fint_{U_{\f{7r}{4}}(x_0)}|F|^q\right)^{\f{1}{q}}
+
\left(\fint_{U_{\f{7r}{4}}(x_0)}|f|^p\right)^{\f{1}{p}}
\right\}.
\end{aligned}
\label{uvaldax}
\ee
It remains to replace the intermediate $L^2$ average by the desired $L^s$ average. By \cite[Corollary 3.3]{Wan23}, applied on the pair $U_{\f{7r}{4}}(x_0)\subset U_{2r}(x_0)$, we have
\[
\begin{aligned}
\left(\fint_{U_{\f{7r}{4}}(x_0)}|u_{\va,\lambda}|^2\right)^{\f{1}{2}}
&\leq
C\|u_{\va,\lambda}\|_{L^{\ift}(U_{\f{7r}{4}}(x_0))}\leq
C\left(\fint_{U_{2r}(x_0)}|u_{\va,\lambda}|^s\right)^{\f{1}{s}}\\
&\quad\quad+
C(1+|\lda|r^2)^{N_0}\left\{
r^2\left(\fint_{U_{2r}(x_0)}|F|^q\right)^{\f{1}{q}}
+
r\left(\fint_{U_{2r}(x_0)}|f|^p\right)^{\f{1}{p}}
\right\}.
\end{aligned}
\]
Combining this estimate with \eqref{uvaldax} gives \eqref{Linftyestimate}.
\end{proof}

The next proposition is the localized Lipschitz estimate for the resolvent equation. It will be used later to obtain pointwise bounds for gradients of resolvent Green's functions.

\begin{prop}[Localized Lipschitz estimate for $\cL_\va-\lambda I$]\label{Lip-lambda}
Let $\va\in[0,1]$, $n\geq2$, and let $\lambda\in\Sigma_{\theta_0}\cup\{0\}$ with $\theta_0\in(0,\f{\pi}{2})$. Let $U\subset\R^n$ be a bounded $C^{1,\eta}$ domain with parameters $\cC_{1,\eta}(U)$, where $\eta\in(0,1)$. Assume that $A$ satisfies \eqref{intro-symmetry}--\eqref{intro-holder}. Let $p\in(n,+\ift)$, $x_0\in\ol U$, and $r\in(0,\f{r_0}{4})$.

When $T_{2r}^{U}(x_0)\neq\emptyset$, assume that $u_{\va,\lambda}\in H^1(U_{2r}(x_0);\C^m)$ is a weak solution of
\[
\left\{
\begin{aligned}
(\cL_\va-\lambda I)u_{\va,\lambda}
&=F
&&\text{in }U_{2r}(x_0),
\\
u_{\va,\lambda}&=0
&&\text{on }T_{2r}^{U}(x_0).
\end{aligned}
\right.
\]
If $T_{2r}^{U}(x_0)=\emptyset$, assume that $B_{2r}(x_0)\subset U$ and $u_{\va,\lambda}\in H^1(B_{2r}(x_0);\C^m)$ is a weak solution of
\[
(\cL_\va-\lambda I)u_{\va,\lambda}=F
\quad\text{in }B_{2r}(x_0).
\]
Assume that $F\in L^p(U_{2r}(x_0);\C^m)$ in the boundary case and $F\in L^p(B_{2r}(x_0);\C^m)$ in the interior case. Then there exists an integer $N_0=N_0(n)\in\Z_+$ such that, for any $ k\in\Z_+$,
\[
\begin{aligned}
\|\nabla u_{\va,\lambda}\|_{L^\infty(U_r(x_0))}
&\leq
\f{C(Ck)^{2k}}
{(1+|\lambda|r^2)^k r}
\left(\fint_{U_{2r}(x_0)}|u_{\va,\lambda}|^2\right)^{\f{1}{2}}
\\
&\quad\quad+
C(Ck)^{2k}
(1+|\lambda|r^2)^{N_0}
r\left(\fint_{U_{2r}(x_0)}|F|^p\right)^{\f{1}{p}},
\end{aligned}
\]
where, in the interior case, $U_\rho(x_0)$ is replaced by $B_\rho(x_0)$. The constant $C>0$ depends only on $\mu$, $n$, $m$, $\theta_0$, $p$, $\tau$, $\nu$, $\eta$, and $\cC_{1,\eta}(U)$.
\end{prop}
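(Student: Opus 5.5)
The proof will mirror that of Proposition \ref{Holder-Linfty-data}: combine a uniform Lipschitz estimate without decay from \cite{Wan23} with the $k$-explicit Caccioppoli inequality, Lemma \ref{Caccio-data}. We treat only the boundary case and, after translation, take $x_0=0$; the interior case is the same with $U_\rho$ replaced by $B_\rho$.

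\emph{Step 1 (base Lipschitz estimate).} We first invoke the uniform boundary Lipschitz estimate for $\cL_\va-\lambda I$ from \cite{Wan23}, the resolvent analogue of the Avellaneda--Lin estimate for $C^{1,\eta}$ domains, on the nested pair $U_r\subset U_{\f{3r}{2}}$. This gives
\[
\|\nabla u_{\va,\lambda}\|_{L^\infty(U_r)}
\leq
\f{C}{r}\left(\fint_{U_{\f{3r}{2}}}|u_{\va,\lambda}|^2\right)^{\f12}
+C(1+|\lambda|r^2)^{N_0}\, r\left(\fint_{U_{\f{3r}{2}}}|F|^p\right)^{\f1p}.
\]
The polynomial factor $(1+|\lambda|r^2)^{N_0}$ arises because the proof of that estimate treats $\lambda u_{\va,\lambda}$ as part of the right-hand side. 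One then bounds $\|\lambda u_{\va,\lambda}\|_{L^p}$ by bootstrapping with \eqref{Linftyestimate} (taking $k$ bounded), which produces finitely many powers of $|\lambda|r^2$.

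If the cited result only gives this with $U_{2r}$ on the right, we apply it at scale $r/4$ around each point of $U_r$ and cover. The hypotheses $r<\f{r_0}{4}$ and the $C^{1,\eta}$ regularity ensure that the boundary charts remain valid at these scales.

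\emph{Step 2 (inserting decay).} Next we apply Lemma \ref{Caccio-data} on the radii $\f{3r}{2}<2r$, with data $F\in L^p\subset L^q$ and $f=0$. This yields
\[
\left(\fint_{U_{\f{3r}{2}}}|u_{\va,\lambda}|^2\right)^{\f12}
\leq
\f{C(Ck)^{2k}}{(1+|\lambda|r^2)^k}\left(\fint_{U_{2r}}|u_{\va,\lambda}|^2\right)^{\f12}
+C(Ck)^{2k}\, r^2\left(\fint_{U_{2r}}|F|^p\right)^{\f1p}.
\]
Here Hölder's inequality converts the $L^q$ average of $F$ into its $L^p$ average. Strictly, Lemma \ref{Caccio-data} is stated for radii $\rho$ and $2\rho$; since its proof only uses nested shells, it holds verbatim for the ratio $\f43$ with modified constants.

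\emph{Step 3 (combining).} Substituting Step 2 into Step 1 gives the claimed estimate. The $F$-term from Step 2 acquires a factor $r^2\cdot r^{-1}=r$, matching the second term of the claim, and $(1+|\lambda|r^2)^{N_0}\geq1$ absorbs it.

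\emph{Main obstacle.} The only delicate point is Step 1: one must confirm that the boundary Lipschitz estimate in \cite{Wan23} holds for the resolvent with constants depending on $\lambda$ only through $(1+|\lambda|r^2)^{N_0}$, and that it is uniform in $\va$ including $\va=0$. If it is available only for $\lambda=0$, I would proceed as follows. Move $\lambda u_{\va,\lambda}$ to the right-hand side, giving $\cL_\va u=F+\lambda u$. Apply the $\lambda=0$ Lipschitz estimate of Avellaneda--Lin type. Then control $r|\lambda|\,\|u\|_{L^\infty}$ by \eqref{Linftyestimate} with $k$ fixed, which costs $|\lambda|r^2$ times the $L^2$ average. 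The loss is polynomial, and the $k$-decay from Step 2 makes it harmless, after replacing $k$ by $k+1$ and readjusting $C$.
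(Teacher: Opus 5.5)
Your proposal is correct and follows the paper's proof. The paper likewise applies the resolvent Lipschitz estimate of \cite[Theorem 3.5]{Wan23} on $U_r\subset U_{\f{3r}{2}}$, with a fixed power $(1+|\lambda|r^2)^{N_1}$ on the $F$-term, then runs the $k$-explicit Caccioppoli iteration of Lemma \ref{Caccio-data} on the radii $\f{3r}{2}<2r$ with $f=0$ and substitutes; your fallback of moving $\lambda u_{\va,\lambda}$ to the right-hand side is not needed, since that cited estimate already treats $\cL_\va-\lambda I$ directly.
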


\begin{proof}
We prove only the boundary case; the interior case is identical, with $U_\rho(x_0)$ replaced by $B_\rho(x_0)$. Up to translation, we let $x_0=0$.

By the localized Lipschitz estimate for $\cL_\va-\lambda I$ in \cite[Theorem 3.5]{Wan23}, applied to the pair of domains $U_r\subset U_{\f{3r}{2}}$, there exists $N_1=N_1(n)\in\Z_+$ such that
\be
\begin{aligned}
\|\nabla u_{\va,\lambda}\|_{L^\infty(U_r)}
&\leq
\f{C}{r}
\left(\fint_{U_{\f{3r}{2}}}|u_{\va,\lambda}|^2\right)^{\f{1}{2}}
+
C(1+|\lambda|r^2)^{N_1}
r\left(\fint_{U_{\f{3r}{2}}}|F|^p\right)^{\f{1}{p}}.
\end{aligned}
\label{Lip-lambda-basic}
\ee
Here and below, $C>0$ depends only on the parameters stated in the proposition.

We next estimate the intermediate $L^2$ term by the Caccioppoli estimate. Since $p\in(n,+\ift)$, the local $L^p$ average of $F$ controls the local $L^q$ average appearing in Lemma \ref{Caccio-data}. Applying the same iteration argument as in Lemma \ref{Caccio-data} on the two radii $\f{3r}{2}<2r$, with $f=0$, gives, for any integer $k\in\Z_+$,
\be
\begin{aligned}
\left(\fint_{U_{\f{3r}{2}}}|u_{\va,\lambda}|^2\right)^{\f{1}{2}}
&\leq
\f{C(Ck)^{2k}}
{(1+|\lambda|r^2)^k}
\left(\fint_{U_{2r}}|u_{\va,\lambda}|^2\right)^{\f{1}{2}}
+
C(Ck)^{2k}
r^2
\left(\fint_{U_{2r}}|F|^p\right)^{\f{1}{p}}.
\end{aligned}
\label{Lip-lambda-Caccio}
\ee
Substituting \eqref{Lip-lambda-Caccio} into \eqref{Lip-lambda-basic}, and absorbing the lower-order contribution into the factor $(1+|\lambda|r^2)^{N_0}$ after increasing $N_0$, completes the proof.
\end{proof}

Finally, we need second-order estimates for the homogenized operator. These estimates are used when differentiating the homogenized Green's function in the first-order expansion.

\begin{lem}[Localized second-order estimates for $\cL_0-\lambda I$]\label{L0-second}
Let $n\geq2$, and let $\lambda\in\Sigma_{\theta_0}\cup\{0\}$ with $\theta_0\in(0,\f{\pi}{2})$. Let $U\subset\R^n$ be a bounded $C^{1,1}$ domain with parameters $\cC_{1,1}(U)$. Let $x_0\in\ol U$, $r\in(0,\f{r_0}{4})$, and $p\in(1,+\ift)$.

When $T_{2r}^{U}(x_0)\neq\emptyset$, assume that $u_{0,\lambda}\in H^1(U_{2r}(x_0);\C^m)$ is a weak solution of
\[
\left\{
\begin{aligned}
(\cL_0-\lambda I)u_{0,\lambda}
&=0
&&\text{in }U_{2r}(x_0),
\\
u_{0,\lambda}&=0
&&\text{on }T_{2r}^{U}(x_0).
\end{aligned}
\right.
\]
If $T_{2r}^{U}(x_0)=\emptyset$, assume that $B_{2r}(x_0)\subset U$ and $u_{0,\lambda}\in H^1(B_{2r}(x_0);\C^m)$ is a weak solution of
\[
(\cL_0-\lambda I)u_{0,\lambda}=0
\quad\text{in }B_{2r}(x_0).
\]
Then, for any $ k\in\Z_+$,
\[
\left(\fint_{U_r(x_0)}|\nabla^2 u_{0,\lambda}|^p\right)^{\f{1}{p}}
\leq
\f{C(Ck)^{2k}}
{(1+|\lambda|r^2)^k r^2}
\left(\fint_{U_{2r}(x_0)}|u_{0,\lambda}|^2\right)^{\f{1}{2}},
\]
where, in the interior case, $U_r(x_0)$ and $U_{2r}(x_0)$ are replaced by $B_r(x_0)$ and $B_{2r}(x_0)$, respectively. The constant $C$ depends only on $\mu$, $n$, $m$, $\theta_0$, $p$, and $\cC_{1,1}(U)$.

If, in addition, $U$ is a bounded $C^{2,1}$ domain with parameters $\cC_{2,1}(U)$, then, for any $\rho\in(0,1)$ and any $ k\in\Z_+$,
\[
\|\nabla^2 u_{0,\lambda}\|_{L^\infty(U_r(x_0))}
\leq
\f{C(Ck)^{2k}}
{(1+|\lambda|r^2)^k r^2}
\left(\fint_{U_{2r}(x_0)}|u_{0,\lambda}|^2\right)^{\f{1}{2}},
\]
and
\[
[\nabla^2 u_{0,\lambda}]_{C^{0,\rho}(U_r(x_0))}
\leq
\f{C(Ck)^{2k}}
{(1+|\lambda|r^2)^k r^{2+\rho}}
\left(\fint_{U_{2r}(x_0)}|u_{0,\lambda}|^2\right)^{\f{1}{2}},
\]
where, in the interior case, $U_r(x_0)$ and $U_{2r}(x_0)$ are replaced by $B_r(x_0)$ and $B_{2r}(x_0)$, respectively. In these two estimates, the constant $C$ depends only on $\mu$, $n$, $m$, $\theta_0$, $\rho$, and $\cC_{2,1}(U)$.
\end{lem}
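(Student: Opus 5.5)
The plan follows the pattern of Proposition \ref{Lip-lambda}. We first obtain an estimate with constant factors and no decay in $k$, absorbing any polynomial growth in $|\lambda|r^2$. We then feed the Caccioppoli iteration of Lemma \ref{Caccio-data} into the $L^2$ average on an intermediate ball. This produces the factor $(Ck)^{2k}(1+|\lambda|r^2)^{-k}$, and the polynomial factors are absorbed by raising $k$. Throughout we take $x_0=0$ and treat only the boundary case; the interior case is the same with $U_\rho$ replaced by $B_\rho$.

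\emph{Step 1: the $W^{2,p}$ bound.} Since $\cL_0$ has constant coefficients $\widehat A$ satisfying \eqref{syl0}--\eqref{ell0}, we rewrite the equation as $\cL_0u_{0,\lambda}=\lambda u_{0,\lambda}$ in $U_{2r}$ with $u_{0,\lambda}=0$ on $T^U_{2r}$. The local boundary $W^{2,p}$ estimate for constant-coefficient Legendre elliptic systems in $C^{1,1}$ domains (flatten the boundary and use Agmon--Douglis--Nirenberg) applies on a nested pair $U_r\subset U_{5r/4}$. It gives
\[
\Big(\fint_{U_r}|\nabla^2u_{0,\lambda}|^p\Big)^{\f1p}\le C\Big\{|\lambda|\Big(\fint_{U_{5r/4}}|u_{0,\lambda}|^p\Big)^{\f1p}+r^{-2}\Big(\fint_{U_{5r/4}}|u_{0,\lambda}|^p\Big)^{\f1p}\Big\}.
\]
We bound the $L^p$ average by the $L^\infty$ norm and apply \eqref{Linftyestimate} of Proposition \ref{Holder-Linfty-data} with $F=f=0$ on $U_{5r/4}\subset U_{3r/2}$ (after a harmless change of radii), with $s=2$. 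This yields
\[
\Big(\fint_{U_r}|\nabla^2u_{0,\lambda}|^p\Big)^{\f1p}\le C r^{-2}\,\frac{(1+|\lambda|r^2)(Ck)^{2k}}{(1+|\lambda|r^2)^k}\Big(\fint_{U_{2r}}|u_{0,\lambda}|^2\Big)^{\f12}.
\]
Replacing $k$ by $k+1$ and using $(C(k+1))^{2k+2}\le (C'k)^{2k}$, exactly as at the end of the proof of Lemma \ref{Caccio-data}, gives the first estimate. Because $u_{0,\lambda}$ is an eigenfunction-type solution, $\lambda u_{0,\lambda}$ is again a solution of the same homogeneous problem. So the right-hand side carries no data terms, and no $(1+|\lambda|r^2)^{N_0}$ factor survives.

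\emph{Step 2: the $C^{2,1}$ case.} The same scheme applies, now using Schauder theory. In a $C^{2,1}$ domain, the local boundary $C^{2,\rho}$ estimate for the constant-coefficient system $\cL_0 u=\lambda u$ gives
\[
\|\nabla^2u\|_{L^\infty(U_r)}+r^\rho[\nabla^2u]_{C^{0,\rho}(U_r)}\le C\big(r^{-2}\|u\|_{L^\infty(U_{5r/4})}+\|\lambda u\|_{L^\infty(U_{5r/4})}+r^\rho[\lambda u]_{C^{0,\rho}(U_{5r/4})}\big).
\]
The Hölder seminorm of $\lambda u$ on the right is controlled by \eqref{HolderEstimate} with exponent $\sigma$ and an interpolation. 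Alternatively, and more cleanly, we bootstrap: $\lambda u$ solves the same equation, so its $C^{0,\rho}$ norm is bounded by $|\lambda| r^{-\rho}$ times an $L^\infty$ norm, using the $W^{2,p}$ estimate of Step 1 with $p>n$ together with Sobolev embedding $W^{2,p}\subset C^{1,1-n/p}$. The total outcome is a factor $C(1+|\lambda|r^2)^{2}$ times the $L^\infty$ norm on a slightly larger set. We again bound that $L^\infty$ norm by \eqref{Linftyestimate} and absorb the polynomial factor by shifting $k\mapsto k+2$.

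\emph{Main obstacle.} The main difficulty is bookkeeping of the nested radii. Several estimates are applied successively on $U_r\subset U_{5r/4}\subset U_{3r/2}\subset U_{2r}$, and each must be used with $r<r_0/4$ so that the boundary is a single graph. We must also check that the constants in the constant-coefficient boundary estimates depend only on $\mu$, $p$ (or $\rho$) and $\cC_{1,1}(U)$ (resp.\ $\cC_{2,1}(U)$), uniformly in $\lambda$. This uniformity holds because $\lambda$ enters only through the right-hand side $\lambda u$, whose size is then paid for by the $k$-decay.
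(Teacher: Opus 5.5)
Your proposal is correct and follows essentially the paper's route: a classical constant-coefficient $W^{2,p}$/Schauder estimate with only polynomial loss in $|\lambda|r^2$, combined with the $k$-decay from the Caccioppoli iteration, after which the polynomial factor is absorbed by shifting $k$. The only difference is bookkeeping. The paper quotes \cite[Lemma 4.3]{Wan23} for the polynomial-loss estimate (with a power $(1+|\lambda|r^2)^{N_1}$) and feeds in the $L^2$ bound of Lemma \ref{Caccio-data} with $K=k+N_1$. You instead derive the loss directly by treating $\lambda u_{0,\lambda}$ as a source term, and take the decay from the $L^\infty$/H\"older bounds of Proposition \ref{Holder-Linfty-data}; this is equally valid, since that proposition's nominal dependence on $\tau,\nu$ is vacuous when the coefficients $\widehat A$ are constant.
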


\begin{proof}
We prove only the boundary case; the interior case is identical, with $U_r$ and $U_{2r}$ replaced by $B_r$ and $B_{2r}$. Up to translation, we let $x_0=0$.

We first prove the $W^{2,p}$ estimate. By the localized constant-coefficient second-order estimate in \cite[Lemma 4.3]{Wan23}, applied to the pair of domains $U_r\subset U_{\f{3r}{2}}$, there exists an integer $N_1=N_1(n)\in\Z_+$ such that
\be
\left(\fint_{U_r}|\nabla^2 u_{0,\lambda}|^p\right)^{\f{1}{p}}
\leq
\f{C(1+|\lambda|r^2)^{N_1}}{r^2}
\left(\fint_{U_{\f{3r}{2}}}|u_{0,\lambda}|^2\right)^{\f{1}{2}}.
\label{L0-second-basic}
\ee
This is the usual local $W^{2,p}$ estimate for the constant-coefficient operator $\cL_0$, applied to $\cL_0u_{0,\lambda}=\lambda u_{0,\lambda}$, in the form recorded in \cite[Lemma 4.3]{Wan23}.

It remains to estimate the intermediate $L^2$ norm. Applying Lemma \ref{Caccio-data} with $F=0$ and $f=0$, or equivalently applying the same iteration argument on the pair of radii $\f{3r}{2}<2r$, gives, for any integer $K\in\Z_+$,
\be
\left(\fint_{U_{\f{3r}{2}}}|u_{0,\lambda}|^2\right)^{\f{1}{2}}
\leq
\f{C(CK)^{2K}}
{(1+|\lambda|r^2)^K}
\left(\fint_{U_{2r}}|u_{0,\lambda}|^2\right)^{\f{1}{2}}.
\label{L0-second-Caccio}
\ee
Taking $K=k+N_1$ in \eqref{L0-second-Caccio} and substituting the result into \eqref{L0-second-basic}, we obtain
\[
\left(\fint_{U_r}|\nabla^2 u_{0,\lambda}|^p\right)^{\f{1}{p}}
\leq
\f{C(C(k+N_1))^{2(k+N_1)}}
{(1+|\lambda|r^2)^k r^2}
\left(\fint_{U_{2r}}|u_{0,\lambda}|^2\right)^{\f{1}{2}}.
\]
Since $N_1$ is fixed and $k\in\Z_+$, the factor $(C(k+N_1))^{2(k+N_1)}$ may be absorbed into $(Ck)^{2k}$ after increasing $C$. This proves the first estimate.

Assume now that $U$ is of class $C^{2,1}$. The localized constant-coefficient $C^2$ and $C^{2,\rho}$ estimates in \cite[Lemma 4.3]{Wan23}, applied again on $U_r\subset U_{\f{3r}{2}}$, give
\be
\|\nabla^2 u_{0,\lambda}\|_{L^\infty(U_r)}
\leq
\f{C(1+|\lambda|r^2)^{N_1}}{r^2}
\left(\fint_{U_{\f{3r}{2}}}|u_{0,\lambda}|^2\right)^{\f{1}{2}},
\label{L0-second-Linfty-basic}
\ee
and
\be
[\nabla^2 u_{0,\lambda}]_{C^{0,\rho}(U_r)}
\leq
\f{C(1+|\lambda|r^2)^{N_1}}{r^{2+\rho}}
\left(\fint_{U_{\f{3r}{2}}}|u_{0,\lambda}|^2\right)^{\f{1}{2}}.
\label{L0-second-Holder-basic}
\ee
Combining \eqref{L0-second-Linfty-basic} and \eqref{L0-second-Holder-basic} with \eqref{L0-second-Caccio}, and taking again $K=k+N_1$, yields
\[
\|\nabla^2 u_{0,\lambda}\|_{L^\infty(U_r)}
\leq
\f{C(Ck)^{2k}}
{(1+|\lambda|r^2)^k r^2}
\left(\fint_{U_{2r}}|u_{0,\lambda}|^2\right)^{\f{1}{2}},
\]
and
\[
[\nabla^2 u_{0,\lambda}]_{C^{0,\rho}(U_r)}
\leq
\f{C(Ck)^{2k}}
{(1+|\lambda|r^2)^k r^{2+\rho}}
\left(\fint_{U_{2r}}|u_{0,\lambda}|^2\right)^{\f{1}{2}}.
\]
This completes the proof.
\end{proof}

\section{Existence and Estimates of Green's Functions}\label{Green-functions-section}

In this section, we recall the existence of resolvent Green's functions and collect the pointwise estimates needed later. Throughout this section, if $\va\in[0,1]$, we write $ A_\va(x) $ as in \eqref{Avax}. Thus, $\cL_0=-\op{div}(\widehat A\nabla)$ is included in the notation below.

\begin{prop}[Green's functions of $\cL_\va-\lambda I$ with $n\geq3$]\label{Green-ngeq3}
Let $\va\in[0,1]$, $n\geq3$, and let
$\lambda\in\Sigma_{\theta_0}\cup\{0\}$ with $\theta_0\in(0,\f{\pi}{2})$.
Let $U\subset\R^n$ be a bounded $C^1$ domain with parameters
$\cC_1(U)$. Assume that $A$ satisfies \eqref{intro-symmetry}--\eqref{intro-holder}. Then there exists a unique Green's function
\[
G_{\va,\lambda}
=(G_{\va,\lambda}^{\al\beta})_{\al,\beta\in\Z\cap[1,m]}
:
U\times U\to \C^{m\times m}\cup\{\infty\}
\]
such that, for any $y\in U$ and any $0<r<r_0$,
\[
G_{\va,\lambda}(\cdot,y)
\in
H^1(U\backslash B_r(y);\C^{m\times m})
\cap
W_0^{1,s}(U;\C^{m\times m})
\quad
\text{for any }s\in\left[1,\f{n}{n-1}\).
\]
Moreover, for any $\ga\in\Z\cap[1,m]$ and any
$\phi\in W_0^{1,p}(U;\C^m)$ with $p\in(n,+\ift)$,
\[
\mathcal B_{\va,\lambda,U}[G_{\va,\lambda}^{\gamma}(\cdot,y),\phi]
=
\phi^\gamma(y),
\]
where $G_{\va,\lambda}^{\gamma}(\cdot,y)$ denotes the $\gamma$-th column
of $G_{\va,\lambda}(\cdot,y)$ and
\be
\mathcal B_{\va,\lambda,U}[u,v]
:=
\int_U\<A_{\va}\nabla u,\ol{\nabla v}\>
-\lambda\int_{U}u\ol{v},
\quad
u,v\in H^1(U;\C^{m}),
\label{defnBva}
\ee
is the bilinear form corresponding to $\cL_{\va}-\lambda I$.

In particular, if $F\in L^q(U;\C^m)$ with $q\in(\f{n}{2},+\ift)$, then
\[
u_{\va,\lambda}(x)
=
\int_U G_{\va,\lambda}(x,y)\overline{F(y)}\ud y
\]
is the weak solution of
\[
(\cL_\va-\lambda I)u_{\va,\lambda}=F
\quad\text{in }U,
\quad
u_{\va,\lambda}=0
\quad\text{on }\partial U.
\]
Furthermore, if $G_{\va,\ol\lambda}$ is the Green's function of
$\cL_\va-\ol\lambda I$, then
\[
G_{\va,\lambda}(x,y)=\left[\overline{G_{\va,\ol\lambda}(y,x)}\right]^T,
\]
that is,
\[
G_{\va,\lambda}^{\al\beta}(x,y)
=
\overline{
G_{\va,\ol\lambda}^{\beta\al}(y,x)}
\]
for any $\al,\beta\in\Z\cap[1,m]$ and any $x,y\in U$ with $x\neq y$.

For any $\sigma_1,\sigma_2\in(0,1)$ and any $ k\in\Z_+$, one has
\be
\left|G_{\va,\lambda}(x,y)\right|
\leq
\f{C(Ck)^{2k}}
{(1+|\lambda||x-y|^2)^k |x-y|^{n-2}}
\min\left\{
1,\f{\delta(x)^{\sigma_1}}{|x-y|^{\sigma_1}},
\f{\delta(y)^{\sigma_2}}{|x-y|^{\sigma_2}}
\right\}
\label{Gvaldangeq3}
\ee
for any $x,y\in U$ with $x\neq y$, where $\delta(x)=\dist(x,\pa U)$. The constant $C>0$ depends only on $\mu$, $n$, $m$, $\theta_0$, $\sigma_1$, $\sigma_2$, $\tau$, $\nu$, and $\cC_1(U)$.
\end{prop}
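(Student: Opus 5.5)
The existence, uniqueness, representation formula and adjoint symmetry are the standard Grüter--Widman / Hofmann--Kim construction. I plan to take them from the resolvent version in \cite{Wan23} (see also \cite{HK04,DK14}). The only new point is the explicit $k$-dependence in \eqref{Gvaldangeq3}.

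For the construction, I would proceed as follows. For $\lambda\in\Sigma_{\theta_0}\cup\{0\}$ the form $\mathcal B_{\va,\lambda,U}$ is coercive on $H_0^1(U;\C^m)$, up to the factor $\sin\theta_0$ coming from the real-imaginary part argument. So one defines approximate Green's functions $G^\rho(\cdot,y)$ by testing against $|B_\rho|^{-1}\chi_{B_\rho(y)}e^\gamma$. The local boundedness of Proposition \ref{Holder-Linfty-data} (with $k=1$ and $|\lambda|$-factors dropped) gives $\rho$-uniform bounds $|G^\rho(x,y)|\le C|x-y|^{2-n}$ and uniform $W^{1,s}$, $s<\frac{n}{n-1}$, and $H^1(U\setminus B_r(y))$ bounds. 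Passing $\rho\to0$ gives $G_{\va,\lambda}$. Then $G_{\va,\lambda}(x,y)=\overline{G_{\va,\bar\lambda}(y,x)}^T$ follows from the symmetric identity for the approximants, using $A^*=A$ and the fact that the adjoint of $\cL_\va-\lambda$ is $\cL_\va-\bar\lambda$.

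For the decay estimate, fix $x\ne y$, put $r=|x-y|/4$ (capped by $r_0/4$; for larger distances use a chain of balls or the boundedness of $U$), and let $u=G_{\va,\lambda}(\cdot,y)$. Then $u$ solves $(\cL_\va-\lambda)u=0$ in $U_{2r}(x)$ with zero boundary values on $T^U_{2r}(x)$. By \eqref{Linftyestimate} with $F=f=0$,
\[
|u(x)|\le \frac{C(Ck)^{2k}}{(1+|\lambda|r^2)^k}\Big(\fint_{U_{2r}(x)}|u|^2\Big)^{1/2}.
\]
The $k=1$ bound from the construction controls the average by $Cr^{2-n}$, which gives the main factor, since $1+|\lambda|r^2\ge c(1+|\lambda||x-y|^2)$ and the resulting $16^k$ is absorbed into $(Ck)^{2k}$. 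For the boundary factor $\delta(x)^{\sigma_1}|x-y|^{-\sigma_1}$, I would assume $\delta(x)<r/4$. Take $\bar x\in\partial U$ with $|x-\bar x|=\delta(x)$ and apply \eqref{HolderEstimate} in $U_{r}(\bar x)$ with $p$ chosen so that $\sigma=1-n/p\ge\sigma_1$. Since $u(\bar x)=0$, this gives $|u(x)|\le[u]_{C^{0,\sigma}}\delta(x)^\sigma$, and the seminorm carries the same $(Ck)^{2k}(1+|\lambda|r^2)^{-k}r^{-\sigma}\cdot r^{2-n}$ factor. The $\delta(y)^{\sigma_2}$ factor comes from the same argument applied to $G_{\va,\bar\lambda}(\cdot,x)$, via the adjoint relation, since $\bar\lambda\in\Sigma_{\theta_0}$ as well.

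The main obstacle is the distant regime $|x-y|\gtrsim r_0$, where the localized results require $r<r_0/4$. There I would take $r=r_0/8$ and use the global $L^2$ estimate $\|u\|_{L^2(U\setminus B_{r}(y))}\lesssim (1+|\lambda|)^{-k}$-type decay obtained by iterating Lemma \ref{Caccio-data} along a chain of $O(\operatorname{diam}U/r_0)$ balls. Since $\operatorname{diam}U$ is bounded, the factor $(1+|\lambda|r_0^2)^{-k}$ is comparable to $(1+|\lambda||x-y|^2)^{-k}$ up to $C^k$, which is again absorbed into $(Ck)^{2k}$.
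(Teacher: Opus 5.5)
Your proposal is correct and follows essentially the paper's argument: you take existence, the representation formula and duality from \cite{Wan23}, control the local $L^2$ average of $G_{\va,\lambda}(\cdot,y)$ by $C|x-y|^{2-n}$ using the fixed-$k$ bound, obtain the factor $(Ck)^{2k}(1+|\lambda||x-y|^2)^{-k}$ and, via vanishing at a nearest boundary point, the factor $\delta(x)^{\sigma_1}$ from Proposition \ref{Holder-Linfty-data}, and get the factor $\delta(y)^{\sigma_2}$ from duality with $G_{\va,\ol\lambda}$. You also treat the regime $|x-y|\gtrsim r_0$ explicitly, which the paper passes over; this is fine, but the chain of balls is unnecessary, since capping the radius at a fixed multiple of $r_0$ and using $1+|\lambda||x-y|^2\le C(1+|\lambda|r_0^2)$, with $C$ depending on $\op{diam}U/r_0$, already lets the extra $C^k$ be absorbed into $(Ck)^{2k}$.
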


\begin{proof}
The existence statement and the representation formula follow from \cite[Theorem 4.8]{Wan23}. It remains to prove the refined estimate \eqref{Gvaldangeq3}, with the explicit dependence on $k$.

Fix $x,y\in U$ with $x\neq y$ and set $r=|x-y|$. Since $y\notin U_{\f r3}(x)$, \cite[Equation (4.40)]{Wan23}, applied with the fixed value $k=1$, gives
\be
\left(\fint_{U_{\f r3}(x)}
|G_{\va,\lambda}(\zeta,y)|^2\ud\zeta
\right)^{\f12}
\leq
Cr^{2-n}.
\label{leqCrn2}
\ee
Applying the local $L^\infty$ estimate \eqref{Linftyestimate} to
$G_{\va,\lambda}(\cdot,y)$ in $U_{\f r3}(x)$, we obtain, for any $k\in\Z_+$,
\be
\begin{aligned}
|G_{\va,\lambda}(x,y)|
&\leq
\f{C(Ck)^{2k}}
{(1+|\lambda|r^2)^k}
\left(\fint_{U_{\f r3}(x)}
|G_{\va,\lambda}(\zeta,y)|^2\ud\zeta
\right)^{\f12}
\\
&\leq
\f{C(Ck)^{2k}}
{(1+|\lambda||x-y|^2)^k |x-y|^{n-2}} .
\end{aligned}
\label{GvaldaLinfty}
\ee

We next obtain the boundary factor in the $x$-variable. By \eqref{HolderEstimate} and \eqref{leqCrn2},
\be
\begin{aligned}
\left[G_{\va,\lambda}(\cdot,y)\right]_{C^{0,\sigma_1}(U_{\f r6}(x))}
&\leq
\f{C(Ck)^{2k}}
{(1+|\lambda|r^2)^k r^{\sigma_1}}
\left(\fint_{U_{\f r3}(x)}
|G_{\va,\lambda}(\zeta,y)|^2\ud\zeta
\right)^{\f12}
\\
&\leq
\f{C(Ck)^{2k}}
{(1+|\lambda||x-y|^2)^k |x-y|^{n-2+\sigma_1}} .
\end{aligned}
\label{Gvaldasigma1}
\ee
If $\delta(x)\geq\f r6$, the estimate with the factor
$\delta(x)^{\sigma_1}|x-y|^{-\sigma_1}$ follows immediately from
\eqref{GvaldaLinfty} after increasing $C$. If $\delta(x)<\f r6$, choose
$\ol{x}\in\partial U$ such that $|x-\ol{x}|=\delta(x)$. Since
$G_{\va,\lambda}(\cdot,y)=0$ on $\partial U$ in the trace sense and
$\ol{x}\in U_{\f r6}(x)$ in the local boundary coordinates, \eqref{Gvaldasigma1} gives
\[
\begin{aligned}
|G_{\va,\lambda}(x,y)|
&=
|G_{\va,\lambda}(x,y)-G_{\va,\lambda}(\ol{x},y)|
\\
&\leq
\left[G_{\va,\lambda}(\cdot,y)\right]_{C^{0,\sigma_1}(U_{\f r6}(x))}
|x-\ol{x}|^{\sigma_1}
\\
&\leq
\f{C(Ck)^{2k}}
{(1+|\lambda||x-y|^2)^k}
\f{\delta(x)^{\sigma_1}}{|x-y|^{n-2+\sigma_1}} .
\end{aligned}
\]
The same argument applied to $G_{\va,\ol\lambda}$, followed by the duality relation, gives the corresponding boundary factor in the $y$-variable. Combining these two estimates with \eqref{GvaldaLinfty} proves \eqref{Gvaldangeq3}.
\end{proof}

The two-dimensional case is slightly different because the Green's function has logarithmic growth rather than a power singularity. We record the corresponding statement separately.

\begin{thm}[Green's functions of $\cL_\va-\lambda I$ with $n=2$]\label{Green-n2}
Let $\va\in[0,1]$, $n=2$, and let
$\lambda\in\Sigma_{\theta_0}\cup\{0\}$ with $\theta_0\in(0,\f{\pi}{2})$.
Let $U\subset\R^2$ be a bounded $C^1$ domain with parameters
$\cC_1(U)$. Assume that $A$ satisfies
\eqref{intro-symmetry}--\eqref{intro-holder}. Then there exists a unique
Green's function
\[
G_{\va,\lambda}
=
(G_{\va,\lambda}^{\al\beta})_{\al,\beta\in\Z\cap[1,m]}
:
U\times U\to \C^{m\times m}\cup\{\infty\}
\]
such that $G_{\va,\lambda}(\cdot,y)\in \mathrm{BMO}(U;\C^{m\times m})$ and
\[
\|G_{\va,\lambda}(\cdot,y)\|_{\mathrm{BMO}(U)}
\leq C
\]
uniformly for $y\in U$, where $C>0$ depends only on $\mu$, $m$, $\theta_0$, $\tau$, $\nu$, and $\cC_1(U)$.

Moreover, if $u_{\va,\lambda}$ is the weak solution of
\[
(\cL_\va-\lambda I)u_{\va,\lambda}=F
\quad\text{in }U,
\quad
u_{\va,\lambda}=0
\quad\text{on }\partial U,
\]
where $F\in L^q(U;\C^m)$ for some $q>1$, then
\[
u_{\va,\lambda}(x)
=
\int_U G_{\va,\lambda}(x,y)\overline{F(y)}\ud y.
\]
Furthermore, if $G_{\va,\ol\lambda}$ is the Green's function of
$\cL_\va-\ol\lambda I$, then
\be
G_{\va,\lambda}(x,y)
=
\left[
\overline{G_{\va,\ol\lambda}(y,x)}
\right]^T.
\label{duality2}
\ee

Let $R_U=\op{diam}(U)$ and $\delta(x)=\dist(x,\pa U)$. For any
$\sigma\in(0,1)$, the following estimate holds for any $x,y\in U$ with
$x\neq y$:
\[
\left|G_{\va,\lambda}(x,y)\right|
\leq
C
\left\{
1+|\lambda|^{\f{\sigma}{2}}R_U^\sigma
+\log\left(\f{R_U}{|x-y|}\right)
\right\}.
\]
Here $C>0$ depends only on $\mu$, $m$, $\theta_0$, $\sigma$, $\tau$, $\nu$, and $\cC_1(U)$.
\end{thm}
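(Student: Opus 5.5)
The bound is obtained in three steps: qualitative facts quoted from \cite{Wan23}, a bound on the mean of $G_{\va,\lambda}(\cdot,y)$ over a cube, and a dyadic chain from that cube down to the scale $|x-y|$ giving the logarithm. I expect the $\lambda$-dependence of the constant to be the main obstacle.

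\textbf{Qualitative part.} The existence and uniqueness of $G_{\va,\lambda}$ in dimension two, the uniform $\mathrm{BMO}$ bound, the representation formula and the duality relation \eqref{duality2} should be quoted from the two-dimensional construction in \cite{Wan23}, in the same way Proposition \ref{Green-ngeq3} quotes \cite[Theorem 4.8]{Wan23}. If that construction is not available in the needed form, I would obtain it by the standard lifting trick. One adds a dummy variable $x_3$ on a periodic interval and extends the coefficients trivially in $x_3$, so they remain periodic, H\"older continuous and symmetric. One then applies Proposition \ref{Green-ngeq3} on the resulting three-dimensional region and integrates the kernel in $x_3$. Duality follows as in the case $n\geq3$, by testing the defining identities of $G_{\va,\lambda}$ and $G_{\va,\ol\lambda}$ against each other.

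\textbf{Bounding the mean.} Fix $y$ and $x\neq y$, and set $r=|x-y|$ and $u=G_{\va,\lambda}(\cdot,y)$. Pick a boundary point $\bar z$ and a ball of radius $\rho\sim r_0$ centred there, lying at distance at least $c\rho$ from $y$ when possible. Since $u=0$ on $T^U$, the Poincar\'e--Sobolev inequality combined with the BMO bound (via John--Nirenberg) gives $\fint_{U_\rho(\bar z)}|u|\le C$. The BMO bound controls differences of averages between comparable overlapping balls by $C$. Chaining through a bounded number (depending on $\cC_1(U)$) of balls of size $\sim r_0$ then yields $|\fint_{U_{R}(y)}u|\le C$ for $R\sim r_0$.

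\textbf{Dyadic chain.} Set $\rho_j=2^{-j}R$ for $j=0,\dots,J$, with $\rho_J\sim r$. By the BMO bound, consecutive averages over the sets $U_{\rho_j}(y)$ differ by at most $C$, so $|\fint_{U_{\rho_J}(y)}u|\le C(1+\log(R_U/r))$. The same bound then holds for the average over $U_{r/3}(x)$, which sits inside a ball comparable to $U_{\rho_J}(y)$. On $U_{r/3}(x)$ the function $u$ solves $(\cL_\va-\lambda)u=0$ and vanishes on the boundary portion, since the pole lies outside. Applying the $L^\infty$ estimate \eqref{Linftyestimate} with $s=1$, $F=0$, $f=0$ and $k=1$ gives $|u(x)|\le C\fint_{U_{r/3}(x)}|u|$, which yields the logarithmic term.

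\textbf{Main obstacle: the $\lambda$-dependence.} The BMO bound and the averaging argument above treat $\lambda$ only through the sector constants. However, when the estimate is localized near the boundary at scale $R_U$ with $|\lambda|R_U^2$ large, the lower-order term $\lambda u$ cannot be absorbed. My first attempt would treat it as data $F=\lambda u$. I would use the H\"older estimate \eqref{HolderEstimate} with exponent $\sigma$ to compare $u$ with its mean, and interpolate between the trivial bound and the bound at scale $|\lambda|^{-1/2}$. That scale is where the Caccioppoli factor $(1+|\lambda|\rho^2)^{-k}$ of Lemma \ref{Caccio-data} begins to give decay. The H\"older modulus of $u$ measured at scale $R_U$ against scale $|\lambda|^{-1/2}$ should produce exactly the factor $|\lambda|^{\sigma/2}R_U^\sigma$. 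If this bookkeeping does not close, I would instead use the resolvent identity $G_{\va,\lambda}-G_{\va,0}=\lambda\, G_{\va,0}\star G_{\va,\lambda}$. I would bound the right-hand side using the $\lambda=0$ logarithmic bound together with an $L^1$ bound on $G_{\va,\lambda}(\cdot,y)$ of size $C|\lambda|^{-1}$, which comes from the sectorial resolvent estimate.
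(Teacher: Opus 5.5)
The paper does not prove this theorem. It cites \cite[Theorem 4.8]{Wan23} for the whole statement, pointwise bound included, so your qualitative paragraph agrees with it. Drop the lifting backup, though. Proposition \ref{Green-ngeq3} needs a bounded $C^1$ domain of $\R^n$, which $U\times(\text{periodic interval})$ is not. On an infinite cylinder, the $x_3$-integral of a kernel of size $|X|^{-1}$ diverges when $\lambda=0$.

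\textbf{The gap: anchoring the mean.} Your own derivation of the pointwise bound fails at this step. A bound on $\|u\|_{\mathrm{BMO}(U)}$ together with $u=0$ on $\pa U$ does not control averages. For example, $v=\min\{K,\log(1+e^K\delta(x))\}$ vanishes on $\pa U$ and has $\mathrm{BMO}(U)$ norm bounded independently of $K$, yet $\fint_{U_{r_0}(\bar z)}v\geq K-C$.

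``Poincar\'e--Sobolev'' does not repair this. It needs control of $\fint_{U_\rho(\bar z)}|\nabla u|^2$. At a Dirichlet boundary portion no constant may be subtracted, so Caccioppoli only gives $\rho^{-2}\fint_{U_{2\rho}(\bar z)}|u|^2$, which is exactly what you want to bound. The argument is circular unless you import a global bound such as $\sup_y\|\nabla G_{\va,\lda}(\cdot,y)\|_{L^s(U)}<\infty$ with $s<2$, which you do not quote.

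\textbf{The missing idea: boundary regularity.} Choose $\bar z\in\pa U$ with $y\notin U_{4\rho}(\bar z)$. Let $c$ be the mean of $u$ over $U_{4\rho}(\bar z)$ and $B\leq C$ its mean oscillation there.
\begin{enumerate}
\item By \eqref{HolderEstimate} and \eqref{Linftyestimate} with $F=f=0$ and $k=1$, for $z\in U_{\rho/2}(\bar z)$ one gets $|u(z)|\leq C(\delta(z)/\rho)^{\sigma}(|c|+B)$, with $C$ independent of $\lda$.
\item Fix $\eta$ small. On the layer $\{z\in U_{\rho/2}(\bar z):\delta(z)<\eta\rho\}$, of measure at least $c_0\eta\rho^2$, step 1 gives $|u-c|\geq(|c|-B)/2$.
\item Averaging over $U_{4\rho}(\bar z)$ gives $B\geq c_1\eta(|c|-B)$, hence $|c|\leq CB\leq C$.
\end{enumerate}
Alternatively, if the $\mathrm{BMO}$ norm in \cite{Wan23} is that of the zero extension, the anchor is immediate and no PDE input is needed.

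\textbf{The $\lda$-dependence.} With this anchor your dyadic chain goes through. It uses the equation only through homogeneous local estimates with $k=1$, whose constants are uniform over $\Sigma_{\theta_0}\cup\{0\}$. So it gives $|G_{\va,\lda}(x,y)|\leq C(1+\log(R_U/|x-y|))$ with no $\lda$-term, which implies the statement. Your ``main obstacle'' only appears if you anchor via Caccioppoli with a subtracted constant, since constants do not solve $(\cL_\va-\lda I)v=0$.

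Your fallbacks would not have closed the argument as written:
\begin{itemize}
\item In (b), $G_{\va,0}(x,\cdot)$ has a logarithmic singularity at $z=x$, so it cannot be paired with an $L^1$ bound on $G_{\va,\lda}(\cdot,y)$. You would need H\"older's inequality in $L^{p'}\times L^p$ with $p\in(1,2)$, plus an $L^1$--$\mathrm{BMO}$ interpolation. That yields a contribution of order $(|\lda|R_U^2)^{1/p'}$, which is a term of the stated form $|\lda|^{\sigma/2}R_U^\sigma$ with $\sigma=2/p'$.
\item Also in (b), the bound $\|G_{\va,\lda}(\cdot,y)\|_{L^1(U)}\leq C|\lda|^{-1}$ is an $L^\infty$ resolvent estimate for the adjoint problem. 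It does not follow from the $L^2$ sectorial bound, because the Dirac mass is not in $L^2$; it needs localized energy or kernel estimates.
\item Fallback (b) also presupposes the $\lda=0$ logarithmic bound, which needs the same anchor.
\item Fallback (a) is not yet an argument.
\end{itemize}
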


\begin{proof}
The result follows from \cite[Theorem 4.8]{Wan23}.
\end{proof}

We next derive gradient estimates for the Green's functions. These estimates are the pointwise input for the convergence estimates in the next section.

\begin{prop}[Gradient estimates for Green's functions]\label{Green-gradient}
Let $\va\in[0,1]$, $n\geq2$, and let
$\lambda\in\Sigma_{\theta_0}\cup\{0\}$ with $\theta_0\in(0,\f{\pi}{2})$.
Let $U\subset\R^n$ be a bounded $C^{1,\eta}$ domain with parameters
$\cC_{1,\eta}(U)$, where $\eta\in(0,1)$. Assume that $A$ satisfies
\eqref{intro-symmetry}--\eqref{intro-holder}. Then, for any $ k\in\Z_+$, the Green's function $G_{\va,\lambda}$ of
$\cL_\va-\lambda I$ satisfies
\begin{align}
|\nabla_1G_{\va,\lambda}(x,y)|+|\nabla_2G_{\va,\lambda}(x,y)|
&\leq
\f{C(Ck)^{2k}}{(1+|\lambda||x-y|^2)^k |x-y|^{n-1}},
\label{Lipschitz1}
\\
|\nabla_1G_{\va,\lambda}(x,y)|
&\leq
\f{C(Ck)^{2k}\delta(y)}
{(1+|\lambda||x-y|^2)^k |x-y|^{n}},
\label{Lipschitz2}
\\
|\nabla_2G_{\va,\lambda}(x,y)|
&\leq
\f{C(Ck)^{2k}\delta(x)}
{(1+|\lambda||x-y|^2)^k |x-y|^{n}},
\label{Lipschitz3}
\\
|\nabla_1\nabla_2G_{\va,\lambda}(x,y)|
&\leq
\f{C(Ck)^{2k}}
{(1+|\lambda||x-y|^2)^k |x-y|^{n}}
\label{Lipschitz4}
\end{align}
for any $x,y\in U$ with $x\neq y$, where
\[
\delta(x)=\op{dist}(x,\partial U).
\]
Here $C>0$ depends only on $\mu$, $n$, $m$, $\theta_0$, $\tau$, $\nu$, $\eta$, and $\cC_{1,\eta}(U)$.
\end{prop}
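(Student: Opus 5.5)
We will derive the four bounds from the localized Lipschitz estimate Proposition \ref{Lip-lambda}, using the pointwise bounds of Proposition \ref{Green-ngeq3} (or Theorem \ref{Green-n2} when $n=2$) as input and the duality relation $G_{\va,\lambda}(x,y)=[\overline{G_{\va,\ol\lambda}(y,x)}]^T$ to pass between the two variables. Fix $x\neq y$ and set $r=|x-y|$. By scaling we may assume $r<r_0/4$. If $r$ is comparable to $\op{diam}(U)$, we cover by finitely many balls; the $(1+|\lambda|r^2)^{-k}$ factor is then recovered from Lemma \ref{Caccio-data}.

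\emph{Estimate \eqref{Lipschitz1}.} The function $u=G_{\va,\lambda}(\cdot,y)$ solves $(\cL_\va-\lambda I)u=0$ in $U_{r/3}(x)$ and vanishes on $T^U_{r/3}(x)$. Proposition \ref{Lip-lambda} with $F=0$ on $U_{r/6}(x)\subset U_{r/3}(x)$ gives
\[
|\nabla_1G_{\va,\lambda}(x,y)|\le \f{C(Ck)^{2k}}{(1+|\lambda|r^2)^k r}\Big(\fint_{U_{r/3}(x)}|G_{\va,\lambda}(\cdot,y)|^2\Big)^{1/2}.
\]
For $n\ge3$ the $L^2$ average is at most $Cr^{2-n}$ by \eqref{leqCrn2}, which yields \eqref{Lipschitz1} for $\nabla_1$.

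For $n=2$ the logarithmic bound of Theorem \ref{Green-n2} is not scale-correct. Instead we use the $L^2$ average of $u-\bar u$, or equivalently we replace $u$ by $u-c$. This is not admissible near the boundary, where $u$ must vanish. So we use the BMO bound, together with boundary vanishing and a H\"older estimate, to show $(\fint_{U_{r/3}(x)}|G|^2)^{1/2}\le C$ when $T^U_{r/3}(x)\neq\emptyset$. In the interior case we apply Proposition \ref{Lip-lambda} to $u-c$ with right-hand side $\lambda c$, and absorb the extra term using the Caccioppoli decay. This two-dimensional bookkeeping is the main technical nuisance.

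The bound on $\nabla_2$ follows by applying the same argument to $G_{\va,\ol\lambda}$ and using duality.

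\emph{Estimates \eqref{Lipschitz2} and \eqref{Lipschitz3}.} For \eqref{Lipschitz2}, we feed the boundary-weighted bound into the same Lipschitz step. For $\zeta\in U_{r/3}(x)$ we have $|\zeta-y|\sim r$, so \eqref{Gvaldangeq3} with the factor $\delta(y)^{\sigma_2}/r^{\sigma_2}$ gives the $L^2$ average $\le Cr^{2-n}(\delta(y)/r)^{\sigma_2}$. This only yields a power $\sigma_2<1$, whereas the statement needs exponent $1$. To reach exponent $1$ we first obtain a Lipschitz bound in $y$ and then integrate. Concretely, apply \eqref{Lipschitz1} in the $y$ variable to $G_{\va,\lambda}(z,\cdot)$ for $z$ near $x$: since $G$ vanishes at the boundary point $\bar y$ nearest $y$, we get $|G_{\va,\lambda}(z,y)|\le C(Ck)^{2k}\delta(y)r^{1-n}(1+|\lambda|r^2)^{-k}$ when $\delta(y)<r/6$. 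The case $\delta(y)\ge r/6$ is trivial. Substituting this improved $L^2$ bound into the Lipschitz step in $x$ yields \eqref{Lipschitz2}. Estimate \eqref{Lipschitz3} then follows by duality.

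\emph{Estimate \eqref{Lipschitz4}.} Fix $\ell$ and $\ga$, and set $w(z)=\pa_{y_\ell}G^{\cdot\ga}_{\va,\lambda}(z,y)$. This function solves the homogeneous resolvent equation in $U_{r/3}(x)$ with zero boundary data, which we justify via difference quotients in $y$. By \eqref{Lipschitz1} for $\nabla_2$, its $L^2$ average over $U_{r/3}(x)$ is at most $C(Ck)^{2k}(1+|\lambda|r^2)^{-k}r^{1-n}$. Proposition \ref{Lip-lambda} then gives \eqref{Lipschitz4}. Throughout, the product of two factors $(Ck)^{2k}$ is absorbed by the usual adjustment of $C$. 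The main obstacle is obtaining the full power of $\delta(y)$ and handling $n=2$, both of which are treated as described above.
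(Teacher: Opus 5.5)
For $n\geq3$ your argument is correct and is essentially the paper's, which only says that \eqref{Lipschitz1}--\eqref{Lipschitz4} follow from \eqref{Gvaldangeq3}, Proposition \ref{Lip-lambda} and duality. Your integration of $\nabla_2G_{\va,\lambda}(z,\cdot)$ along $[\bar y,y]$, which upgrades $\delta(y)^{\sigma_2}$ to $\delta(y)$, is a correct detail that the paper leaves implicit.

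\textbf{The gap: $n=2$, interior case of \eqref{Lipschitz1}.} Your two-dimensional versions of \eqref{Lipschitz2}--\eqref{Lipschitz4} are all derived from \eqref{Lipschitz1}, so the whole $n=2$ case rests on this step.

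Applying Proposition \ref{Lip-lambda} to $u-c$ with $F=\lambda c$ produces the term $C(Ck)^{2k}(1+|\lambda|r^2)^{N_0}\,r|\lambda||c|$. To close, you therefore need $|\lambda|r^2|c|\leq C(Ck)^{2k}(1+|\lambda|r^2)^{-k-N_0}$. The only size bound on $c$ you have is Theorem \ref{Green-n2}, namely $|c|\leq C\{1+|\lambda|^{\sigma/2}R_U^\sigma+\log(R_U/r)\}$. Lemma \ref{Caccio-data} only transfers this same non-scale-invariant bound from a larger ball, gaining $(1+|\lambda|r^2)^{-K}$, and that is no gain when $|\lambda|r^2\leq1$.

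Concretely, take $|\lambda|r^2=1$ and let $r/R_U\to0$. The bound you get for the extra term is then $Cr^{-1}\{1+(R_U/r)^\sigma+\log(R_U/r)\}$, not $Cr^{-1}$. So ``absorb the extra term using the Caccioppoli decay'' does not close. What is missing is a scale-invariant bound for the mean of $G_{\va,\lambda}(\cdot,y)$ over $B_{r/3}(x)$, of the form $|c|\leq C\{1+\log_+(|\lambda|^{-1/2}r^{-1})\}$, and in particular $|c|\leq C$ once $|\lambda|r^2\geq1$.

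\textbf{Two ways to supply it.}

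\emph{A self-improving argument.} Suppose $B_{2\rho}(z)\subset U$ avoids $y$ and $|\lambda|\rho^2\geq K^2$. Lemma \ref{Caccio-data} (with $k=1$) and the uniform BMO bound give $|c_\rho|\leq C(1+K^2)^{-1}(|c_\rho|+C)$, hence $|c_\rho|\leq C$ for $K$ large. A BMO chain of dyadic balls then costs only $C$ per scale down to $r$. When $\delta(x)$ is small compared with $|\lambda|^{-1/2}$, your boundary argument takes the place of this step.

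\emph{The paper's order of proof.} The paper first proves \eqref{Lipschitz2} and \eqref{Lipschitz4} from the local $L^2$ bounds \cite[(4.67), (4.69)]{Wan23}. The latter reads $(\fint_{U_{r/4}(x)}|\nabla_2G_{\va,\lambda}(\cdot,y)|^2)^{1/2}\leq C(1+|\lambda|r^2)^{N_0}r^{-1}$. Its polynomial growth is absorbed by applying Proposition \ref{Lip-lambda} with $k+N_0$. Only afterwards is \eqref{Lipschitz1} obtained, by integrating $\nabla_1\nabla_2G_{\va,\lambda}(x,\cdot)$ along a curve from $y$ to $\partial U$, where $\nabla_1G_{\va,\lambda}(x,\cdot)$ vanishes by \eqref{Lipschitz2}. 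This route never needs a scale-invariant size bound for $G_{\va,\lambda}$ itself in dimension two.
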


\begin{proof}
If $n\geq3$, the estimates \eqref{Lipschitz1}--\eqref{Lipschitz4} follow from the Green-function bound \eqref{Gvaldangeq3}, the localized Lipschitz estimate in Proposition \ref{Lip-lambda}, and the duality relation. We therefore discuss only the two-dimensional case.

Let $x_0,y_0\in U$ with $x_0\neq y_0$, and set $r=|x_0-y_0|$. By \cite[Equation (4.67)]{Wan23},
\[
\left(\fint_{U_{\f r4}(x_0)}
|\nabla_2G_{\va,\lambda}(\zeta,y_0)|^2\ud\zeta
\right)^{\f12}
\leq
\f{C\delta(y_0)}{|x_0-y_0|}.
\]
Applying Proposition \ref{Lip-lambda} in the $x$-variable to
$G_{\va,\lambda}(\cdot,y_0)$ on $U_{\f r4}(x_0)$ gives
\eqref{Lipschitz2}. The estimate \eqref{Lipschitz3} follows from
\eqref{Lipschitz2} and the duality relation \eqref{duality2}.

It remains to prove the mixed derivative estimate. By \cite[Equation (4.69)]{Wan23},
\[
\left(\fint_{U_{\f r4}(x_0)}
|\nabla_2G_{\va,\lambda}(\zeta,y_0)|^2\ud\zeta
\right)^{\f12}
\leq
\f{C(1+|\lambda|r^2)^{N_0}}{r},
\]
where $N_0=N_0(n)>0$. Applying Proposition \ref{Lip-lambda} to
$\nabla_2G_{\va,\lambda}(\cdot,y_0)$ with the integer $k+N_0$ and then absorbing the fixed power $N_0$ into the constant, we obtain
\[
\begin{aligned}
|\nabla_1\nabla_2G_{\va,\lambda}(x_0,y_0)|
&\leq
\f{C(C(k+N_0))^{2(k+N_0)}}
{(1+|\lambda|r^2)^{k+N_0}r}
\left(\fint_{U_{\f r4}(x_0)}
|\nabla_2G_{\va,\lambda}(\zeta,y_0)|^2\ud\zeta
\right)^{\f12}
\\
&\leq
\f{C(Ck)^{2k}}
{(1+|\lambda|r^2)^k r^2}.
\end{aligned}
\]
Since $n=2$ and $r=|x_0-y_0|$, this is \eqref{Lipschitz4}. Finally, \eqref{Lipschitz1} follows from \eqref{Lipschitz2}--\eqref{Lipschitz4} by the same argument as in \cite[pp. 44--45]{Wan23}.
\end{proof}

\section{Convergence of Green's Functions}\label{ConGreenResolvent}

We now prove the resolvent Green-function convergence estimates. The first result is the zeroth-order estimate. The second gives the first-order approximation after insertion of the Dirichlet corrector.

\begin{prop}[Convergence of Green's functions I]\label{Green-convergence-I}
Let $\va\in(0,1]$, $n\geq2$, and let
$\lambda\in\Sigma_{\theta_0}\cup\{0\}$ with $\theta_0\in(0,\f{\pi}{2})$.
Let $U\subset\R^n$ be a bounded $C^{1,1}$ domain with parameters
$\cC_{1,1}(U)$. Assume that $A$ satisfies \eqref{intro-symmetry}--\eqref{intro-holder}. Then, for any $ k\in\Z_+$ and any $x,y\in U$ with $x\neq y$,
\[
\left|G_{\va,\lambda}(x,y)-G_{0,\lambda}(x,y)\right|
\leq
\f{C(Ck)^{2k}\va}
{(1+|\lambda||x-y|^2)^k |x-y|^{n-1}} .
\]
Here $C>0$ depends only on
$\mu$, $n$, $m$, $\theta_0$, $\tau$, $\nu$, and $\cC_{1,1}(U)$.
\end{prop}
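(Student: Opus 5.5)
The plan is to follow the Kenig--Lin--Shen strategy for the elliptic case, refined by the $k$-dependent local estimates above. The argument has two ingredients.

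\textbf{Step 1: a global bound with no decay factor.} We first prove the estimate with the decay factor omitted:
\[
|G_{\va,\lambda}(x,y)-G_{0,\lambda}(x,y)|\leq \f{C\va}{|x-y|^{n-1}}.
\]
This follows from a global $O(\va)$ convergence rate in $L^\ift$-type norms for the resolvent Dirichlet problem. Fix $y$, set $r=|x-y|$, and take $F$ supported in $U_{r/4}(y)$. Put
\[
u_\va(z)=\int G_{\va,\lambda}(z,\cdot)\overline F,\qquad u_0(z)=\int G_{0,\lambda}(z,\cdot)\overline F.
\]
We would show
\[
\|u_\va-u_0\|_{L^\ift(U_{r/4}(x))}\leq C\va r^{1+\f{n}{p'}-\cdots}\|F\|_{L^p}
\]
with the appropriate scaling. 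Duality in $F$ then gives pointwise control of $G_{\va,\lambda}(x,\cdot)-G_{0,\lambda}(x,\cdot)$ at $y$, using the symmetry relation of Proposition \ref{Green-ngeq3} or Theorem \ref{Green-n2}.

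To prove the $L^\ift$ rate, write
\[
w_\va=u_\va-u_0-\va\chi(z/\va)\nabla u_0,
\]
corrected near $\pa U$ by the Dirichlet corrector $\Phi_\va-P$ (as in Avellaneda--Lin and KLS), so that $w_\va=0$ on $\pa U$. Then compute
\[
(\cL_\va-\lambda)w_\va=\va\,\op{div}(\text{flux-corrector}\cdot\nabla^2u_0)+\va\lambda\chi\nabla u_0+\cdots.
\]
Represent $w_\va$ through $G_{\va,\lambda}$ and use the gradient bounds \eqref{Lipschitz1}--\eqref{Lipschitz3}. The term $\lambda\va\chi\nabla u_0$ is handled by the decay in $|\lambda||z-y|^2$ of $\nabla G_{0,\lambda}$, together with $\|\Phi_\va-P\|_{L^\ift}\leq C\va$ (the Avellaneda--Lin bound for $C^{1,1}$ domains). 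The second derivatives of $u_0$ are controlled by Lemma \ref{L0-second}. The singular parts near $y$ and near $x$ are integrated using \eqref{Lipschitz1}, which gives the $\va|x-y|^{1-n}$ bound.

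\textbf{Step 2: upgrading to the factor $(1+|\lambda|r^2)^{-k}$.} We use the local estimates with explicit $k$. The difference $v=G_{\va,\lambda}(\cdot,y)-G_{0,\lambda}(\cdot,y)$ satisfies, in $U_{r/2}(x)$,
\[
(\cL_\va-\lambda)v=\op{div}\big((A_\va-\widehat A)\nabla G_{0,\lambda}(\cdot,y)\big),
\]
with zero boundary data. This right-hand side is not small, so Caccioppoli cannot be applied to $v$ directly. Instead apply it to
\[
\tilde v=v-\va\chi(\cdot/\va)\nabla G_{0,\lambda}(\cdot,y)
\]
(with the boundary version using $\Phi_\va$). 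Its equation has right-hand side of size
\[
\va\big(|\nabla^2G_0|+|\lambda||\nabla G_0|\big)
\]
in divergence/nondivergence form. By Lemma \ref{L0-second} and \eqref{Lipschitz1}, each of these carries the factor $(Ck)^{2k}(1+|\lambda|r^2)^{-k}$.

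Apply \eqref{Linftyestimate} with parameter $k$ on $U_{r/4}(x)$. The homogeneous term gives $(Ck)^{2k}(1+|\lambda|r^2)^{-k}\cdot C\va r^{1-n}$ from Step 1. The source term gives $\va(1+|\lambda|r^2)^{N_0}$ times source bounds that decay like $(1+|\lambda|r^2)^{-(k+N_0+1)}$; this is done by invoking the source bounds with $k+N_0+1$ and absorbing $(C(k+N_0))^{2(k+N_0)}$ into $(Ck)^{2k}$. The correction $\va\chi\nabla G_0$ and the boundary layer $\va^{-1}(\Phi_\va-P)$ are bounded pointwise by $C\va|\nabla G_0|$, which already has the required form.

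\textbf{Main obstacle.} The main difficulty is Step 1: obtaining the sharp $O(\va)$ rate uniformly in $\lambda\in\Sigma_{\theta_0}$, without a $|\lambda|$-dependent loss, near the boundary in a $C^{1,1}$ domain. The key point is to use the Dirichlet corrector rather than cut-offs, which lose $\va^{1/2}$. The $\lambda\va\chi\nabla u_0$ term must also be absorbed using the resolvent decay. If needed, it suffices to prove Step 1 for $|\lambda|r^2\leq1$, since for $|\lambda|r^2>1$ the crude bounds \eqref{Gvaldangeq3} already decay fast and can be combined with Step 2 through an interpolation over scales $\va<r$. The case $r\leq\va$ is trivial from \eqref{Gvaldangeq3}.
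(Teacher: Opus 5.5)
Your architecture matches the paper's. You obtain an averaged $O(\va)$ bound by duality; the paper simply imports $\fint_{U_{4r}(y_0)}|G_{\va,\lambda}(x_0,\cdot)-G_{0,\lambda}(x_0,\cdot)|\le C\va r^{1-n}$ from the proof of \cite[Theorem 5.1]{Wan23}. You then upgrade it to a pointwise bound carrying $(1+|\lambda|r^2)^{-k}$ by a local approximation estimate, which is Lemma \ref{local-approx-lambda} in the paper.

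Your Step 2 works in the interior, but the boundary case has a genuine gap. Take $\tilde v=v-(\Phi_{\va,j}^\beta-P_j^\beta)\pa_jG_{0,\lambda}^\beta(\cdot,y)$. Its right-hand side is \emph{not} of size $\va(|\nabla^2G_{0,\lambda}|+|\lambda||\nabla G_{0,\lambda}|)$. Besides harmless terms, it contains the non-divergence term, schematically
\[
A(x/\va)\,\nabla\bigl[\Phi_\va-P-\va\chi(x/\va)\bigr]\,\nabla^2G_{0,\lambda}(\cdot,y).
\]
Its coefficient is of order one in a layer of width $\va$ along $\pa U$, because the trace $-\va\chi(\cdot/\va)$ oscillates tangentially. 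Away from that layer it is only of order $\va/\delta$. The estimate \eqref{Linftyestimate} sees a source $F$ only through $r^2(\fint|F|^q)^{1/q}$ with $q>\f n2\ge1$. So this term contributes at best $(\va/r)^{1/q}r^{2-n}$, i.e.\ a rate $\va^{1/q}$ instead of $\va$.

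To keep the sharp rate you need one of two devices.
(a) Keep the periodic corrector up to $\pa U$, so that all sources are genuinely $O(\va)$. Then handle the nonzero trace $-\va\chi(\cdot/\va)\nabla G_{0,\lambda}=O(\va|\nabla G_{0,\lambda}|)$ by splitting $\tilde v$ on a local $C^{1,1}$ domain $\widetilde U$ into three pieces: a source part with zero data, bounded through the Green function of $\widetilde U$; a boundary-data part, bounded by \eqref{MaximumPrinciple}; and a homogeneous remainder vanishing on $\pa\widetilde U\cap\pa U$, to which \eqref{Linftyestimate} with the factor $(1+|\lambda|r^2)^{-k}$ applies.
(b) Keep $\Phi_\va$, but put the layer term into the Green representation on $\widetilde U$. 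Use $|\widetilde G(x',z)|\le C\delta(z)|x'-z|^{1-n}$ together with $|\nabla(\Phi_\va-P-\va\chi(\cdot/\va))|\le C\min\{1,\va/\delta\}$; this is the device you already use in Step 1.
This is what Lemma \ref{local-approx-lambda} (via \cite[Lemma 5.4]{Wan23} and Lemma \ref{local-Linfty-lambda}) is meant to supply.

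There are two smaller corrections and one remark.

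First, duality does not give control ``at $y$''. With $F$ supported in $U_{r/4}(y)$ you only get an $L^{p'}$ bound on $G_{\va,\lambda}(x',\cdot)-G_{0,\lambda}(x',\cdot)$ over $U_{r/4}(y)$. Step 2, however, needs $\fint_{U_{r/2}(x)}|G_{\va,\lambda}(\cdot,y)-G_{0,\lambda}(\cdot,y)|$. Either run Step 2 in the second variable on the adjoint problem with $\ol\lambda$, as the paper does, or dualize with $F$ supported near $x$.

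Second, the reduction to $r>\va$ through \eqref{Gvaldangeq3} is available only for $n\ge3$. For $n=2$, integrate \eqref{Lipschitz1} from scale $r$ up to scale $\va$ and then use the case $r\approx\va$.

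Finally, your bookkeeping of the decay is the right one, and you should keep it in the repaired boundary step. You take the decay from \eqref{Lipschitz1} and Lemma \ref{L0-second} with a shifted index, and let only the average of $\tilde v$ carry $(Ck)^{2k}(1+|\lambda|r^2)^{-k}$. In particular, put no decay factor on the boundary-data term. The paper instead uses $k=1$ bounds for $G_{0,\lambda}$ and relies on the factor $(1+|\lambda|r^2)^{-k}$ that Lemmas \ref{local-Linfty-lambda} and \ref{local-approx-lambda} attach to the boundary-data and $\va$-terms. That factor cannot hold in general; both of the following violate it for $k\ge2$:
constant boundary data with $|\lambda|r^2\to\infty$, against Lemma \ref{local-Linfty-lambda};
$u_0=P_1^1$ and $u_\va=P_1^1+\va\chi_1^1(x/\va)+\va Z(x/\va)$, with $Z$ periodic and $\|Z\|_{L^\infty}\le C|\lambda|\va^2$, against Lemma \ref{local-approx-lambda}.
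So at that point the paper's argument needs exactly your device.
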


\begin{prop}[Convergence of Green's functions II]\label{Green-convergence-II}
Let $\va\in(0,1]$, $n\geq2$, and let
$\lambda\in\Sigma_{\theta_0}\cup\{0\}$ with $\theta_0\in(0,\f{\pi}{2})$.
Let $U\subset\R^n$ be a bounded $C^{2,1}$ domain with parameters
$\cC_{2,1}(U)$. Assume that $A$ satisfies \eqref{intro-symmetry}--\eqref{intro-holder}. Then, for any $ k\in\Z_+$, any $\al,\ga\in\Z\cap[1,m]$, and any
$x,y\in U$ with $x\neq y$,
\be
\left|
\f{\partial}{\partial x_i}
G_{\va,\lambda}^{\al\gamma}(x,y)
-
\f{\partial}{\partial x_i}
\Phi_{\va,j}^{\al\beta}(x)
\f{\partial}{\partial x_j}
G_{0,\lambda}^{\beta\gamma}(x,y)
\right|
\leq
\f{
C(Ck)^{2k}\va
\log\left(\va^{-1}|x-y|+2\right)
}{
(1+|\lambda||x-y|^2)^k |x-y|^n
}.
\label{Green-convergence-IIineq}
\ee
Here and throughout, the repeated indices $\beta$ and $j$ are summed over. The constant $C>0$ depends only on $\mu$, $n$, $m$, $\theta_0$, $\tau$, $\nu$, and $\cC_{2,1}(U)$.
\end{prop}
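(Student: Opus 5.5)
The plan is to follow the elliptic argument of Kenig--Lin--Shen (\cite{KLS14}) for the first-order expansion of Green's functions, keeping track of the resolvent parameter through the $(Ck)^{2k}(1+|\lambda|r^2)^{-k}$ factors supplied by Lemma \ref{Caccio-data}, Proposition \ref{Lip-lambda}, Lemma \ref{L0-second} and Proposition \ref{Green-gradient}.

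Fix $x_0,y_0\in U$ and set $r=|x_0-y_0|$. If $r\le C\va$, the estimate is trivial. Indeed, \eqref{Lipschitz1} bounds $\nabla_1 G_{\va,\lambda}$. The Dirichlet corrector satisfies $\|\nabla\Phi_{\va}\|_{L^\infty(U)}\le C$ by the uniform boundary Lipschitz estimate of Avellaneda--Lin (this needs $C^{1,\eta}$ and Hölder $A$). The gradient $\nabla_1G_{0,\lambda}$ is bounded by \eqref{Lipschitz1} with $\va=0$. Hence both terms are $\le C(Ck)^{2k}(1+|\lambda|r^2)^{-k}r^{1-n}\le C(Ck)^{2k}\va(1+|\lambda|r^2)^{-k}r^{-n}$.

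For $r>C\va$, I fix $y=y_0$ and work with $u_\va=G_{\va,\lambda}(\cdot,y_0)$ and $u_0=G_{0,\lambda}(\cdot,y_0)$ on $D=U_{r/2}(x_0)$, which avoids the pole. Both solve $(\cL_\va-\lambda)u_\va=0=(\cL_0-\lambda)u_0$ in $D$ and vanish on $T^U_{r/2}(x_0)$. Define
\[
w_\va=u_\va-u_0-\bigl(\Phi_{\va,j}^\beta-P_j^\beta\bigr)\pa_ju_0^\beta .
\]
Since $\Phi_\va-P=0$ on $\pa U$, $w_\va$ vanishes on the boundary portion. The time-independence and the identity $\cL_\va\Phi_\va=0$ give the usual formula
\[
(\cL_\va-\lambda)w_\va=\op{div}(f_\va)+F_\va .
\]
Here $f_\va$ involves $(\Phi_\va-P)\,\widehat A\,\nabla^2u_0$, plus flux-corrector terms $\va\,\phi(x/\va)\nabla^2u_0$ coming from $A\nabla\Phi_\va-\widehat A$; the latter are rewritten via the periodic flux correctors plus $A\nabla(\Phi_\va-P-\va\chi(x/\va))$. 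The zeroth-order part is $F_\va=\lambda(\Phi_\va-P)\nabla u_0$.

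The key inputs on this domain are:
\begin{itemize}
\item the corrector bound $\|\Phi_\va-P\|_{L^\infty(U)}\le C\va$ (Avellaneda--Lin);
\item the estimate $|\nabla(\Phi_\va-P-\va\chi(x/\va))(x)|\le C\min\{1,\va/\delta(x)\}$, which produces after integration the factor $\va\log(\va^{-1}r+2)$; this needs $C^{2,1}$ boundary;
\item Lemma \ref{L0-second} ($C^{2,\rho}$ version) and \eqref{Lipschitz1} for $u_0$ on $U_{r}(x_0)$, which give $|\nabla^2u_0|\le C(Ck)^{2k}(1+|\lambda|r^2)^{-k}r^{-n}$ and $|\lambda||\nabla u_0|\le C(Ck)^{2k}(1+|\lambda|r^2)^{-k}r^{-n-1}$, the extra $|\lambda|r^2$ being absorbed by shifting $k\to k+1$.
\end{itemize}

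Then I will apply the Lipschitz estimate for $w_\va$. Divergence-form data is not covered by Proposition \ref{Lip-lambda}, so I would instead use the $\va$-scale argument of \cite{KLS14}. The first part is the $L^\infty$ bound $\|w_\va\|_{L^\infty(U_{r/4})}\le C(Ck)^{2k}\va(1+|\lambda|r^2)^{-k}r^{1-n}$, which comes from Proposition \ref{Green-convergence-I} together with $\|\Phi_\va-P\|_\infty\le C\va$. The second part is a Lipschitz estimate for $\cL_\va-\lambda$ at scales between $\va$ and $r$, derived from \cite[Theorem 3.5]{Wan23} in its divergence-data form and made $k$-explicit via Lemma \ref{Caccio-data}. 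The Lipschitz step produces $\nabla w_\va(x_0)$, which differs from the target quantity $\nabla u_\va-\nabla\Phi_\va\nabla u_0$ by $(\Phi_\va-P)\nabla^2u_0=O(\va r^{-n})$, so the result is
\[
|\nabla w_\va(x_0)|\le C(Ck)^{2k}\va\log(\va^{-1}r+2)(1+|\lambda|r^2)^{-k}r^{-n}.
\]
The main obstacle is this step. A plain Lipschitz estimate with divergence data $f_\va$ demands Hölder control of $f_\va$, and $f_\va$ is only bounded with oscillation at scale $\va$. I will handle it as in \cite{KLS14}: take a Campanato iteration from scale $r$ down to scale $\va$, where each dyadic scale contributes $O(\va r^{-n})$ and the $\log(\va^{-1}r)$ dyadic steps produce the logarithm. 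At scale $\va$, I finish by blow-up and the uniform $C^{1,\nu}$ estimate for $\cL_1$, using that $\lambda\va^2$ is harmless.

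Throughout, every fixed power $(1+|\lambda|r^2)^{N_0}$ appearing on the data side is absorbed by raising $k$ to $k+N_0$ in the Caccioppoli-type bounds for $u_0$ and $u_\va$, exactly as in Lemma \ref{L0-second}. This gives \eqref{Green-convergence-IIineq}.
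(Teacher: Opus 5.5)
Your outline is sound and matches the paper's at the top level. The paper also disposes of $\va\geq r$ (with $r=|x_0-y_0|/16$) via \eqref{Lipschitz1} and \eqref{Phiuniform}. Otherwise it compares $G_{\va,\lda}(\cdot,y_0)$ with $G_{0,\lda}(\cdot,y_0)$ on $U_{4r}(x_0)$, using Proposition \ref{Green-convergence-I} and Lemma \ref{L0-second} as inputs.

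The differences are in the inner steps.

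First, the paper does not re-derive the local first-order comparison. It quotes Lemma \ref{local-gradient-approx-lambda}, adapted from \cite[Lemma 5.6]{Wan23} and built on the local estimates and the Green-function bounds of Proposition \ref{Green-gradient}. Your Campanato scheme re-proves that lemma. It is self-contained but much heavier, since it amounts to a $\lda$-dependent boundary large-scale regularity estimate with oscillating data.

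Second, the paper gets the factor $(1+|\lda|r^2)^{-k}$ solely from the prefactor in Lemma \ref{local-gradient-approx-lambda}, fed with $k=1$ inputs ($\|u_\va-u_0\|_{L^\infty}\leq C\va r^{1-n}$, $\|\nabla^2u_0\|_{L^\infty}\leq Cr^{-n}$). You instead feed inputs that already decay and let the local estimate lose fixed powers $(1+|\lda|r^2)^{N_0}$. Your bookkeeping is the more robust one: for a general pair $(u_\va,u_0)$, the data terms of such a local estimate cannot carry a decaying factor. For example:
\begin{itemize}
\item Take $u_0=P_1^\beta$, $\lda=-Lr^{-2}$, $\va=r/L$, and $u_\va=u_0+\va\chi_1^\beta(x/\va)+\rho$, where $\rho\in H_0^1(B_{4r})$ solves $(\cL_\va-\lda)\rho=\lda\va\chi_1^\beta(x/\va)$.
\item Writing $\chi_1^\beta=\op{div}_y\Theta$ and using \eqref{Lipschitz1} for the Green function of $B_{4r}$ gives $\|\rho\|_{L^\infty}=O(\va L^{-1/2})$.
\item So $u_\va-u_0$ has size $\va$ on $B_r$ when $\chi_1^\beta\not\equiv0$, whereas Lemma \ref{local-approx-lambda} with $k=2$ would force $O(\va/L)$.
\end{itemize}
Carrying this example one order further in the two-scale expansion does the same to Lemma \ref{local-gradient-approx-lambda}. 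So the decay has to come from the inputs, as in your version. The decaying form of Proposition \ref{Green-convergence-I} that you use should be justified the same way, by bounding $\nabla G_{0,\lda}$ and $\nabla^2G_{0,\lda}$ with general $k$.

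The step you must write out carefully is the boundary-layer term. After the flux corrector absorbs the periodic part of $A\nabla\Phi_\va-\widehat A$, the identity $\cL_\va\bigl(\Phi_{\va,j}^\beta-P_j^\beta-\va\chi_j^\beta(x/\va)\bigr)=0$ leaves the non-divergence term $F=A(x/\va)\nabla\bigl(\Phi_\va-P-\va\chi(x/\va)\bigr)\nabla^2u_0$, with $|F|\leq C\min\{1,\va/\delta\}|\nabla^2u_0|$. Kept in divergence form, this term is of size $\min\{1,\va/\delta\}|\nabla u_0|$, not $O(\va)$. An $L^p$ bound ($p>n$) on $F$ at scale $t$ gives only $t^{1-1/p}\va^{1/p}\|\nabla^2u_0\|_{L^\infty}$, which is far too large once summed over $\va\le t\le r$. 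What makes each dyadic scale contribute $O(\va\|\nabla^2u_0\|_{L^\infty})$ is $\delta|F|\leq C\va|\nabla^2u_0|$ combined with Hardy's inequality in the energy comparison. In a Green-function representation, the equivalent fact is $|\nabla_1G(x,y)|\min\{1,\va/\delta(y)\}\leq C\min\{|x-y|^{1-n},\va|x-y|^{-n}\}$, by \eqref{Lipschitz1}--\eqref{Lipschitz2}.

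Two minor slips. The coefficient in front of $(\Phi_\va-P)\nabla^2u_0$ is $A(x/\va)$, not $\widehat A$. Also, with $r=|x_0-y_0|$ the pole lies on $\pa B_r(x_0)$, so your bound on $\nabla^2u_0$ cannot hold on $U_r(x_0)$; the radii must be shrunk, as in the paper.
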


Before proving these two propositions, we record two auxiliary estimates. The first is a boundary $L^p$ estimate for the non-tangential maximal function, and the second is a localized $L^\infty$ estimate with non-zero boundary data.

For a continuous function $u$ in $U$, we define its non-tangential maximal
function by
\[
(u)^*(y)
=
\sup\left\{
|u(x)|:\ x\in U\text{ and }|x-y|<C_0\op{dist}(x,\partial U)
\right\},
\quad y\in\partial U,
\]
where $C_0=C_0(U)>1$ is sufficiently large.

\begin{thm}[Non-tangential maximal function estimates]\label{NTmax-estimate}
Let $\va\in[0,1]$, $n\geq2$, and let $\lambda\in\Sigma_{\theta_0}\cup\{0\}$ with $\theta_0\in(0,\f{\pi}{2})$. Let $U\subset\R^n$ be a bounded $C^{1,\eta}$ domain with parameters $\cC_{1,\eta}(U)$, where $\eta\in(0,1)$. Assume that $A$ satisfies
\eqref{intro-symmetry}--\eqref{intro-holder}. Let $p\in(1,+\ift]$ and let $g\in L^p(\partial U;\C^m)$.
Let $u_{\va,\lambda}$ be the unique solution of the Dirichlet problem
\[
(\cL_\va-\lambda I)u_{\va,\lambda}=0
\quad\text{in }U,
\quad
u_{\va,\lambda}=g
\quad\text{on }\partial U,
\]
with the property $(u_{\va,\lambda})^*\in L^p(\partial U)$. Then
\[
\left\|(u_{\va,\lambda})^*\right\|_{L^p(\partial U)}
\leq
C\|g\|_{L^p(\partial U)}.
\]
In particular, when $p=+\ift$,
\be
\|u_{\va,\lambda}\|_{L^\infty(U)}
\leq
C\|g\|_{L^\infty(\partial U)}.
\label{MaximumPrinciple}
\ee
Here $C>0$ depends only on $\mu$, $n$, $m$, $\theta_0$, $\tau$, $\nu$, $\eta$, and
$\cC_{1,\eta}(U)$.
\end{thm}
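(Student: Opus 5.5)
The plan is to bypass the $\lambda=0$ layer-potential theory of Kenig--Lin--Shen and argue directly through the Poisson kernel of $\cL_\va-\lambda I$, whose size is controlled uniformly in $\va$ and $\lambda\in\Sigma_{\theta_0}\cup\{0\}$ by Proposition \ref{Green-gradient}.

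\textbf{Step 1: Poisson kernel and its size.} For $x\in U$ and $y\in\partial U$, define
\[
P_{\va,\lambda}(x,y)=-\,n_i(y)\,a_{ji}^{\beta\al}(y/\va)\,\pa_{y_j}G^{\cdot\beta}_{\va,\lambda}(x,y),
\]
the conormal derivative in the second variable, with $\widehat A$ in place of $A(\cdot/\va)$ when $\va=0$. The boundary trace of $\nabla_2G_{\va,\lambda}(x,\cdot)$ exists because, away from the pole, $G_{\va,\lambda}(x,\cdot)$ is Lipschitz up to $\partial U$ in a $C^{1,\eta}$ domain. This follows from Proposition \ref{Lip-lambda} applied through the duality relation.

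Estimate \eqref{Lipschitz3} with $k=1$ gives $|\nabla_2G_{\va,\lambda}(x,y)|\le C\delta(x)|x-y|^{-n}$ for $y$ near $\partial U$. Passing to the boundary yields
\[
|P_{\va,\lambda}(x,y)|\le \f{C\,\delta(x)}{|x-y|^{n}},
\qquad x\in U,\ y\in\partial U,
\]
uniformly in $\va$ and $\lambda$. The factor $(1+|\lambda||x-y|^2)^{-1}\le1$ is simply discarded.

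\textbf{Step 2: Representation formula.} For $g$ smooth, say the trace of a $C^{1,\eta}(\ol U)$ function $\tilde g$, I solve for $w=u_{\va,\lambda}-\tilde g$ with $H^1_0$ data $-(\cL_\va-\lambda I)\tilde g$. Green's representation from Proposition \ref{Green-ngeq3} or Theorem \ref{Green-n2}, followed by an integration by parts, gives
\[
u_{\va,\lambda}(x)=\int_{\partial U}P_{\va,\lambda}(x,y)g(y)\,\ud\sigma(y).
\]
This formula then serves as the definition of the solution for general $g\in L^p$, by density; Step 3 shows the integral converges absolutely.

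\textbf{Step 3: Maximal function bound.} Fix $z\in\partial U$ and $x$ with $|x-z|<C_0\delta(x)$. For every $y\in\partial U$ we have $|x-y|\ge \delta(x)$, and also $|x-y|\ge |y-z|-C_0\delta(x)$. Hence $|x-y|\ge c\max\{\delta(x),|y-z|\}$.

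Split $\partial U$ into the ball $|y-z|<\delta(x)$ and the dyadic annuli $2^{j-1}\delta(x)\le|y-z|<2^j\delta(x)$. This gives
\[
|u_{\va,\lambda}(x)|\le C\sum_{j\ge0}2^{-j}\fint_{\partial U\cap B_{2^j\delta(x)}(z)}|g|\,\ud\sigma\le C\,\mathcal M_{\partial U}g(z),
\]
where $\mathcal M_{\partial U}$ is the Hardy--Littlewood maximal operator on the boundary. Since $\mathcal M_{\partial U}$ is bounded on $L^p(\partial U)$ for $p\in(1,\ift]$, we obtain $\|(u_{\va,\lambda})^*\|_{L^p}\le C\|g\|_{L^p}$. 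The case $p=\ift$ is exactly \eqref{MaximumPrinciple}.

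\textbf{Step 4: Boundary values and uniqueness (expected main obstacle).} Two things remain.

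First, the constructed $u$ must attain $g$ nontangentially almost everywhere. This should follow from the smooth case together with the maximal bound, by the usual density argument.

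Second, and harder, uniqueness: if $(\cL_\va-\lambda I)u=0$, $(u)^*\in L^p$, and $u\to0$ nontangentially a.e., then $u\equiv0$. My plan is as follows.
\begin{itemize}[label=--]
\item Approximate $U$ from inside by $C^{1,\eta}$ domains $U_t$, obtained by pushing $\partial U$ inward along a transversal vector field with uniform $C^{1,\eta}$ character.
\item Write $u(x)=\int_{\partial U_t}P^{U_t}_{\va,\lambda}(x,y)u(y)\,\ud\sigma_t(y)$.
\item Pull the integral back to $\partial U$. The kernel bound of Step 1 holds uniformly in $t$, and $|u|$ on $\partial U_t$ is dominated by $(u)^*$ pulled back.
\item Use dominated convergence together with the convergence of $P^{U_t}_{\va,\lambda}(x,\cdot)$ as $t\to0$. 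The latter is obtained from the uniform Lipschitz estimates.
\item Conclude that $u(x)=0$.
\end{itemize}
The delicate point is the convergence of the Poisson kernels of the approximating domains. If this proves awkward, the fallback is to cite the $\lambda=0$ uniqueness in Kenig--Lin--Shen and treat $\lambda u$ as a source term. That term is harmless, since $(u)^*\in L^p$ implies $u\in L^{q}(U)$ for some $q>1$.
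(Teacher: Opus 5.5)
The paper does not prove this theorem; it quotes it from \cite[Theorem 5.2]{Wan23}. Your argument is correct and derives it directly from the gradient bound \eqref{Lipschitz3}. That bound is established earlier in Proposition \ref{Green-gradient} without using this theorem, so there is no circularity within the paper.

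Steps 1--3 are the classical Poisson-kernel argument of Avellaneda--Lin. The bound $|P_{\va,\lda}(x,y)|\le C\delta(x)|x-y|^{-n}$ gives $(u)^*\le C\mathcal M_{\pa U}g$. Uniformity in $\va$ and $\lda$ is inherited directly from \eqref{Lipschitz3} once the factor $(1+|\lda||x-y|^2)^{-1}$ is dropped. You also rightly avoid the $\lda=0$ shortcut $\int_{\pa U}P(x,y)\,\ud\sigma(y)=\mathrm{Id}$, which fails for $\lda\ne0$ because constants are no longer solutions. Instead you obtain nontangential convergence from the smooth case by density, which is correct. Compared with the citation, your route buys self-containedness: the only resolvent-dependent input is a single kernel bound already proved in the paper.

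Two remarks on Step 4.

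First, uniqueness is not optional. The estimate is asserted for the given solution $u$, and it reaches $u$ from your Poisson integral only through uniqueness.

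Second, you make uniqueness harder than it is: convergence of the Poisson kernels of $U_t$ is not needed. The argument runs as follows.
\begin{enumerate}
\item Take $\Lambda_t y=y+th(y)$ with $h$ a smooth inward transversal field. Then $U_t$ has uniformly controlled $C^{1,\eta}$ character.
\item Since $u\in C^{1,\alpha}(\ol{U_t})$, Step 2 applies on $U_t$.
\item The Step 1 bound on $U_t$ gives $|P^{U_t}_{\va,\lda}(x,\Lambda_ty)|\le C\delta(x)^{1-n}$ for fixed $x$, small $t$, and all $y\in\pa U$.
\item For $C_0$ large, $\Lambda_ty$ lies in the cone at $y$, so $|u(\Lambda_ty)|\le(u)^*(y)$.
\item Hence $|u(x)|\le C\delta(x)^{1-n}\int_{\pa U}|u(\Lambda_ty)|\,\ud\sigma(y)$, using that the surface Jacobian of $\Lambda_t$ is uniformly bounded.
\item The right-hand side tends to $0$ by dominated convergence, since $(u)^*\in L^1(\pa U)$ and $u(\Lambda_ty)\to0$ for a.e.\ $y$.
\end{enumerate}

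Your fallback, by contrast, is less harmless than you claim. Reducing to $\lda=0$ requires $v=\int_U G_{\va,0}(\cdot,y)\,\lda u(y)\,\ud y$ to satisfy $(v)^*\in L^p$ and to vanish nontangentially a.e. Knowing only $u\in L^{pn/(n-1)}(U)$ does not give this without a further bootstrap. Use the direct argument above instead.
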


\begin{proof}
This is \cite[Theorem 5.2]{Wan23}.
\end{proof}

The following localized estimate will be used in the approximation argument near the boundary. It reduces non-zero boundary data to the homogeneous case by means of the preceding non-tangential maximal function estimate.

\begin{lem}[Localized $L^\infty$ estimate]\label{local-Linfty-lambda}
Let $\va\in[0,1]$, $n\geq2$, and let $\lambda\in\Sigma_{\theta_0}\cup\{0\}$ with $\theta_0\in(0,\f{\pi}{2})$. Let $U\subset\R^n$ be a bounded $C^{1,1}$ domain with parameters $\cC_{1,1}(U)$. Assume that $A$ satisfies \eqref{intro-symmetry}--\eqref{intro-holder}. Let $x_0\in\ol U$ and $r\in(0,\f{r_0}{3})$.

When $T_{3r}^{U}(x_0)\neq\emptyset$, assume that
$u_{\va,\lambda}\in H^1(U_{3r}(x_0);\C^m)$ is a weak solution of
\[
\left\{
\begin{aligned}
(\cL_\va-\lambda I)u_{\va,\lambda}
&=0
&&\text{in }U_{3r}(x_0),
\\
u_{\va,\lambda}&=f
&&\text{on }T_{3r}^{U}(x_0),
\end{aligned}
\right.
\]
with $\|f\|_{L^\infty(T_{3r}^{U}(x_0))}<+\ift$.
If $T_{3r}^{U}(x_0)=\emptyset$, assume that $B_{3r}(x_0)\subset U$ and
$u_{\va,\lambda}\in H^1(B_{3r}(x_0);\C^m)$ is a weak solution of
\[
(\cL_\va-\lambda I)u_{\va,\lambda}=0
\quad\text{in }B_{3r}(x_0).
\]
Then, for any $ k\in\Z_+$,
\[
\|u_{\va,\lambda}\|_{L^\infty(U_r(x_0))}
\leq
\f{C(Ck)^{2k}}
{(1+|\lambda|r^2)^k}
\left\{
\fint_{U_{3r}(x_0)}|u_{\va,\lambda}|
+
\|f\|_{L^\infty(T_{3r}^{U}(x_0))}
\right\}.
\]
In the interior case, $U_\rho(x_0)$ is replaced by $B_\rho(x_0)$ and the boundary term involving $f$ is omitted. Here $C>0$ depends only on
$\mu$, $n$, $m$, $\theta_0$, $\tau$, $\nu$, and $\cC_{1,1}(U)$.
\end{lem}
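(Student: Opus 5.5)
The plan is to split $u_{\va,\lambda}=v+w$ on a domain slightly smaller than $U_{3r}(x_0)$. Here $w$ carries the boundary data and $v$ has zero data on the boundary portion, so that the homogeneous-data estimate \eqref{Linftyestimate} of Proposition \ref{Holder-Linfty-data} applies to $v$ and Theorem \ref{NTmax-estimate} controls $w$. The interior case needs no splitting: apply \eqref{Linftyestimate} with $s=1$, $F=0$, $f=0$ on $B_{r}(x_0)\subset B_{2r}(x_0)\subset B_{3r}(x_0)$.

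\textbf{Step 1: an auxiliary domain.} In the boundary case I would fix a bounded $C^{1,1}$ domain $D$ with $U_{5r/2}(x_0)\subset D\subset U_{3r}(x_0)$, obtained by smoothing the corners of $U\cap B_{s}(x_0)$ for some $s\in(\f{5r}{2},3r)$. Its $C^{1,1}$ character after rescaling to unit size should depend only on $\cC_{1,1}(U)$, since $r<r_0/3$ and the boundary is a Lipschitz graph there. I would then pick the radius $s$ by averaging (Fubini) so that $\int_{\pa B_s\cap U}|u_{\va,\lambda}|\,\ud\sigma\leq Cr^{-1}\int_{U_{3r}}|u_{\va,\lambda}|$.

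\textbf{Step 2: control of $w$.} Let $w$ solve $(\cL_\va-\lambda I)w=0$ in $D$ with $w=u_{\va,\lambda}$ on $\pa D$. By Theorem \ref{NTmax-estimate} with $p=\infty$ and scaling, \eqref{MaximumPrinciple} would control $w$ on $\pa D\cap\pa U$ by $\|f\|_{L^\infty}$. The difficulty is that on the interior part $\pa D\cap U$ we only know $u_{\va,\lambda}$ in $L^1$. I would therefore first upgrade: apply \eqref{Linftyestimate} (interior version, with $s=1$) on small balls covering $\{x\in U_{3r}: \dist(x,\pa U)\gtrsim r\}$, and near the corners use boundedness already derived at a slightly larger scale. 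Alternatively, and more simply, use $L^p$ nontangential estimates with $p$ close to $1$ on the interior portion together with the Fubini choice of $s$. Combining these gives $\|w\|_{L^\infty(U_{2r})}\leq C\{\fint_{U_{3r}}|u_{\va,\lambda}|+\|f\|_{L^\infty}\}$, without decay in $\lambda$.

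\textbf{Step 3: recovering the decay factor.} This is the main obstacle: $w$ itself need not decay in $|\lambda| r^2$, since near $T^U$ the function $u_{\va,\lambda}$ is of size $f$. My first attempt is instead to make $D$ coincide with $U_{3r}$ near $T^U_{2r}$, so that $v=u_{\va,\lambda}-w$ vanishes on $T^U_{2r}(x_0)$. Lemma \ref{Caccio-data} and \eqref{Linftyestimate} applied to $v$ then give the factor $(Ck)^{2k}(1+|\lambda|r^2)^{-k}$ times $\fint|v|\leq\fint|u_{\va,\lambda}|+\|w\|_{L^\infty}$, and $\|w\|_{L^\infty}$ is bounded by the data as in Step 2. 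The leftover $w$-term lacks decay, so the result in this form holds only with the $f$-term undamped; to also damp it, I would choose $w$ as the solution in $U$ itself (extending $f$ by zero off $T^U_{3r}$). Then the resolvent kernel bounds of Propositions \ref{Green-ngeq3} and \ref{Green-gradient} give $|w(x)|$ a factor $(1+|\lambda|r^2)^{-k}$ away from the support, while near the support the Poisson kernel of the resolvent still decays in $|\lambda|\dist^2$. If this fails, I expect the intended statement to be used only in regimes where $|\lambda| r^2\lesssim1$ for the $f$-term, where the bound is immediate from Steps 1--2.
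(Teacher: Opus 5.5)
\textbf{The boundary case cannot be proved as stated.} No choice of $w$ will close Step~3, because with $\|f\|_{L^\infty(T^U_{3r}(x_0))}$ multiplied by $(1+|\lambda|r^2)^{-k}$ the estimate is false for every $k\in\Z_+$.

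\emph{Counterexample.} Take $A\equiv I$, so that $\cL_\va=-\Delta$ for every $\va\in[0,1]$. Take $m=1$ and $\lambda=-L\in\Sigma_{\theta_0}$ with $L>0$. Fix $U$ and $r$ so that $\pa U$ has a flat piece with $U_{3r}(0)=B_{3r}\cap\{x_n>0\}$, and set $x_0=0$. Then $u(x)=e^{-\sqrt{L}\,x_n}$ solves $(-\Delta+L)u=0$ with $f\equiv1$. Moreover $\|u\|_{L^\infty(U_r(0))}=1$ and $\fint_{U_{3r}(0)}|u|\leq C_n(\sqrt{L}\,r)^{-1}$. So for $k=1$ the right-hand side tends to $0$ as $L\to\infty$, while the left-hand side stays equal to $1$.

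This is also why your Poisson-kernel idea fails. Decay in $|\lambda|\dist(x,\pa U)^2$ gives nothing at points of $U_r(x_0)$ within distance $|\lambda|^{-1/2}$ of $T^U_r(x_0)$. There the solution is close to $f$, and the $L^\infty(U_r(x_0))$ norm sees those points.

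\emph{The paper's proof has the same flaw in its last step.} It bounds the auxiliary solution by $C\|f\|_{L^\infty}$ via \eqref{MaximumPrinciple} and applies the zero-data estimate to the difference. ``Combining'' these two bounds only yields
\[
\|u_{\va,\lambda}\|_{L^\infty(U_r(x_0))}
\leq
\frac{C(Ck)^{2k}}{(1+|\lambda|r^2)^k}\Big\{\fint_{U_{3r}(x_0)}|u_{\va,\lambda}|+\|f\|_{L^\infty(T^U_{3r}(x_0))}\Big\}
+C\|f\|_{L^\infty(T^U_{3r}(x_0))}.
\]
This version, with an undamped $f$-term, is the correct statement. It agrees with the stated one when $f\equiv0$ or $|\lambda|r^2\lesssim1$, as you guessed. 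It is what your Step~3 first attempt gives once the splitting is fixed as described below. Any use of the lemma with nonzero boundary data must take the missing decay from the size of $f$ itself.

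\textbf{Your Steps 1--2 rest on a vacuous splitting.} If $w$ solves the equation in $D$ with $w=u_{\va,\lambda}$ on all of $\pa D$, then uniqueness for the coercive Dirichlet problem gives $w\equiv u_{\va,\lambda}$ and $v\equiv0$. That is why you needed pointwise control of $u_{\va,\lambda}$ on $\pa D\cap U$ from $L^1$ information. A Fubini choice of radius cannot supply it, since Theorem \ref{NTmax-estimate} needs $p>1$. For the same reason, your Step~3 first attempt as written loses the decay on the $\fint|u_{\va,\lambda}|$ term as well.

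\emph{The splitting that works is the paper's.} Take $w=f$ on $\pa D\cap\pa U$ and $w=0$ on $\pa D\cap U$. Then:
\begin{enumerate}
\item \eqref{MaximumPrinciple}, rescaled and uniform in $\lambda\in\Sigma_{\theta_0}$, gives $\|w\|_{L^\infty(D)}\leq C\|f\|_{L^\infty}$.
\item $v=u_{\va,\lambda}-w$ vanishes on $T^U_{5r/2}(x_0)$.
\item \eqref{Linftyestimate} with $s=1$ on $U_r(x_0)\subset U_{2r}(x_0)\subset D$ gives the damped bound for $v$ in terms of $\fint_{U_{3r}(x_0)}|u_{\va,\lambda}|+\|f\|_{L^\infty}$.
\end{enumerate}
No radius selection or interior upgrade is needed.

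Your requirement $U_{5r/2}(x_0)\subset D$ is the right one. The paper's $U_r(x_0)\subset\wt U\subset U_{2r}(x_0)$ is too tight to apply Proposition \ref{Holder-Linfty-data} on all of $U_r(x_0)$, since that proposition needs the equation on $U_{2r}(x_0)$. Your interior case is correct as written.
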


\begin{proof}
In the interior case, or in the boundary case with $f\equiv0$, the estimate follows from Proposition \ref{Holder-Linfty-data}, with $s=1$. We therefore consider the boundary case with non-zero boundary data.

Choose a bounded $C^{1,1}$ domain $\widetilde U$ such that
\[
U_r(x_0)\subset \widetilde U\subset U_{2r}(x_0)
\]
and such that the $C^{1,1}$ character of $\widetilde U$ is controlled by $\cC_{1,1}(U)$. Let $v_{\va,\lambda}$ solve
\[
(\cL_\va-\lambda I)v_{\va,\lambda}=0
\quad\text{in }\widetilde U,
\]
with boundary values
\[
v_{\va,\lambda}=f
\quad\text{on }\partial\widetilde U\cap\partial U,
\quad
v_{\va,\lambda}=0
\quad\text{on }\partial\widetilde U\backslash\partial U.
\]
By \eqref{MaximumPrinciple},
\[
\|v_{\va,\lambda}\|_{L^\infty(\widetilde U)}
\leq
C\|v_{\va,\lambda}\|_{L^\infty(\partial\widetilde U)}
\leq
C\|f\|_{L^\infty(T_{3r}^{U}(x_0))}.
\]
Since $u_{\va,\lambda}-v_{\va,\lambda}$ has zero boundary data on
$\partial\widetilde U\cap\partial U$, the homogeneous case gives
\[
\begin{aligned}
\|u_{\va,\lambda}-v_{\va,\lambda}\|_{L^\infty(U_r(x_0))}
&\leq
\f{C(Ck)^{2k}}
{(1+|\lambda|r^2)^k}
\fint_{\widetilde U}
|u_{\va,\lambda}-v_{\va,\lambda}|
\\
&\leq
\f{C(Ck)^{2k}}
{(1+|\lambda|r^2)^k}
\left\{
\fint_{U_{3r}(x_0)}|u_{\va,\lambda}|
+
\|f\|_{L^\infty(T_{3r}^{U}(x_0))}
\right\}.
\end{aligned}
\]
Combining this estimate with the bound for $v_{\va,\lambda}$ proves the desired estimate.
\end{proof}

We now state the local approximation estimate that compares a resolvent solution with its homogenized counterpart. This is the main local input for the zeroth-order convergence of Green's functions.

\begin{lem}[Localized approximation estimate]\label{local-approx-lambda}
Let $\va\in(0,1]$, $n\geq2$, and let
$\lambda\in\Sigma_{\theta_0}\cup\{0\}$ with
$\theta_0\in(0,\f{\pi}{2})$. Let $U\subset\R^n$ be a bounded
$C^{1,1}$ domain with parameters $\cC_{1,1}(U)$. Assume that $A$
satisfies \eqref{intro-symmetry}--\eqref{intro-holder}. Let $x_0\in\ol U$,
$r\in(0,\f{r_0}{4})$, and let $p\in(n,+\ift)$. Assume also that $\va\in(0,r)$.

Assume that
\[
u_{\va,\lambda}\in H^1(U_{4r}(x_0);\C^m),
\quad
u_{0,\lambda}\in W^{2,p}(U_{4r}(x_0);\C^m).
\]
When $T_{4r}^{U}(x_0)\neq\emptyset$, assume that
\[
\left\{
\begin{aligned}
(\cL_\va-\lambda I)u_{\va,\lambda}
&=
(\cL_0-\lambda I)u_{0,\lambda}
&&\text{in }U_{4r}(x_0),
\\
u_{\va,\lambda}&=u_{0,\lambda}
&&\text{on }T_{4r}^{U}(x_0).
\end{aligned}
\right.
\]
If $T_{4r}^{U}(x_0)=\emptyset$, assume that $B_{4r}(x_0)\subset U$ and
\[
(\cL_\va-\lambda I)u_{\va,\lambda}
=
(\cL_0-\lambda I)u_{0,\lambda}
\quad\text{in }B_{4r}(x_0).
\]
Then, for any $ k\in\Z_+$,
\be
\begin{aligned}
&\|u_{\va,\lambda}-u_{0,\lambda}\|_{L^\infty(U_r(x_0))}\\
&\quad\leq
\f{C(Ck)^{2k}}
{(1+|\lambda|r^2)^k}
\fint_{U_{4r}(x_0)}
|u_{\va,\lambda}-u_{0,\lambda}|
\\
&\quad\quad
+
\f{C(Ck)^{2k}\va}
{(1+|\lambda|r^2)^k}
\left\{
r^{1-\f{n}{p}}
\|\nabla^2 u_{0,\lambda}\|_{L^p(U_{4r}(x_0))}+
(1+|\lambda|r^2)
\|\nabla u_{0,\lambda}\|_{L^\infty(U_{4r}(x_0))}
\right\}.
\end{aligned}
\label{local-approx-lambdaineq}
\ee
In the interior case, each local set $U_s(x_0)$ is replaced by $B_s(x_0)$. Here $C>0$ depends only on
$\mu$, $n$, $m$, $\theta_0$, $p$, $\tau$, $\nu$, and $\cC_{1,1}(U)$.
\end{lem}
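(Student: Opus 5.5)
The plan is the standard two-scale expansion with a boundary cut-off, localized to $U_{4r}(x_0)$, with the $k$-dependence carried along by Lemma \ref{Caccio-data} and Lemma \ref{local-Linfty-lambda}. Put $w=u_{\va,\lambda}-u_{0,\lambda}$. Since $w$ solves $(\cL_\va-\lambda I)w=(\cL_0-\cL_\va)u_{0,\lambda}$, we need a corrector term to rewrite the right-hand side as $O(\va)$ in divergence form. Fix a cut-off $\eta_\va\in C_0^\infty(B_{3r}(x_0))$ with $\eta_\va\equiv1$ on $B_{2r}(x_0)\cap\{\delta(x)\geq 4\va\}$, $\eta_\va=0$ where $\delta(x)<2\va$, and $|\nabla\eta_\va|\leq C\va^{-1}+Cr^{-1}$. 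Let $S_\va$ be the standard $\va$-smoothing operator. Define
\[
w_\va=u_{\va,\lambda}-u_{0,\lambda}-\va\chi_j^\beta(x/\va)\,S_\va\big(\eta_\va\pa_ju_{0,\lambda}^\beta\big).
\]
The corrector vanishes near $T^U_{4r}(x_0)$ and near $\pa B_{3r}(x_0)$. Using the flux corrector $\phi_{kij}$, with $b_{ij}=\widehat a_{ij}+ \dots$ written as $\pa_k\phi_{kij}$, the usual computation (as in \cite[Chapter 3]{She18}, with the extra term $\lambda\va\chi S_\va(\cdot)$) gives on $U_{3r}(x_0)$
\[
(\cL_\va-\lambda I)w_\va=\op{div}(f)+F,
\]
where
\[
|f|\lesssim |1-\eta_\va||\nabla u_{0,\lambda}|+\va|\nabla(\eta_\va\nabla u_{0,\lambda})|_{S_\va}+|\nabla u_{0,\lambda}-S_\va(\eta_\va\nabla u_{0,\lambda})|,\qquad |F|\lesssim \va|\lambda||S_\va(\eta_\va\nabla u_{0,\lambda})|.
\]
Here $\chi,\phi$ are bounded by the Hölder assumption \eqref{intro-holder}.

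Next, I would estimate $w_\va$ in $L^\infty(U_r(x_0))$. Split $w_\va=w^{(1)}+w^{(2)}$ on an intermediate $C^{1,1}$ domain $\widetilde U$ with $U_{2r}\subset\widetilde U\subset U_{3r}$. Take $w^{(2)}$ solving the same equation with zero data on $\pa\widetilde U$; then $w^{(1)}$ is an $(\cL_\va-\lambda I)$-solution with data $w_\va$ and $w^{(1)}=0$ on $T^U$. For $w^{(2)}$, represent it through the Green's function of $\widetilde U$, using Proposition \ref{Green-gradient}, in particular the boundary-decay bound $|\nabla_2 G(x,y)|\lesssim \delta(x)\ldots$ together with \eqref{Lipschitz1}. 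The $f$-terms supported in the layer $\{\delta<4\va\}$ have size $\|\nabla u_{0,\lambda}\|_\infty$ on a set of width $\va$, so they contribute $C\va\|\nabla u_{0,\lambda}\|_{L^\infty}$. This holds because $\int_{\delta(y)<4\va}|x-y|^{1-n}\,dy\lesssim \va\log$ is not enough by itself; I will use instead the $L^\infty$ duality with the $W_0^{1,1}$ bound of $G(x,\cdot)$ near the boundary, which is layer-integrable with bound $C\va$ by \eqref{Lipschitz1}. The interior $f$-terms are bounded by $\va\|\nabla^2u_{0,\lambda}\|_{L^p}$ through Hölder with $\nabla_2G\in L^{p'}$, yielding $r^{1-n/p}$. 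The $F$ term gives $\va|\lambda|r^2\|\nabla u_{0,\lambda}\|_\infty$. For $w^{(1)}$, apply Lemma \ref{local-Linfty-lambda} with the $k$-gain $(Ck)^{2k}(1+|\lambda|r^2)^{-k}$, bounding $\fint|w_\va|\le\fint|w|+C\va\|\nabla u_{0,\lambda}\|_\infty$.

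The main obstacle is that $w^{(2)}$ carries no decay factor $(1+|\lambda|r^2)^{-k}$. I would fix this by doing the whole argument at scale $\rho=r/(2k)$ around each point, or more simply by first deriving the estimate with a fixed polynomial loss $(1+|\lambda|r^2)^{N}$ and then iterating. The plan is to apply the $k$-free estimate on $U_{r}$, which produces $(1+|\lambda|r^2)$ factors, and then reduce $\fint_{U_{2r}}|w|$ via Lemma \ref{Caccio-data} applied to $w$. Here $w$ solves the equation with right-hand side $\op{div}((\widehat A-A_\va)\nabla u_{0,\lambda})$, so this costs only $\|\nabla u_{0,\lambda}\|_\infty$ with the factor $\va$ after a second use of the corrector. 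I regard this decay bookkeeping, together with the boundary-layer estimate, as the delicate part. Finally, $\|u_{\va,\lambda}-u_{0,\lambda}\|_{L^\infty(U_r)}\le\|w_\va\|_\infty+C\va\|\nabla u_{0,\lambda}\|_\infty$.
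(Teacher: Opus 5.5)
There is a genuine gap, and it is the one you flag yourself as the main obstacle. It cannot be closed: no argument can put the factor $(1+|\lambda|r^2)^{-k}$ in front of the $\va$-terms of \eqref{local-approx-lambdaineq}, because for every $k\geq2$ that inequality is false.

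Here is a counterexample in the interior case. Fix $x_0,r$ with $B_{4r}(x_0)\subset U$, and take $u_{0,\lambda}=P_j^\beta$ and $\lambda=-\va^{-2}\in\Sigma_{\theta_0}$. Set $u_{\va,\lambda}(x)=P_j^\beta(x)+\va W(x/\va)$, where $W\in H^1(\mathbb T^n;\R^m)$ solves $-\op{div}(A\nabla W)+W=\op{div}(A\nabla P_j^\beta)$ on the torus. A direct computation gives $(\cL_\va-\lambda I)u_{\va,\lambda}=-\lambda P_j^\beta=(\cL_0-\lambda I)u_{0,\lambda}$, and $W\not\equiv0$ as soon as $\chi_j^\beta\not\equiv0$. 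Here $\nabla^2u_{0,\lambda}=0$ and $|\nabla u_{0,\lambda}|=1$. For $r>\sqrt n\,\va$ the left-hand side equals $\va\|W\|_{L^\infty}$. The right-hand side is at most $C(Ck)^{2k}\va\{\|W\|_{L^\infty}(\va/r)^{2k}+(1+r^2\va^{-2})^{1-k}\}$, which is $o(\va)$ as $\va\to0$ with $k\geq2$ fixed.

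Your construction shows where the decay is lost. The term $\va\chi(x/\va)S_\va(\eta_\va\nabla u_{0,\lambda})$ that you add back at the end, and the zero-data part $w^{(2)}$, are genuinely of size $\va|\nabla u_{0,\lambda}|$ uniformly in $|\lambda|r^2$. The proposed iteration does not help. Lemma \ref{Caccio-data} applied to $w$ with $f=(\widehat A-A_\va)\nabla u_{0,\lambda}$ yields the source contribution $C(Ck)^{2k}r(\fint|f|^2)^{1/2}$, which carries neither the factor $\va$ nor any decay. Re-inserting the corrector only brings back a non-decaying $\va\|\nabla u_{0,\lambda}\|_{L^\infty}$.

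The paper's proof is only a sketch following \cite[Lemma 5.4]{Wan23}. It uses the Dirichlet corrector $(\Phi_{\va,j}^\beta-P_j^\beta)\pa_ju_{0,\lambda}^\beta$, which vanishes on $T^U_{4r}(x_0)$ and so avoids your boundary cut-off and smoothing, and it treats the homogeneous part by Lemma \ref{local-Linfty-lambda}. It then simply asserts that the corrector terms also acquire $(1+|\lambda|r^2)^{-k}$, which fails for the same reason.

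What your decomposition does prove is a weaker estimate: $C(Ck)^{2k}(1+|\lambda|r^2)^{-k}$ multiplies only $\fint_{U_{4r}(x_0)}|u_{\va,\lambda}-u_{0,\lambda}|$, and the $\va$-terms appear without that factor. This still serves the later Green's-function estimates, provided the decay of the $\va$-terms is taken from $\nabla_yG_{0,\lambda}$ and $\nabla_y^2G_{0,\lambda}$ themselves. Use Proposition \ref{Green-gradient} and Lemma \ref{L0-second} with general $k$, then optimize via Lemma \ref{lem-special-k}, rather than relying on this lemma.

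Even for that weaker version, two repairs are needed:
\begin{enumerate}
\item With $\eta_\va\equiv1$ only on $B_{2r}(x_0)$ and $\widetilde U\supset U_{2r}(x_0)$, the source contains the $O(1)$ term $(A_\va-\widehat A)\nabla u_{0,\lambda}$ on $\widetilde U\backslash B_{2r}(x_0)$. So $\eta_\va$ must equal $1$ on all of $\widetilde U$ outside the $\va$-layer.
\item The layer bound $\int_{\widetilde U\cap\{\delta<4\va\}}|\nabla_yG(x,y)|\ud y\leq C\va$ requires \eqref{Lipschitz3} in addition to \eqref{Lipschitz1}. The latter alone gives only $C\va\log(r/\va)$ for $x$ near the boundary.
\end{enumerate}
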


\begin{proof}
The proof follows from the argument of \cite[Lemma 5.4]{Wan23}. We indicate the modifications needed to keep track of the dependence on $\lambda$ and on the integer $k$.

In the proof of \cite[Lemma 5.4]{Wan23}, the local $L^\infty$ estimate is applied to the comparison error on a pair of comparable domains. In the present resolvent setting, this step is replaced by Lemma \ref{local-Linfty-lambda}. Therefore, each occurrence of the constant $C_{k,\theta_0}$ in \cite[Lemma 5.4]{Wan23} is replaced by
\[
\f{C(Ck)^{2k}}{(1+|\lambda|r^2)^k}.
\]
This gives the first term in \eqref{local-approx-lambdaineq}.

The corrector error terms are estimated as in \cite[Lemma 5.4]{Wan23}, using the Dirichlet corrector estimates and the Green-function bounds in Proposition \ref{Green-gradient}. With the above replacement of $C_{k,\theta_0}$, these terms give
\[
\f{C(Ck)^{2k}\va}
{(1+|\lambda|r^2)^k}
r^{1-\f{n}{p}}
\|\nabla^2 u_{0,\lambda}\|_{L^p(U_{4r}(x_0))}.
\]
The additional zero-order term involving $\lambda$ is controlled by
\[
\f{C(Ck)^{2k}\va}
{(1+|\lambda|r^2)^k}
|\lambda|r^2
\|\nabla u_{0,\lambda}\|_{L^\infty(U_{4r}(x_0))}.
\]
Combining this with the usual first-order corrector contribution
\[
\f{C(Ck)^{2k}\va}
{(1+|\lambda|r^2)^k}
\|\nabla u_{0,\lambda}\|_{L^\infty(U_{4r}(x_0))}
\]
yields the last term in \eqref{local-approx-lambdaineq}. This proves the desired estimate. The interior case is identical, with $U_s(x_0)$ replaced by $B_s(x_0)$.
\end{proof}

The first-order convergence estimate requires a refined local approximation in which the oscillatory gradient is compared with the homogenized gradient multiplied by the Dirichlet corrector.

\begin{lem}[Localized first-order approximation estimate]\label{local-gradient-approx-lambda}
Let $\va\in(0,1]$, $n\geq2$, and let
$\lambda\in\Sigma_{\theta_0}\cup\{0\}$ with
$\theta_0\in(0,\f{\pi}{2})$. Let $U\subset\R^n$ be a bounded
$C^{2,1}$ domain with parameters $\cC_{2,1}(U)$. Assume that $A$
satisfies \eqref{intro-symmetry}--\eqref{intro-holder}. Let $x_0\in\ol U$,
$r\in(0,\f{r_0}{4})$, and let $\rho\in(0,1)$. Assume also that $0<\va<r$.

Assume that
\[
u_{\va,\lambda}\in H^1(U_{4r}(x_0);\C^m),
\quad
u_{0,\lambda}\in C^{2,\rho}(U_{4r}(x_0);\C^m).
\]
When $T_{4r}^{U}(x_0)\neq\emptyset$, assume that
\[
\left\{
\begin{aligned}
(\cL_\va-\lambda I)u_{\va,\lambda}
&=
(\cL_0-\lambda I)u_{0,\lambda}
&&\text{in }U_{4r}(x_0),
\\
u_{\va,\lambda}&=u_{0,\lambda}
&&\text{on }T_{4r}^{U}(x_0).
\end{aligned}
\right.
\]
If $T_{4r}^{U}(x_0)=\emptyset$, assume that $B_{4r}(x_0)\subset U$ and
\[
(\cL_\va-\lambda I)u_{\va,\lambda}
=
(\cL_0-\lambda I)u_{0,\lambda}
\quad\text{in }B_{4r}(x_0).
\]
Then, for any $1\leq\al\leq m$ and any $ k\in\Z_+$,
\be
\begin{aligned}
&\left\|
\f{\partial u_{\va,\lambda}^{\al}}{\partial x_i}
-
\f{\partial\Phi_{\va,j}^{\al\beta}}{\partial x_i}
\f{\partial u_{0,\lambda}^{\beta}}{\partial x_j}
\right\|_{L^\infty(U_r(x_0))}
\\
&\quad\leq
\f{C(Ck)^{2k}}
{(1+|\lambda|r^2)^k r}
\fint_{U_{4r}(x_0)}
|u_{\va,\lambda}-u_{0,\lambda}|
\\
&\quad\quad
+
\f{C(Ck)^{2k}\va}
{(1+|\lambda|r^2)^k}
\Big\{
(1+|\lambda|r^2)r^{-1}
\|\nabla u_{0,\lambda}\|_{L^\infty(U_{4r}(x_0))}
\\
&\quad\quad\quad\quad\quad\quad+\log\left(\va^{-1}r+2\right)
\|\nabla^2 u_{0,\lambda}\|_{L^\infty(U_{4r}(x_0))}
+r^\rho
[\nabla^2 u_{0,\lambda}]_{C^{0,\rho}(U_{4r}(x_0))}
\Big\}.
\end{aligned}
\label{local-gradient-approx-lambdaineq}
\ee
In the interior case, each local set $U_s(x_0)$ is replaced by $B_s(x_0)$. Here and throughout, the repeated indices $\beta$ and $j$ are summed over. The constant $C>0$ depends only on
$\mu$, $n$, $m$, $\theta_0$, $\tau$, $\nu$, $\rho$, and
$\cC_{2,1}(U)$.
\end{lem}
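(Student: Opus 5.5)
We would follow the scheme of the zeroth-order estimate, Lemma \ref{local-approx-lambda}, which adapts \cite[Lemma 5.4]{Wan23}, and prove the gradient statement from a two-scale expansion with Dirichlet correctors. Fix $x_0$ and $r$ as in the statement, take the boundary case (the interior case only needs $B_s(x_0)$ in place of $U_s(x_0)$), and set $x_0=0$.

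Let $\Phi_{\va}$ be the Dirichlet corrector of the small domain $\widetilde U$ with $U_{2r}\subset\widetilde U\subset U_{3r}$ chosen as in the proof of Lemma \ref{local-Linfty-lambda}, and put
\[
w_\va=u_{\va,\lambda}-u_{0,\lambda}-\big(\Phi_{\va,j}^\beta-P_j^\beta\big)\f{\pa u_{0,\lambda}^\beta}{\pa x_j}.
\]
Then $w_\va=u_{\va,\lambda}-u_{0,\lambda}$ on $T^U_{2r}$, which is zero there, and by the definition of $\Phi_{\va}$ we have $\f{\pa u^\al_{\va,\lambda}}{\pa x_i}-\f{\pa\Phi^{\al\beta}_{\va,j}}{\pa x_i}\f{\pa u^\beta_{0,\lambda}}{\pa x_j}=\f{\pa w^\al_\va}{\pa x_i}+(\Phi_{\va,j}^{\al\beta}-P_j^{\al\beta})\f{\pa^2u_{0,\lambda}^\beta}{\pa x_i\pa x_j}$. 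The second term is bounded by $\|\Phi_\va-P\|_{L^\infty}\|\nabla^2u_{0,\lambda}\|_{L^\infty}\le C\va\|\nabla^2u_{0,\lambda}\|_{L^\infty}$, using the standard bound $\|\Phi_\va-P\|_{L^\infty}\le C\va$ on a $C^{1,1}$ domain. The task is therefore to bound $\|\nabla w_\va\|_{L^\infty(U_r)}$.

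We would compute $(\cL_\va-\lambda I)w_\va$ using $\cL_\va\Phi_\va=0$:
\[
(\cL_\va-\lambda I)w_\va=\op{div}\big(\va B(x/\va)\nabla^2u_{0,\lambda}\big)+\op{div}\big(A_\va(\Phi_\va-P)\nabla^2u_{0,\lambda}\big)+A_\va\nabla\Phi_\va\cdot\nabla^2u_{0,\lambda}+\lambda(\Phi_\va-P)\nabla u_{0,\lambda}.
\]
Here $B$ is the flux corrector, and the terms are rearranged in the usual way (as in Kenig--Lin--Shen) so that the source is a divergence of an $O(\va)\nabla^2u_{0,\lambda}$ quantity plus the $\lambda$-term. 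The $\lambda$-term is bounded by $C\va|\lambda|\|\nabla u_{0,\lambda}\|_{L^\infty}$.

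With the homogeneous boundary condition on $T^U_{2r}$, we then apply Proposition \ref{Lip-lambda} in its divergence-form version, i.e.\ the uniform Lipschitz estimate with a $\op{div}(f)$ right-hand side, $f$ in $C^{0,\rho}$. This requires Hölder control of $f$. Write $f$ as a product of a periodic oscillating factor and $\nabla^2u_{0,\lambda}$. Since the Lipschitz bound for a divergence term behaves like $\|f\|_{L^\infty}\log(2+r/\va)+\va^{\rho}[f]$-type quantities at scale $\va$ and $r$, it produces exactly:
\begin{itemize}
\item the logarithm $\va\log(\va^{-1}r+2)\|\nabla^2u_{0,\lambda}\|_{L^\infty}$;
\item the Hölder term $\va r^\rho[\nabla^2u_{0,\lambda}]_{C^{0,\rho}}$;
\item the term $C\va(1+|\lambda|r^2)r^{-1}\|\nabla u_{0,\lambda}\|_{L^\infty}$ from the zero-order pieces and the cut-off.
\end{itemize}

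The term $r^{-1}\fint|w_\va|$ from the Lipschitz estimate is split into $r^{-1}\fint|u_{\va,\lambda}-u_{0,\lambda}|$ plus $C\va r^{-1}\|\nabla u_{0,\lambda}\|_{L^\infty}$. The decay factor $C(Ck)^{2k}(1+|\lambda|r^2)^{-k}$ comes, as in Lemma \ref{local-approx-lambda}, from applying the Caccioppoli iteration of Lemma \ref{Caccio-data} between radii $2r$ and $4r$ with $k+N_0$ in place of $k$, which absorbs the polynomial loss $(1+|\lambda|r^2)^{N_0}$.

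The main obstacle is the corrector term near the boundary. The Dirichlet corrector gradient $\nabla\Phi_\va$ is bounded, by the Avellaneda--Lin boundary Lipschitz estimate, but $\nabla(\Phi_\va-P-\va\chi(x/\va))$ is only $O(1)$ in a layer of width $\va$. Controlling $\nabla w_\va$ uniformly up to the boundary therefore needs the refined estimate
\[
|\nabla(\Phi_\va-P-\va\chi(\cdot/\va))|\le C\min\{1,\va\log(2+\va^{-1}r)/\delta\}
\]
in $C^{2,1}$ domains. Our first attempt would be to handle the boundary layer through the Green representation of $w_\va$ on $\widetilde U$, using \eqref{Lipschitz1}–\eqref{Lipschitz3}. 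Integrating the kernel bound $|x-y|^{-n}$ against an $O(\va)$ source over the annuli $\va<|x-y|<r$ produces precisely the logarithm in the statement.
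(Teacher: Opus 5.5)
Your scheme is the right one. The Dirichlet-corrector expansion $w_\va=u_{\va,\lambda}-u_{0,\lambda}-(\Phi_{\va,j}^\beta-P_j^\beta)\pa_j u_{0,\lambda}^\beta$, a flux corrector, and a Green representation in which $\va|x-y|^{-n}$ integrated over $\va<|x-y|<r$ gives the logarithm is the argument of \cite{KLS14} that \cite[Lemma 5.6]{Wan23} adapts. The paper's proof only cites that lemma and relabels constants.

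\textbf{The step that fails.} Your source for the factor $(1+|\lambda|r^2)^{-k}$ in front of the $\va$-terms does not work. Lemma \ref{Caccio-data} gives this decay only for the homogeneous term; its $F$- and $f$-terms carry $C(Ck)^{2k}r\{\cdots\}$ with no decay. Proposition \ref{Lip-lambda} even multiplies the source by $(1+|\lambda|r^2)^{N_0}$. So iterating with $k+N_0$ absorbs nothing on the source side; for instance, your term $\lambda(\Phi_\va-P)\nabla u_{0,\lambda}$ would cost $C\va|\lambda|r(1+|\lambda|r^2)^{N_0}\|\nabla u_{0,\lambda}\|_{L^\infty}$.

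In fact no argument can give this decay for general $u_{0,\lambda}$. Take the interior case with $x_0$ deep inside $U$ and $r$ small, and set $u_{0,\lambda}=\f12x_1^2e$. Let $u_{\va,\lambda}$ be the two-scale expansion $u_{0,\lambda}+\va\chi(x/\va)\nabla u_{0,\lambda}+\va^2\chi^{(2)}(x/\va)\nabla^2u_{0,\lambda}$, plus the solution $v\in H_0^1(B_{4r})$ of the remaining equation, whose right side is $\lambda$ times the corrector terms. Choose $\lambda=-r^{-2}(r/\va)^{1/2}$ and let $\va\to0$. One checks, using that $\chi$ has mean zero, that $\nabla v=o(\va)$ on $B_r$. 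Hence the left side of \eqref{local-gradient-approx-lambdaineq} equals $\va$ times the supremum of a fixed periodic function built from $\chi$ and $\chi^{(2)}$, up to $O(\va r)+o(\va)$. For generic $A$ this is of exact order $\va$. For $k\geq2$, however, the whole right side is $O(\va(1+|\lambda|r^2)^{1-k}+\va(1+|\lambda|r^2)^{-k}\log(\va^{-1}r))=o(\va)$.

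The paper does not justify this decay either. It only asserts that each $C_{k,\theta_0}$ of \cite[Lemma 5.6]{Wan23} may be replaced by $C(Ck)^{2k}(1+|\lambda|r^2)^{-k}$, which is legitimate for the homogeneous term only. What your scheme can prove is the estimate with this factor on the first term and undamped $\va$-terms. That is exactly the form used in \eqref{EM-local-first-step}. In Proposition \ref{Green-convergence-II}, any extra decay must come from $(\cL_0-\lambda I)u_{0,\lambda}=0$.

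\textbf{Further corrections needed even without the decay.}

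Your displayed equation for $w_\va$ is wrong. The correct identity is
\[
(\cL_\va-\lambda I)w_\va=\big[A_\va\nabla\Phi_\va-\widehat A\big]\nabla^2u_{0,\lambda}+\op{div}\big(A_\va(\Phi_\va-P)\nabla^2u_{0,\lambda}\big)+\lambda(\Phi_\va-P)\nabla u_{0,\lambda}.
\]
Write $A_\va\nabla\Phi_\va-\widehat A=A_\va\nabla\theta_\va+b(x/\va)$, with $\theta_\va=\Phi_\va-P-\va\chi(x/\va)$ and $b(x/\va)\nabla^2u_{0,\lambda}=\op{div}(\va\phi(x/\va)\nabla^2u_{0,\lambda})$. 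Only then is the non-divergence remainder $A_\va\nabla\theta_\va\nabla^2u_{0,\lambda}$. The term $A_\va\nabla\Phi_\va\cdot\nabla^2u_{0,\lambda}$ in your display is $O(1)$ and would destroy the rate.

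The boundary-layer bound is $|\nabla\theta_\va|\le C\min\{1,\va/\delta\}$, with no logarithm. It follows from $\|\theta_\va\|_{L^\infty}\le C\va$ plus interior Lipschitz estimates, already in $C^{1,\eta}$ domains. With your extra $\log$, integrating against \eqref{Lipschitz1}--\eqref{Lipschitz2} would give $\log^2$.

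Proposition \ref{Lip-lambda} is stated only for $L^p$ sources. The divergence terms should therefore go into the same Green representation via \eqref{Lipschitz4}, split at $|x-y|=\va$:
\begin{itemize}
\item the far part gives $\va\log(\va^{-1}r+2)\|\nabla^2u_{0,\lambda}\|_{L^\infty}$;
\item the near part uses the H\"older regularity of $\va\phi(\cdot/\va)\nabla^2u_{0,\lambda}$ at scale $\va$.
\end{itemize}

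The $\lambda$-term should be integrated against the exponentially decaying $|\nabla_1G_{\va,\lambda}|$. This gives $C\va|\lambda|^{1/2}\|\nabla u_{0,\lambda}\|_{L^\infty}\le C\va(1+|\lambda|r^2)r^{-1}\|\nabla u_{0,\lambda}\|_{L^\infty}$.

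Finally, if you use the corrector of $\wt U$, you must compare it with that of $U$. Their difference is $O(\va)$ in $\wt U$ and vanishes on $\pa\wt U\cap\pa U$, so its gradient is $O(\va/r)$ on $U_r$, which is an admissible error.
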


\begin{proof}
The proof follows the proof of \cite[Lemma 5.6]{Wan23}. We explain only the changes caused by the resolvent parameter and by the refined tracking of the constants.

The local H\"older and $L^\infty$ estimates used in
\cite[Lemma 5.6]{Wan23} are replaced here by Proposition
\ref{Holder-Linfty-data} and Lemma \ref{local-Linfty-lambda}; the local
Lipschitz estimate is replaced by Proposition \ref{Lip-lambda}; and the
Green-function estimates are replaced by Proposition \ref{Green-gradient}.
These estimates have the same structure as the estimates used in
\cite[Lemma 5.6]{Wan23}, except that the constant $C_{k,\theta_0}$ is now
tracked as
\[
\f{C(Ck)^{2k}}{(1+|\lambda|r^2)^k}.
\]
Thus the homogeneous part of the comparison argument gives
\[
\f{C(Ck)^{2k}}
{(1+|\lambda|r^2)^k r}
\fint_{U_{4r}(x_0)}
|u_{\va,\lambda}-u_{0,\lambda}|.
\]

The first-order corrector error terms are estimated exactly as in
\cite[Lemma 5.6]{Wan23}, using the estimates for the Dirichlet correctors.
Keeping the above resolvent factor throughout the argument gives
\[
\f{C(Ck)^{2k}\va}
{(1+|\lambda|r^2)^k}
\Big\{
\log\left(\va^{-1}r+2\right)
\|\nabla^2 u_{0,\lambda}\|_{L^\infty(U_{4r}(x_0))}
+
r^\rho
[\nabla^2 u_{0,\lambda}]_{C^{0,\rho}(U_{4r}(x_0))}
\Big\}.
\]
The zero-order term involving $\lambda$ contributes
\[
\f{C(Ck)^{2k}\va}
{(1+|\lambda|r^2)^k}
|\lambda|r
\|\nabla u_{0,\lambda}\|_{L^\infty(U_{4r}(x_0))}.
\]
Together with the usual first-order corrector contribution
\[
\f{C(Ck)^{2k}\va}
{(1+|\lambda|r^2)^k}
r^{-1}
\|\nabla u_{0,\lambda}\|_{L^\infty(U_{4r}(x_0))},
\]
this gives the second term on the right-hand side of \eqref{local-gradient-approx-lambdaineq}. Combining the preceding bounds yields the desired estimate. The interior case is identical, with $U_s(x_0)$ replaced by $B_s(x_0)$.
\end{proof}

We now prove the zeroth-order Green-function convergence. The proof combines a duality estimate from the elliptic theory with the localized approximation lemma above.

\begin{proof}[Proof of Proposition \ref{Green-convergence-I}]
Fix $x_0,y_0\in U$ with $x_0\neq y_0$, and set $ r:=\f{|x_0-y_0|}{16} $. Then $x_0\notin U_{8r}(y_0)$. We prove the estimate at the point $(x_0,y_0)$.

We first recall the duality estimate obtained in the proof of \cite[Theorem 5.1]{Wan23}. Since the equations considered here are the same, the same argument gives, for any $p\in(n,+\ift)$,
\[
\left(
\int_{U_{4r}(y_0)}
|G_{\va,\lambda}(x_0,y)-G_{0,\lambda}(x_0,y)|^{p'}\ud y
\right)^{\f{1}{p'}}
\leq
C\va r^{1-\f{n}{p}},
\]
where $p'=\f{p}{p-1}$. Hence, by H\"older's inequality and the volume comparability of $U_{4r}(y_0)$,
\be
\fint_{U_{4r}(y_0)}
|G_{\va,\lambda}(x_0,y)-G_{0,\lambda}(x_0,y)|\ud y
\leq
C\va r^{1-n}.
\label{dualityuse}
\ee

We apply Lemma \ref{local-approx-lambda} in the $y$-variable to
\[
u_{\va,\ol\lambda}(y)=G_{\va,\lambda}(x_0,y),
\quad
u_{0,\ol\lambda}(y)=G_{0,\lambda}(x_0,y),
\]
on the pair of domains $U_r(y_0)\subset U_{4r}(y_0)$. This is legitimate because $x_0\notin U_{4r}(y_0)$ and, by the symmetry relation for Green's functions, these functions solve the corresponding adjoint homogeneous equations with parameter $\ol\lambda$ in $U_{4r}(y_0)$. Since only $|\lambda|$ appears in the estimates, Lemma \ref{local-approx-lambda} yields
\be
\begin{aligned}
&|G_{\va,\lambda}(x_0,y_0)-G_{0,\lambda}(x_0,y_0)|
\\
&\leq
\f{C(Ck)^{2k}}
{(1+|\lambda|r^2)^k}
\fint_{U_{4r}(y_0)}
|G_{\va,\lambda}(x_0,y)-G_{0,\lambda}(x_0,y)|\ud y
\\
&\quad
+
\f{C(Ck)^{2k}\va}
{(1+|\lambda|r^2)^k}
\left\{
r^{1-\f{n}{p}}
\|\nabla_y^2G_{0,\lambda}(x_0,\cdot)\|_{L^p(U_{4r}(y_0))}
+
(1+|\lambda|r^2)
\|\nabla_yG_{0,\lambda}(x_0,\cdot)\|_{L^\infty(U_{4r}(y_0))}
\right\}.
\end{aligned}
\label{GvaldaG0lda}
\ee
Since $|x_0-y|\sim r$ for $y\in U_{4r}(y_0)$, the gradient estimate \eqref{Lipschitz1}, applied with $\va=0$ and with the fixed value $k=1$, gives
\be
(1+|\lambda|r^2)
\|\nabla_yG_{0,\lambda}(x_0,\cdot)\|_{L^\infty(U_{4r}(y_0))}
\leq
Cr^{1-n}.
\label{thethirdterm}
\ee

Next we estimate the second-order term. If $n\geq3$, then \eqref{Gvaldangeq3}, applied with $\va=0$ and with the fixed value $k=1$, gives
\[
\left(\fint_{U_{8r}(y_0)}
|G_{0,\lambda}(x_0,y)|^2\ud y
\right)^{\f12}
\leq
Cr^{2-n}.
\]
Applying Lemma \ref{L0-second} on comparable boundary or interior balls contained in $U_{8r}(y_0)$ and covering $U_{4r}(y_0)$, we obtain
\[
\left(\fint_{U_{4r}(y_0)}
|\nabla_y^2G_{0,\lambda}(x_0,y)|^p\ud y
\right)^{\f1p}
\leq
Cr^{-n}.
\]
Equivalently,
\[
r^{1-\f{n}{p}}
\|\nabla_y^2G_{0,\lambda}(x_0,\cdot)\|_{L^p(U_{4r}(y_0))}
\leq
Cr^{1-n}.
\]

When $n=2$, the same bound follows from the two-dimensional argument in \cite[Theorem 5.1]{Wan23}, after formula (5.17), using the interior or boundary Lipschitz estimate for $\nabla_yG_{0,\lambda}(x_0,\cdot)$ according as $T_{4r}^{U}(y_0)=\emptyset$ or not. In the present notation, this gives
\[
r^{1-\f{2}{p}}
\|\nabla_y^2G_{0,\lambda}(x_0,\cdot)\|_{L^p(U_{4r}(y_0))}
\leq
Cr^{-1}.
\]
Thus, in all dimensions $n\geq2$,
\[
\begin{aligned}
&r^{1-\f{n}{p}}
\|\nabla_y^2G_{0,\lambda}(x_0,\cdot)\|_{L^p(U_{4r}(y_0))}+
(1+|\lambda|r^2)
\|\nabla_yG_{0,\lambda}(x_0,\cdot)\|_{L^\infty(U_{4r}(y_0))}
\leq
Cr^{1-n}.
\end{aligned}
\]
Combining this estimate with \eqref{dualityuse} and \eqref{GvaldaG0lda}, we obtain
\[
|G_{\va,\lambda}(x_0,y_0)-G_{0,\lambda}(x_0,y_0)|
\leq
\f{C(Ck)^{2k}\va}
{(1+|\lambda|r^2)^k r^{n-1}}.
\]
Since $r=\f{|x_0-y_0|}{16}$, the last estimate implies
\[
|G_{\va,\lambda}(x_0,y_0)-G_{0,\lambda}(x_0,y_0)|
\leq
\f{C(Ck)^{2k}\va}
{(1+|\lambda||x_0-y_0|^2)^k |x_0-y_0|^{n-1}},
\]
after increasing $C>0$. This completes the proof.
\end{proof}

We finish with the first-order convergence estimate. The proof applies the localized first-order approximation lemma to the two Green's functions away from the pole.

\begin{proof}[Proof of Proposition \ref{Green-convergence-II}]
Fix $x_0,y_0\in U$ with $x_0\neq y_0$, and set $ r:=\f{|x_0-y_0|}{16} $.
We may assume that $0<\va<r$. Indeed, if $\va\geq r$, then the desired estimate follows directly from the size estimates \eqref{Lipschitz1}, the uniform bound
\be
\|\nabla\Phi_{\va,j}^{\beta}\|_{L^\infty(U)}\leq C\label{Phiuniform}
\ee
from \cite[Lemma 2.2]{KLS13}, and the fact that
$\va |x_0-y_0|^{-n}\geq c|x_0-y_0|^{1-n}$ in this case.

Fix $\ga\in\Z\cap[1,m]$ and define
\[
u_{\va,\lambda}(x)
=
(G_{\va,\lambda}^{\al\gamma}(x,y_0))_{1\leq\al\leq m},
\quad
u_{0,\lambda}(x)
=
(G_{0,\lambda}^{\al\gamma}(x,y_0))_{1\leq\al\leq m}.
\]
Since $y_0\notin U_{4r}(x_0)$, we have
\[
(\cL_\va-\lambda I)u_{\va,\lambda}
=
(\cL_0-\lambda I)u_{0,\lambda}
=
0
\quad\text{in }U_{4r}(x_0),
\]
and both functions vanish on $T_{4r}^{U}(x_0)$ if
$T_{4r}^{U}(x_0)\neq\emptyset$.

By Proposition \ref{Green-convergence-I}, applied with the fixed value $k=1$, we have
\[
\|u_{\va,\lambda}-u_{0,\lambda}\|_{L^\infty(U_{4r}(x_0))}
\leq
C\va r^{1-n}.
\]
Moreover, since $|x-y_0|\sim r$ for $x\in U_{4r}(x_0)$, it follows from \eqref{Lipschitz1} and Lemma \ref{L0-second} that
\[
\|\nabla u_{0,\lambda}\|_{L^\infty(U_{4r}(x_0))}
\leq
\f{C}{(1+|\lambda|r^2)r^{n-1}},
\]
and
\[
\|\nabla^2u_{0,\lambda}\|_{L^\infty(U_{4r}(x_0))}
+
r^\rho[\nabla^2u_{0,\lambda}]_{C^{0,\rho}(U_{4r}(x_0))}
\leq
Cr^{-n}.
\]
Applying Lemma \ref{local-gradient-approx-lambda} to $u_{\va,\lambda}$ and $u_{0,\lambda}$, and using the preceding estimates, gives, for any $ k\in\Z_+$,
\[
\begin{aligned}
&\left|
\f{\partial}{\partial x_i}
G_{\va,\lambda}^{\al\gamma}(x_0,y_0)
-
\f{\partial\Phi_{\va,j}^{\al\beta}}{\partial x_i}(x_0)
\f{\partial}{\partial x_j}
G_{0,\lambda}^{\beta\gamma}(x_0,y_0)
\right|\leq
\f{
C(Ck)^{2k}\va\log(\va^{-1}r+2)
}
{(1+|\lambda|r^2)^k r^n}.
\end{aligned}
\]
Since $r=\f{|x_0-y_0|}{16}$, after increasing $C$ and absorbing the fixed powers of $16$ into $C(Ck)^{2k}$, we obtain \eqref{Green-convergence-IIineq}. This completes the proof.
\end{proof}

\section{Convergence of Parabolic Green's Functions}

In this section, we prove Theorems \ref{intro-thm-parabolic-zero} and \ref{intro-thm-parabolic-first}. Throughout the section, the coefficient matrix $A$ satisfies \eqref{intro-symmetry}--\eqref{intro-holder}. We consider the parabolic operator
\[
\pa_t+\cL_\va
\quad\text{in }\om\times\R,
\]
where $\om\subset\R^n$ is a bounded $C^{1,1}$ domain. For $\va\in(0,1]$,
\[
\cL_\va=-\op{div}\left[A\left(\f{x}{\va}\right)\nabla\right],
\]
while $\cL_0=-\op{div}(\widehat A\nabla)$ denotes the homogenized operator. We keep the notation $G_{\va,\lambda}(x,y)$ for the elliptic resolvent Green's function of $\cL_\va-\lambda I$ constructed in Section~\ref{Green-functions-section}, where
\[
\lambda\in\Sigma_{\theta_0}
=
\left\{
\lambda\in\C\backslash\{0\}:\ |\arg\lambda|>\theta_0
\right\},
\quad
\theta_0\in\(0,\f{\pi}{2}\).
\]

We first recall the Dirichlet parabolic Green's function and fix the kernel conventions used in the inverse Laplace representation.

\begin{prop}[Parabolic Green's function]\label{defn-parabolic-Green}
Let $\va\in[0,1]$. There exists a matrix-valued function
\[
G_\va(x,t;y,s)
=
(G_\va^{\al\beta}(x,t;y,s))_{\al,\beta\in\Z\cap[1,m]},
\quad
x,y\in\om,\quad t,s\in\R,
\]
called the Dirichlet parabolic Green's function of $\pa_t+\cL_\va$ in $\om\times\R$, so that the following properties hold.
\begin{enumerate}[label=$(\theenumi)$]
\item $G_\va$ is continuous in $(\om\times\R)\times(\om\times\R)\backslash\{(x,t)=(y,s)\}$.

\item If $t\leq s$, then $G_\va(x,t;y,s)=0$. Moreover, $G_\va(\cdot,\cdot;y,s)=0$ on $\pa\om\times\R$.

\item For each fixed pole $(y,s)$,
\[
(\pa_t+\cL_\va)G_\va(\cdot,\cdot;y,s)
=
\delta_{(y,s)}(\cdot,\cdot)\op{Id}_m
\quad\text{in }\om\times\R
\]
in the sense of distributions. Equivalently, for any $F\in C_0^\infty(\om\times\R;\C^m)$,
\[
u_\va(x,t)
=
\int_{\om\times\R}G_\va(x,t;y,s)F(y,s)\ud y\ud s
\]
is a weak solution of
\[
(\pa_t+\cL_\va)u_\va=F
\quad\text{in }\om\times\R,
\quad
u_\va=0
\quad\text{on }\pa\om\times\R.
\]

\item In the sense of distributions in the spatial variable,
\[
\lim_{t\to s^+}G_\va(x,t;\cdot,s)=\delta_x(\cdot)\op{Id}_m.
\]
Equivalently, for any $g\in C_0^\infty(\om;\C^m)$, the function
\[
u_\va^\al(x,t)
=
\int_\om G_\va^{\al\beta}(x,t;y,s)g^\beta(y)\ud y,
\quad t>s,
\]
is a weak solution of
\[
\left\{
\begin{aligned}
(\pa_t+\cL_\va)u_\va&=0
&&\text{in }\om\times(s,+\infty),
\\
u_\va&=0
&&\text{on }\pa \om\times(s,+\infty),
\\
u_\va(\cdot,s)&=g
&&\text{in }\om.
\end{aligned}
\right.
\]
\end{enumerate}
\end{prop}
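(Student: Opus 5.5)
The plan is to construct $G_\va$ as the integral kernel of the semigroup generated by the Dirichlet realization of $-\cL_\va$, and to obtain pointwise regularity of that kernel from the local H\"older theory for parabolic systems. Fix $\va\in[0,1]$ and let $L_\va$ be the operator on $L^2(\om;\C^m)$ associated with the form $\cB_{\va,0,\om}$ of \eqref{defnBva} on $H_0^1(\om;\C^m)$. By \eqref{intro-symmetry}--\eqref{intro-ellipticity} (and \eqref{syl0}--\eqref{ell0} when $\va=0$), this form is Hermitian, closed and coercive, so $L_\va$ is nonnegative and self-adjoint. Hence $-L_\va$ generates a bounded analytic semigroup $T_\va(t)=e^{-tL_\va}$ on $L^2$, and
\[
(\lambda-L_\va)^{-1}=-\int_0^\infty e^{\lambda t}T_\va(t)\,\ud t
\]
for $\lambda\in\Sigma_{\theta_0}$ with $\op{Re}\lambda<0$. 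For $\op{Re}\lambda>0$ one uses analytic continuation instead.

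The key steps are as follows.

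\begin{enumerate}
\item \textbf{Local boundedness and H\"older continuity.} Since $A$ is H\"older continuous, weak solutions of $(\pa_t+\cL_\va)u=0$ in parabolic cylinders $Q_r=U_r(x_0)\times(t_0-r^2,t_0)$, vanishing on the lateral boundary portion when $x_0$ is near $\pa\om$, satisfy local $L^\infty$ and H\"older estimates. These estimates play the role of the De Giorgi--Nash--Moser property for systems. I would take them from the interior and boundary Lipschitz estimates of Geng--Shen \cite{GS15} and Geng--Shi \cite{GS20}; for fixed $\va$ classical Schauder theory also suffices, since only existence is claimed.
\item \textbf{Existence of the kernel.} With this property, the construction of Cho--Dong--Kim \cite{CDK08} (see also Hofmann--Kim \cite{HK04}) applies. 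Solve the problem with mollified sources $\phi_\rho=|Q_\rho(y,s)|^{-1}\chi_{Q_\rho(y,s)}e^\gamma$ in $\om\times\R$. Derive uniform bounds in $\rho$ from the energy estimate and the local $L^\infty$ bound. Pass to a weak limit $\rho\to0$ to obtain $G_\va(\cdot,\cdot;y,s)$, which solves the equation with a Dirac source as in property (3). The same bounds give H\"older continuity away from the pole, which is property (1).
\item \textbf{Boundary and causality.} The vanishing on $\pa\om\times\R$ is inherited because each approximant lies in $L^2H_0^1$. The vanishing for $t\le s$ holds because the approximants are zero before time $s-\rho^2$, by uniqueness for the forward Cauchy--Dirichlet problem.
\item \textbf{Identification with the semigroup.} For $g\in C_0^\infty(\om)$, the representation $u_\va(x,t)=\int_\om G_\va(x,t;y,s)g(y)\,\ud y$ coincides with $T_\va(t-s)g$. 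To see this, test the equation satisfied by $G_\va$ against the adjoint Green's function. Since the form is Hermitian, the adjoint operator is $L_\va$ itself, and this gives the transpose symmetry $G_\va(x,t;y,s)=[G_\va(y,t;x,s)]^T$. Strong $L^2$-continuity of the semigroup at $t=s$ then yields property (4).
\item \textbf{Time translation and inverse Laplace formula.} Time-independence of $A$ gives $G_\va(x,t;y,s)=G_\va(x,t-s;y,0)$. Taking Laplace transforms in time and comparing with the representation formulas of Proposition~\ref{Green-ngeq3} and Theorem~\ref{Green-n2} identifies $\int_0^\infty e^{\lambda t}G_\va(x,t;y,0)\,\ud t$ with $-G_{\va,\lambda}(x,y)$ for $\op{Re}\lambda<0$. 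This is the inverse Laplace representation used later.
\end{enumerate}

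The main obstacle I expect is the boundary H\"older estimate for \emph{systems} in step~1, which must hold uniformly so that the limit $\rho\to0$ in step~2 is legitimate. For systems no maximum principle is available, so the estimate must come from regularity theory. In the present setting it follows from the $C^{1,1}$ (in fact $C^{1,\eta}$) regularity of $\om$ together with the H\"older continuity of $A$, via the boundary estimates in \cite{GS20}. I would verify that those estimates are stated in a form that covers time-independent coefficients with Dirichlet data on a flat or $C^{1,\eta}$ lateral boundary; all remaining steps are standard functional analysis.
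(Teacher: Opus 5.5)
Your proposal is correct and follows the paper's route. The paper gives no separate argument: it obtains $G_\va$ and properties (1)--(4) directly from the Cho--Dong--Kim construction \cite[Theorem 2.7]{CDK08} together with \cite[Section 7]{GS15}, and your steps 1--4 unpack exactly that citation.

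Two minor remarks. First, \cite{CDK08} needs only an interior H\"older estimate for weak solutions, which for fixed $\va$ follows from Schauder theory, so the boundary estimate you single out as the main obstacle is not needed for the listed properties. Second, in step 5 the Laplace transform $\int_0^\infty e^{\lambda t}G_\va(x,t;y,0)\,\ud t$ equals $+G_{\va,\lambda}(x,y)$, not $-G_{\va,\lambda}(x,y)$, because $G_{\va,\lambda}$ is the kernel of $(\cL_\va-\lambda I)^{-1}=-(\lambda I-\cL_\va)^{-1}$.
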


The existence and the above properties follow from the construction of parabolic Green's matrices in \cite[Section 7]{GS15} and \cite[Theorem 2.7]{CDK08}. We next relate the parabolic Green's function to the elliptic resolvent Green's function.

For $\gamma>0$, set
\be
\mathcal C_\gamma=\{\gamma+\op{i}\eta:\eta\in\R\},
\label{contour}
\ee
oriented upward. This contour is used in the inverse Laplace representation below.

\begin{lem}[Inverse Laplace representation]\label{lem-parabolic-inversion}
For $t>s$,
\begin{equation}\label{parabolic-inversion-zeta}
G_\va(x,t;y,s)
=
\f{1}{2\pi\op{i}}
\int_{\mathcal C_\gamma}
\exp\{(t-s)\zeta\}
G_{\va,-\zeta}(x,y)\ud\zeta.
\end{equation}
The identity is first understood as the inverse Laplace formula for the semigroup generated by $-\cL_\va$, and then as an identity of distribution kernels.
\end{lem}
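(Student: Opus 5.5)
The plan is to derive \eqref{parabolic-inversion-zeta} from the spectral theory of the Dirichlet realization of $\cL_\va$, and then identify kernels. By \eqref{intro-symmetry} and \eqref{intro-ellipticity} (and \eqref{syl0}--\eqref{ell0} when $\va=0$), the form $\cB_{\va,0,\om}$ on $H_0^1(\om;\C^m)$ is Hermitian, closed and coercive. Hence the associated operator $L_\va$ is self-adjoint and nonnegative on $L^2(\om;\C^m)$; by Poincar\'e's inequality it is in fact bounded below by some $c_\om>0$. Its spectrum lies in $[c_\om,\infty)$. For $\zeta\in\mathcal C_\gamma$ with $\gamma>0$, the point $-\zeta$ has real part $-\gamma<0$, so $-\zeta\in\Sigma_{\theta_0}$ for every $\theta_0<\f{\pi}{2}$. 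The resolvent $(L_\va+\zeta)^{-1}=(L_\va-\lambda)^{-1}$ with $\lambda=-\zeta$ then exists and satisfies $\|(L_\va+\zeta)^{-1}\|\le |\zeta|^{-1}$, since $\op{Re}\zeta>0$. By Proposition~\ref{Green-ngeq3} and Theorem~\ref{Green-n2}, this resolvent has kernel $G_{\va,-\zeta}(x,y)$. The conjugation convention in the representation formula is handled by reading the formula as $u=\int G\,\overline{F}$ with $F$ replaced by $\overline{F}$.

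Next, the semigroup $e^{-tL_\va}$ is defined by functional calculus. I will invoke the standard inverse Laplace formula for $\op{Re}$-positive contours: for $g\in L^2$ and $t>0$,
\[
e^{-tL_\va}g=\lim_{R\to\infty}\f{1}{2\pi\op{i}}\int_{\gamma-\op{i}R}^{\gamma+\op{i}R}e^{t\zeta}(L_\va+\zeta)^{-1}g\,\ud\zeta .
\]
This is checked on the spectral side: for each $\mu_0\ge c_\om$, the scalar identity $\f{1}{2\pi\op{i}}\int_{\mathcal C_\gamma}e^{t\zeta}(\mu_0+\zeta)^{-1}\ud\zeta=e^{-t\mu_0}$ holds as a principal value, by closing the contour to the left and picking up the residue at $\zeta=-\mu_0$. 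The partial integrals are uniformly bounded in $\mu_0$ and $R$ (a standard Dirichlet-integral bound), so dominated convergence in the spectral measure gives convergence in $L^2$. To avoid conditional convergence altogether, I will first apply the formula to $g\in D(L_\va)$, or insert one integration by parts in $\zeta$:
\[
\int e^{t\zeta}(L_\va+\zeta)^{-1}\ud\zeta=\f{1}{t}\int e^{t\zeta}(L_\va+\zeta)^{-2}\ud\zeta .
\]
The integrand is then $O(|\zeta|^{-2})$ in operator norm, so the integral converges absolutely in $\mathcal B(L^2)$.

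Then I identify kernels. By uniqueness of the Dirichlet problem for $\pa_t+\cL_\va$ (Proposition~\ref{defn-parabolic-Green}(4)), $e^{-(t-s)L_\va}g(x)=\int_\om G_\va(x,t;y,s)g(y)\ud y$ for $g\in C_0^\infty(\om)$. Pairing both sides with $h\in C_0^\infty(\om)$ gives, by Fubini (justified by absolute convergence after the integration by parts),
\[
\iint G_\va(x,t;y,s)g(y)\overline{h(x)}
=\f{1}{2\pi\op{i}}\int_{\mathcal C_\gamma}e^{(t-s)\zeta}\iint G_{\va,-\zeta}(x,y)g(y)\overline{h(x)}\,\ud\zeta .
\]
This is the identity of distribution kernels claimed. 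Independence of $\gamma>0$ follows from Cauchy's theorem, using the $|\zeta|^{-1}$ decay of the resolvent on the horizontal connecting segments.

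The main obstacle is justifying the interchange of the contour integral with the spatial integrals, since the contour integral converges only conditionally in operator norm. I would resolve it with the integration-by-parts step above, or alternatively with the pointwise bounds \eqref{Gvaldangeq3} for $k=1$, which give $|G_{\va,-\zeta}(x,y)|\lesssim(1+|\zeta||x-y|^2)^{-1}|x-y|^{2-n}$. After one integration by parts in $\zeta$ this bound is integrable on $\mathcal C_\gamma$ for fixed $x\ne y$, which also upgrades the identity to a pointwise one off the diagonal.
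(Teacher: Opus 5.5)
Your proposal is correct and follows essentially the same route as the paper: the form $\cB_{\va,0,\om}$ gives a nonnegative self-adjoint Dirichlet realization, the inverse Laplace formula holds in the strong operator sense on $L^2(\om;\C^m)$, and testing against smooth compactly supported functions identifies the kernels $G_\va(\cdot,t;\cdot,s)$ and $G_{\va,-\zeta}$. Your spectral-side verification and your uniqueness argument for the initial--Dirichlet problem only supply details that the paper treats as standard. One small correction: your final display needs no Fubini step, because the contour integral already sits outside the pairing. Also, when the supports of $g$ and $h$ overlap, that display holds only as a symmetric limit of truncated contour integrals (the pairing with $G_{\va,-\zeta}$ decays only like $|\zeta|^{-1}$), not as an absolutely convergent integral.
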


\begin{proof}
We give the details because the distinction between the operator identity and the kernel identity will be used later. Let $A_\va(x)$ be as in \eqref{Avax}. In $H_0^1(\om;\C^m)$ define
\[
\cB_{\va,0,\om}[u,v]
=
\int_\om \left\langle A_\va\nabla u,\ol{\nabla v}\right\rangle\ud x,
\quad
u,v\in H_0^1(\om;\C^m).
\]
By \eqref{intro-ellipticity},
\[
\operatorname{Re}\cB_{\va,0,\om}[u,u]
=
\cB_{\va,0,\om}[u,u]
\geq
\mu\int_\om |\nabla u|^2\ud x,
\quad
u\in H_0^1(\om;\C^m),
\]
and
\[
|\cB_{\va,0,\om}[u,v]|
\leq
C\|\nabla u\|_{L^2(\om)}
\|\nabla v\|_{L^2(\om)}.
\]
Moreover, \eqref{intro-symmetry} gives
\[
\cB_{\va,0,\om}[u,v]
=
\overline{\cB_{\va,0,\om}[v,u]},
\quad
u,v\in H_0^1(\om;\C^m).
\]
Thus $\cB_{\va,0,\om}$ is a densely defined, closed, symmetric, and nonnegative form on $L^2(\om;\C^m)$.

We define the Dirichlet realization of $\cL_\va$ as the operator $\wt{\cL}_\va$ associated with this form. More explicitly,
\[
D(\wt{\cL}_\va)
=
\left\{
u\in H_0^1(\om;\C^m):
\exists f\in L^2(\om;\C^m)
\text{ such that }
\cB_{\va,0,\om}[u,v]
=
\int_\om f^\al\overline{v^\al}
\text{ for any }v\in H_0^1(\om;\C^m)
\right\},
\]
and for such $u$ we set
\[
\wt{\cL}_\va u=f.
\]
When no confusion can arise, we write $\cL_\va$ for $\wt{\cL}_\va$. The preceding argument shows that this Dirichlet realization is a nonnegative self-adjoint operator in $L^2(\om;\C^m)$. Hence $-\cL_\va$ generates the strongly continuous semigroup
\[
S_\va(\tau)=\exp\{-\tau\cL_\va\},
\quad
\tau>0.
\]

We now fix the kernel terminology. If $T$ is an operator acting on test functions in $\om$, we say that $K_T(x,y)$ is its distribution kernel if, for any $F,H\in C_0^\infty(\om;\C^m)$,
\[
\langle TF,H\rangle_{L^2(\om)}
=
\int_{\om}\int_{\om}
K_T^{\al\beta}(x,y)F^\beta(y)\overline{H^\al(x)}
\ud y\ud x,
\]
whenever the right-hand side is represented by a locally integrable function; otherwise, the same identity is understood in the sense of distributions on $\om\times\om$. If there is no ambiguity, we simply call $K_T$ the kernel of $T$. By the definition of the parabolic Green's function, $G_\va(x,t;y,s)$ is the distribution kernel of $S_\va(t-s)$:
\[
S_\va(t-s)F(x)
=
\int_{\om}G_\va(x,t;y,s)F(y)\ud y,
\quad t>s.
\]
For $\operatorname{Re}\zeta>0$, set
\[
R_\va(\zeta):=(\cL_\va+\zeta I)^{-1}.
\]
The distribution kernel of $R_\va(\zeta)$ is $G_{\va,-\zeta}(x,y)$, because $G_{\va,-\zeta}$ is the elliptic Green's function of
\[
\cL_\va+\zeta I
=
\cL_\va-(-\zeta)I.
\]

The inverse Laplace formula for the semigroup gives, in the strong operator sense on $L^2(\om;\C^m)$,
\[
S_\va(t-s)
=
\f{1}{2\pi\op{i}}
\int_{\mathcal C_\gamma}
\exp\{(t-s)\zeta\}
R_\va(\zeta)\ud\zeta,
\quad t>s,\quad \gamma>0.
\]
Equivalently,
\[
\exp\{-(t-s)\cL_\va\}
=
\f{1}{2\pi\op{i}}
\int_{\mathcal C_\gamma}
\exp\{(t-s)\zeta\}
(\cL_\va+\zeta I)^{-1}\ud\zeta.
\]
Testing this operator identity against $F,H\in C_0^\infty(\om;\C^m)$ gives
\[
\begin{aligned}
&
\int_{\om}\int_{\om}
G_\va^{\al\beta}(x,t;y,s)
F^\beta(y)\overline{H^\al(x)}
\ud y\ud x
\\
&\quad =
\f{1}{2\pi\op{i}}
\int_{\mathcal C_\gamma}
\exp\{(t-s)\zeta\}
\left[
\int_{\om}\int_{\om}
G_{\va,-\zeta}^{\al\beta}(x,y)
F^\beta(y)\overline{H^\al(x)}
\ud y\ud x
\right]
\ud\zeta .
\end{aligned}
\]
By the uniqueness of distribution kernels, we obtain \eqref{parabolic-inversion-zeta}.
\end{proof}

Since $\operatorname{Re}\zeta=\gamma>0$ on $\mathcal C_\gamma$, we have $-\zeta\in\Sigma_{\theta_0}$ for any fixed $\theta_0\in(0,\f{\pi}{2})$. Thus the elliptic resolvent estimates established in Section~\ref{ConGreenResolvent} apply to $G_{\va,-\zeta}$. We also fix the square-root branch and the spatial kernel-composition notation used below. Let
\[
\C_+=\{\zeta\in\C:\operatorname{Re}\zeta>0\}.
\]
For $\zeta\in\C_+$, we use the principal branch of the square root:
\[
\sqrt{\zeta}
=
|\zeta|^{\f12}
\exp\left(\f{\op{i}}{2}\arg\zeta\right),
\quad\arg\zeta\in\(-\f{\pi}{2},\f{\pi}{2}\).
\]
Thus $\op{Re}(\sqrt{\zeta})>0$. In particular, if $\zeta=\gamma+\op{i}\eta\in\mathcal C_\gamma$, then
\be
\op{Re}(\sqrt{\zeta})
=
\left(\f{|\zeta|+\gamma}{2}\right)^{\f12},
\quad
\f{1}{\sqrt2}|\zeta|^{\f12}
\leq
\op{Re}(\sqrt{\zeta})
\leq
|\zeta|^{\f12}.
\label{zeta12}
\ee

If $K$ and $H$ are $\C^{m\times m}$-valued distribution kernels on $\om\times\om$, we denote by $K\star H$ their spatial kernel composition:
\be
(K\star H)^{\al\gamma}(x,y)
:=
\int_\om K^{\al\beta}(x,z)H^{\beta\gamma}(z,y)\ud z,
\label{stardefinition}
\ee
whenever the integral is absolutely convergent. Otherwise, the identity is understood after testing against smooth compactly supported functions. We set
\[
K^{\star q}
:=
\underbrace{K\star\cdots\star K}_{q\text{ factors}},
\quad q\in\Z_+,
\]
and
\be
K^{\star0}:=\mathsf{Id},
\label{K0star}
\ee
where $\mathsf{Id}$ is the identity distribution kernel
\[
\mathsf{Id}^{\al\beta}(x,y)
=
\delta_{\al\beta}\delta_y(x).
\]
Here $\delta_{\al\beta}$ is the Kronecker symbol and $\delta_y$ denotes the Dirac measure at $y$. Equivalently,
\[
\mathsf{Id}\star K=K\star\mathsf{Id}=K
\]
in the sense of distribution kernels.

We shall use the following Bessel-type kernels. Let $a>0$. For $\rho>0$ and $\sigma>0$, define
\be
\mathcal K_{\sigma,\rho}^{(a)}(r)
=
\begin{cases}
r^{\sigma-n}\exp\{-a\rho r\},
&\sg\in(0,n),\\[4pt]
\left(1+\left|\log(\rho r)\right|\right)\exp\{-a\rho r\},
&\sigma=n,\\[4pt]
\rho^{n-\sigma}\exp\{-a\rho r\},
&\sg\in(n,+\ift).
\end{cases}
\label{Bessel-kernel-definition}
\ee

\begin{lem}\label{lem-Bessel-kernel-convolution}
Let $\sigma,\sigma'>0$ and $a>0$. Then, for any $\rho>0$ and any $x,y\in\om$,
\begin{equation}\label{Bessel-kernel-convolution}
\int_\om
\mathcal K_{\sigma,\rho}^{(a)}(|x-z|)
\mathcal K_{\sigma',\rho}^{(a)}(|z-y|)
\ud z
\leq
C
\mathcal K_{\sigma+\sigma',\rho}^{(\f{a}{2})}(|x-y|),
\end{equation}
where $C$ depends only on $n$, $\sigma$, $\sigma'$, and $a$.
\end{lem}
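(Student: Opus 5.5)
We prove \eqref{Bessel-kernel-convolution} by splitting the exponential and reducing to a convolution estimate for the algebraic parts. Write $r=|x-y|$. The triangle inequality $|x-z|+|z-y|\geq r$ gives
\[
\exp\{-a\rho|x-z|\}\exp\{-a\rho|z-y|\}
\leq
\exp\{-\tfrac a2\rho r\}\exp\{-\tfrac a2\rho|x-z|\}\exp\{-\tfrac a2\rho|z-y|\}.
\]
After pulling out the factor $\exp\{-\tfrac a2\rho r\}$, it therefore suffices to bound
\[
I:=\int_{\R^n}k_\sigma(|x-z|)\,k_{\sigma'}(|z-y|)\,e^{-\f a2\rho(|x-z|+|z-y|)}\ud z
\]
by $C$ times the algebraic part of $\mathcal K^{(a/2)}_{\sigma+\sigma',\rho}(r)$. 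Here $k_\sigma$ denotes the non-exponential factor in \eqref{Bessel-kernel-definition}, and we have enlarged the domain of integration from $\om$ to $\R^n$.

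By scaling $z=x+w/\rho$, the integral becomes $\rho^{-\sigma-\sigma'+n}$ times the same integral with $\rho=1$ and $r$ replaced by $R=\rho r$. The kernels are homogeneous in exactly this way: $k_\sigma(s)=\rho^{n-\sigma}k_\sigma^{(1)}(\rho s)$ in all three regimes, with the convention $\rho=1$ in the logarithmic case. So we may take $\rho=1$. We then split $\R^n$ into three regions:
\begin{itemize}
\item $D_1=\{|z-x|<R/2\}$, where $|z-y|\sim R$;
\item $D_2=\{|z-y|<R/2\}$, which is symmetric to $D_1$;
\item $D_3$, the remainder, where $|z-x|\sim|z-y|\gtrsim R$.
\end{itemize}

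Two regimes arise.

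\emph{$R\leq1$.} On $D_1$ the contribution is $k_{\sigma'}(R)\int_{|w|<R/2}k_\sigma(|w|)\,dw$. The integral is comparable to $R^\sigma$ when $\sigma<n$, to $R^n(1+|\log R|)$ when $\sigma=n$, and to $R^n$ when $\sigma>n$. Multiplying by $k_{\sigma'}(R)$ yields at most the bound for index $\sigma+\sigma'$; for example, $R^{\sigma+\sigma'-n}$ when $\sigma+\sigma'<n$, and a bounded quantity or a $\log$ term otherwise. On $D_3$ we integrate radially: $\int_{R/2}^{\infty}k_\sigma(s)k_{\sigma'}(s)e^{-as}s^{n-1}\,ds$. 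This gives $R^{\sigma+\sigma'-n}$ if $\sigma+\sigma'<n$, $1+|\log R|$ if $\sigma+\sigma'=n$, and $O(1)$ if $\sigma+\sigma'>n$. These are exactly $k^{(1)}_{\sigma+\sigma'}(R)$.

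\emph{$R>1$.} The target bound is $k^{(1)}_{\sigma+\sigma'}(R)$ up to a constant. This is always at least $cR^{-N}$ for some $N$, and in the case $\sigma+\sigma'<n$ it equals $R^{\sigma+\sigma'-n}$. We gain decay by keeping only a quarter of the exponent to pull out, so that a remaining factor $e^{-\f a4(|x-z|+|z-y|)}$ is left inside the integral. On $D_1$ this leftover factor contributes at least $e^{-cR}$, which absorbs any polynomial loss. The integrals of $k_\sigma e^{-a|w|/4}$ are finite, including the local singularity $|w|^{\sigma-n}$ and the local logarithm. On $D_3$ the leftover factor makes the integral $O(e^{-cR})$. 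Hence $I\leq C e^{-cR}\leq Ck^{(1)}_{\sigma+\sigma'}(R)$.

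With the quarter-exponent split the final exponent is $a/4$ rather than $a/2$. We fix this by running the argument with $a$ replaced by a split of $a$ into $a/2+a/2$ plus an extra slack term: we use $e^{-a(s_1+s_2)}\leq e^{-\f a2 R}e^{-\f a2(s_1+s_2)+\f a2R}$ together with $s_1+s_2-R\geq0$. On $D_1$ in the regime $R>1$, polynomial factors in $R$ need genuine decay. This is the main obstacle, and it is handled by a more careful observation: on $D_1$ the factor $k_{\sigma'}(|z-y|)\sim k_{\sigma'}(R)$ while $\int k_\sigma e^{-\f a2|w|}\lesssim1$, so the product is bounded by $k_{\sigma'}(R)\lesssim k_{\sigma+\sigma'}(R)$, since the indices only go up and $R>1$. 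Similarly, on $D_3$, since $s_1+s_2-R\gtrsim s_1$ there, we retain the factor $e^{-c s_1}$ and the radial integral is bounded by $k_\sigma(R)k_{\sigma'}(R)R^{n}$-type terms times exponential decay. This also gives a bound of $C\,k_{\sigma+\sigma'}(R)$, with constants depending only on $n,\sigma,\sigma',a$.
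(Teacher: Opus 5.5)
Your argument is essentially correct, but it is organized differently from the paper's.

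\textbf{Comparison with the paper.} The paper rescales first and extracts $e^{-aR/2}$ by the triangle inequality only when $R>2$.
\begin{itemize}
\item For $R\le 2$ it drops the exponentials on $B_4$ and splits into $E_1,E_2,E_3$. It then cuts $E_3$ into dyadic shells $2^jR<|Z|\le 2^{j+1}R$, sums them with an elementary dyadic estimate, and treats $|Z|>4$ separately.
\item For $R>2$ it proves the sharper intermediate bound $\int\psi_\sigma\psi_{\sigma'}e^{-\frac a2(|Z|+|Z-Re_1|)}\,dZ\le C\psi_{\sigma+\sigma'}(R)$. It uses the exact Riesz scaling $Z=RW$ when $\sigma+\sigma'<n$ and a uniform bound otherwise.
\end{itemize}
You instead do the following.
\begin{itemize}
\item You extract $e^{-\frac a2\rho r}$ once for all $R$.
\item You replace the dyadic shells by a single radial integral over $D_3$. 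This is legitimate, since $\frac13|z-x|\le|z-y|\le 3|z-x|$ there, so $k_{\sigma'}(|z-y|)\le Ck_{\sigma'}(|z-x|)$.
\item For $R>1$ you only need $I\le Ce^{-cR}$. This suffices because $k^{(1)}_{\sigma+\sigma'}(R)\ge\min\{1,R^{\sigma+\sigma'-n}\}$ decays at most polynomially.
\end{itemize}
Your route is shorter. The paper's sharper large-$R$ intermediate estimate is not needed for the lemma.

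\textbf{The last paragraph of your $R>1$ case should be deleted.} It addresses an obstacle that does not exist and contains false claims.
\begin{itemize}
\item The exponent does not drop to $a/4$. The factor $e^{-\frac a2\rho r}$ was extracted before $I$ was formed, and splitting the factor $e^{-\frac a2(|x-z|+|z-y|)}$ retained inside $I$ does not affect it. Your displayed ``fix'' is vacuous: its right-hand side is just $e^{-\frac a2(s_1+s_2)}$.
\item The claim $k_{\sigma'}(R)\le Ck_{\sigma+\sigma'}(R)$ for $R>1$ fails when $\sigma'=n$. There the left side is $1+\log R$ and the right side is $1$.
\item The claim $s_1+s_2-R\ge cs_1$ fails on $D_3$ near the midpoint of $[x,y]$. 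At the midpoint $s_1=s_2=R/2$, so $s_1+s_2-R=0$ while $s_1=R/2$.
\end{itemize}
Neither claim is needed.
\begin{itemize}
\item On $D_1$, keep $e^{-\frac a2|z-y|}\le e^{-aR/4}$. Since $\int_{\R^n}k_\sigma(|w|)e^{-\frac a2|w|}\,dw<\infty$, the contribution is at most $C(1+\log R)e^{-aR/4}$.
\item On $D_3$, keep $e^{-\frac a2 s}$ in the radial integral with $s\ge R/2>\frac12$. This gives $Ce^{-aR/8}$.
\end{itemize}
This is exactly your preceding paragraph (where ``at least $e^{-cR}$'' should read ``at most''), and it already completes the case $R>1$.
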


The proof of Lemma \ref{lem-Bessel-kernel-convolution} is a direct calculation with the three regimes in \eqref{Bessel-kernel-definition}; it is postponed to Appendix \ref{ProofofTechnicalLemma}.

\begin{lem}[A choice of $k$ giving exponential decay]\label{lem-special-k}
Let $C_0>1$. There exist constants $C,c>0$, depending only on $C_0$, such that for any $\rho\geq0$,
\[
\inf_{k\in\Z_+}
\f{(C_0k)^{2k}}{(1+\rho^2)^k}
\leq
C\exp\{-c\rho\}.
\]
\end{lem}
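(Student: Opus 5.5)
The plan is to handle $\rho$ small and $\rho$ large separately, and in the large case to choose $k$ proportional to $\rho$ so that the quotient becomes a fixed geometric fraction raised to a power comparable to $\rho$.

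\emph{Small $\rho$.} If $\rho\le\rho_0$, where $\rho_0$ depends only on $C_0$ and is fixed below, we take $k=1$. The infimum is then at most $C_0^2$, and this is bounded by $C_0^2e^{c\rho_0}\exp\{-c\rho\}$. So it suffices to take $C\ge C_0^2e^{c\rho_0}$.

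\emph{Large $\rho$.} If $\rho>\rho_0$, we use $(1+\rho^2)^k\ge\rho^{2k}$, which gives
\[
\f{(C_0k)^{2k}}{(1+\rho^2)^k}\le\left(\f{C_0k}{\rho}\right)^{2k}.
\]
We choose $k=\lfloor \rho/(eC_0)\rfloor$. If $\rho_0\ge 2eC_0$, then $k\ge1$, and we also have $k\ge \rho/(2eC_0)$. Since $C_0k/\rho\le e^{-1}$, the bound becomes
\[
\left(\f{C_0k}{\rho}\right)^{2k}\le e^{-2k}\le\exp\left\{-\f{\rho}{eC_0}\right\}.
\]
This gives the claim with $c=1/(eC_0)$, $\rho_0=2eC_0$, and $C=C_0^2e^{c\rho_0}=C_0^2e^2$.

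The main point is only the choice $k\sim\rho/C_0$, which balances the factorial-type growth $(C_0k)^{2k}$ against the decay $\rho^{-2k}$. Everything else is elementary. Both $C$ and $c$ depend only on $C_0$, as required.
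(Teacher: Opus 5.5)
Your proof is correct and follows essentially the same route as the paper: take $k=1$ for bounded $\rho$, and for large $\rho$ use $(1+\rho^2)^k\ge\rho^{2k}$ with $k$ proportional to $\rho$, so that $C_0k/\rho$ stays below a fixed fraction less than one. The only difference is cosmetic: the paper takes $k=\lfloor a\rho\rfloor+1$ with $2C_0a<1$, whereas your choice $k=\lfloor\rho/(eC_0)\rfloor$ makes the constants $c=1/(eC_0)$ and $C=C_0^2e^2$ explicit.
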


\begin{proof}
Let $a>0$ be chosen later. If $0\leq\rho\leq \max\{2,a^{-1}\}$, we take $k=1$, and the estimate follows after increasing $C$. Assume now that $\rho>\max\{2,a^{-1}\}$. Choose
\[
k=\lfloor a\rho\rfloor+1,
\quad
\lfloor \al\rfloor:=\max\{j\in\Z:j\leq\al\}.
\]
Then $k\leq2a\rho$. Hence
\[
\f{(C_0k)^{2k}}{(1+\rho^2)^k}
\leq
\left(\f{C_0k}{\rho}\right)^{2k}
\leq
(2C_0a)^{2k}.
\]
Choosing $a>0$ so small that $2C_0a<1$, we get
\[
(2C_0a)^{2k}
\leq
C\exp\{-c\rho\}.
\]
This proves the lemma.
\end{proof}

\begin{cor}\label{corGvaGo}
Let $\theta_0\in(0,\f{\pi}{2})$ and $\lambda\in\Sigma_{\theta_0}$. Then, for any $x,y\in\om$ with $x\neq y$ and $r=|x-y|$,
\begin{equation}\label{resolvent-zero-exp}
\left|G_{\va,\lambda}(x,y)-G_{0,\lambda}(x,y)\right|
\leq
\f{C\va}{r^{n-1}}
\exp\{-c|\lambda|^{\f12}r\}.
\end{equation}
If, in addition, $\om$ is of class $C^{2,1}$, then
\begin{equation}\label{resolvent-first-exp}
\left|
\f{\pa}{\pa x_i}G_{\va,\lambda}^{\al\gamma}(x,y)
-
\f{\pa\Phi_{\va,j}^{\al\beta}}{\pa x_i}(x)
\f{\pa}{\pa x_j}G_{0,\lambda}^{\beta\gamma}(x,y)
\right|
\leq
\f{C\va\log(\va^{-1}r+2)}{r^n}
\exp\{-c|\lambda|^{\f12}r\},
\end{equation}
where $\Phi_{\va,j}^{\beta}$ is the Dirichlet corrector defined by \eqref{intro-dirichlet-corrector}. In the first estimate $C,c>0$ depend only on the data in Proposition \ref{Green-convergence-I}; in the second estimate they depend only on the data in Proposition \ref{Green-convergence-II}.
\end{cor}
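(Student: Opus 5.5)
The plan is to combine Propositions \ref{Green-convergence-I} and \ref{Green-convergence-II}, applied with $U=\om$, with Lemma \ref{lem-special-k}, optimizing over the free integer $k$. Fix $x\neq y$ in $\om$ and $\lambda\in\Sigma_{\theta_0}$, and set $r=|x-y|$. Proposition \ref{Green-convergence-I} gives, for every $k\in\Z_+$,
\[
\left|G_{\va,\lambda}(x,y)-G_{0,\lambda}(x,y)\right|
\leq
\f{C_1(C_1k)^{2k}\va}{(1+|\lambda|r^2)^k r^{n-1}},
\]
where $C_1$ depends only on the data of that proposition. Since the right-hand side is available for every $k$, we may take the infimum over $k\in\Z_+$ while keeping the factor $C_1\va r^{1-n}$ outside.

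We then apply Lemma \ref{lem-special-k} with $C_0=\max\{C_1,2\}$ and $\rho=|\lambda|^{\f12}r$, so that $\rho^2=|\lambda|r^2$. This yields
\[
\inf_{k}\f{(C_0k)^{2k}}{(1+|\lambda|r^2)^k}\leq C\exp\{-c|\lambda|^{\f12}r\},
\]
which gives \eqref{resolvent-zero-exp}. The constants $C$ and $c$ depend only on $C_0$, hence only on the data of Proposition \ref{Green-convergence-I}, and in particular they do not depend on $\lambda$, $x$, $y$, or $\va$.

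For \eqref{resolvent-first-exp}, when $\om$ is of class $C^{2,1}$, we argue in the same way starting from \eqref{Green-convergence-IIineq}. The prefactor $C\va\log(\va^{-1}r+2)r^{-n}$ does not depend on $k$, so the same infimum over $k$ applies and produces the exponential factor $\exp\{-c|\lambda|^{\f12}r\}$.

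No step here is a real obstacle; the only check needed is uniformity. The constant $C$ in Propositions \ref{Green-convergence-I} and \ref{Green-convergence-II} must be independent of $k$ and $\lambda$, which is exactly how those propositions are stated. This uniformity is what allows $k$ to be chosen depending on $\rho$, namely $k\sim a\rho$ as in the proof of Lemma \ref{lem-special-k}.
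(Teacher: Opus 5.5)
Your proposal is correct and is the paper's proof: apply Propositions \ref{Green-convergence-I} and \ref{Green-convergence-II} for every $k\in\Z_+$, then use Lemma \ref{lem-special-k} with $\rho=|\lambda|^{\f12}|x-y|$ to optimize over $k$, carrying the $k$-independent prefactors along unchanged. Taking $C_0=\max\{C_1,2\}$ so that $(C_1k)^{2k}\leq(C_0k)^{2k}$ is the only bookkeeping needed, and you handle it correctly.
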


\begin{proof}
Taking $\rho=|\lambda|^{\f12}|x-y|$ in Proposition~\ref{Green-convergence-I} and using Lemma \ref{lem-special-k} gives \eqref{resolvent-zero-exp}. The same choice of $k$ in Proposition~\ref{Green-convergence-II} gives \eqref{resolvent-first-exp}.
\end{proof}

For $n\geq3$, Proposition \ref{Green-ngeq3} and Lemma \ref{lem-special-k} give the Bessel-type bound needed below. In dimension two, however, the logarithmic estimate in Proposition \ref{Green-n2} is not sufficient by itself, because it contains $\log(R_\om|x-y|^{-1})$ rather than $\log(|\zeta|^{\f{1}{2}}|x-y|)$. We therefore record the following massive resolvent estimate in dimension two.

\begin{lem}[Two-dimensional massive resolvent estimate]\label{lem-2d-massive-resolvent}
Let $n=2$ and let $\zeta\in\C_+$. Then, for any $\va\in[0,1]$ and any $x,y\in\om$ with $x\neq y$,
\[
|G_{\va,-\zeta}(x,y)|
\leq
C(1+|\log(|\zeta|^{\f{1}{2}}|x-y|)|)
\exp\{-c|\zeta|^{\f{1}{2}}|x-y|\}.
\]
The constants $C,c>0$ depend only on $\mu$, $m$, $\theta_0$, $\tau$, $\nu$, and $\om$.
\end{lem}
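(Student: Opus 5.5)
Set $r=|x-y|$ and $\lambda=-\zeta$, so that $\lambda\in\Sigma_{\theta_0}$ with $|\lambda|=|\zeta|$. The argument splits into a short-range and a long-range regime at the natural scale $r\sim|\zeta|^{-1/2}$. In each regime I will combine an $L^2$ energy bound for $G_{\va,\lambda}(\cdot,y)$, the Caccioppoli iteration of Lemma \ref{Caccio-data} with $k$ chosen through Lemma \ref{lem-special-k}, and the localized $L^\infty$ estimate \eqref{Linftyestimate}.

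\textbf{Short range, $|\zeta|^{1/2}r\le 1$.} Here the target reduces to $C(1+\log(|\zeta|^{-1/2}r^{-1}))$, since the exponential is bounded below. I would split $G_{\va,\lambda}=G_{\va,0}+(G_{\va,\lambda}-G_{\va,0})$, using the resolvent identity
\[
G_{\va,\lambda}-G_{\va,0}=\lambda\,G_{\va,0}\star G_{\va,\lambda}.
\]
The two-dimensional bound $|G_{\va,0}(x,y)|\le C(1+\log(R_\om/r))$ from Theorem \ref{Green-n2} carries the logarithm. However, it is measured against $R_\om$, not $|\zeta|^{-1/2}$. I would therefore prove the following cleaner statement instead. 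Let $\rho=\min\{|\zeta|^{-1/2},R_\om\}$. For $r\le\rho$, compare $G_{\va,\lambda}(\cdot,y)$ with itself at scale $\rho$. Off-diagonal, $v=G_{\va,\lambda}(\cdot,y)$ solves a homogeneous equation on the annuli $U_{2^{j+1}r}(y)\setminus U_{2^{j}r}(y)$. The H\"older estimate \eqref{HolderEstimate} at each dyadic scale up to $\rho$ shows that the oscillation of $v$ over each annulus is bounded by a constant. This holds because the BMO bound of Theorem \ref{Green-n2} (or the energy bound $\|\nabla v\|_{L^2(\om\setminus B_s(y))}\le C$ from \cite[Theorem 4.8]{Wan23}) controls each dyadic annulus uniformly. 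Summing over the $\log_2(\rho/r)$ annuli gives
\[
|v(x)|\le C\log(\rho/r)+\sup_{|z-y|\sim\rho}|v(z)|.
\]
The remaining task is to show $|v(z)|\le C$ when $|z-y|\sim\rho$.

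\textbf{Long range, $|\zeta|^{1/2}r\ge1$, and the scale-$\rho$ bound.} Both statements follow from a uniform $L^2$ bound at scale $\rho$, after which the exponential factor is produced by the Caccioppoli iteration. First I would establish the energy estimate: for $\operatorname{Re}\zeta>0$,
\[
|\zeta|\int_{\om\setminus B_{s}(y)}|v|^2+\int_{\om\setminus B_s(y)}|\nabla v|^2\le C\bigl(1+\log_+(s^{-1}|\zeta|^{-1/2})\bigr)
\]
for $s\gtrsim|\zeta|^{-1/2}$. To obtain it, test the duality representation $\int v\ol F=u(y)$ with $F$ supported in $U_{2s}(z)\setminus B_s(y)$, where $u$ solves $(\cL_\va+\zeta)u=F$. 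I would then combine the global resolvent bound $\|u\|_{L^2}\le C|\zeta|^{-1}\|F\|_{L^2}$ with a local $L^\infty$ bound for $u$ near $y$, obtained from \eqref{Linftyestimate} on a ball of radius $|\zeta|^{-1/2}$ where $F$ vanishes. The resulting inequality has the form
\[
|u(y)|\le C(1+|\zeta|s^2)^{-k}|\zeta|^{n/4}\|u\|_{L^2}\cdot(\text{scale factors}).
\]
In $n=2$ this yields $\bigl(\fint_{U_s(z)}|v|^2\bigr)^{1/2}\le C(1+|\zeta|s^2)^{-k}(Ck)^{2k}$ for $|z-y|\sim s\ge|\zeta|^{-1/2}$. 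Finally, \eqref{Linftyestimate} at radius $s\sim r/3$, with $k$ chosen as in Lemma \ref{lem-special-k} applied with $\rho=|\zeta|^{1/2}r$, gives $|G_{\va,\lambda}(x,y)|\le Ce^{-c|\zeta|^{1/2}r}$. In particular this gives $|v(z)|\le C$ at $|z-y|\sim\rho$ when $\rho=|\zeta|^{-1/2}$. When $\rho=R_\om$, the needed bound is already contained in Theorem \ref{Green-n2}, because then $|\zeta|^{\sigma/2}R_\om^\sigma\le1$.

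\textbf{Main obstacle.} The hardest step is the duality argument, which must give an $L^2$ bound for $G_{\va,\lambda}(\cdot,y)$ at scale $|\zeta|^{-1/2}$ that is uniform in $\zeta$, free of the $R_\om$-logarithm, and valid uniformly in $\va$ including boundary points. I would first try the scaling trick: rescale $x\mapsto|\zeta|^{1/2}x$, which turns $\zeta$ into a unit-modulus parameter and $\va$ into $|\zeta|^{1/2}\va$. This uses that all constants in Section \ref{Green-functions-section} depend only on the $C^1$ character of the domain, which is preserved under dilation by factors $\ge1$ for small scales. I would then apply the unit-scale estimate, and track how the boundary character degenerates (it only improves) under dilation.
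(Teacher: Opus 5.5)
Your plan is a workable alternative to the paper's proof, but it is organized differently, and the step that produces the logarithm is not justified by the tools you cite.

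\textbf{How the paper does it.} The paper works in one stroke. It takes \eqref{Lipschitz1} with $n=2$ and chooses $k$ by Lemma \ref{lem-special-k}, giving $|\nabla_zG_{\va,-\zeta}(z,y)|\le C|z-y|^{-1}\exp\{-c|\zeta|^{1/2}|z-y|\}$. It then integrates this bound along a curve from $x$ to $\partial\om$ that stays at distance at least $|x-y|$ from $y$, using $G_{\va,-\zeta}(\cdot,y)=0$ on $\partial\om$. The ray from $y$ through $x$, stopped at its first exit point, is such a curve. With $u=|\zeta|^{1/2}|x-y|$, the single dyadic sum $\sum_{j\ge0}\exp\{-c2^ju\}$ gives $1+|\log u|$ for $u\le1$ and $e^{-cu}$ for $u>1$. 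There is no splitting into regimes, no duality, and no anchor at scale $|\zeta|^{-1/2}$.

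\textbf{Your long-range step.} Here you get the off-diagonal $L^2$ bound by duality, from $\|(\cL_\va+\bar\zeta)^{-1}\|_{L^2\to L^2}\le|\zeta|^{-1}$ and \eqref{Linftyestimate} around $y$. This is correct, with two adjustments. First, apply \eqref{Linftyestimate} on a ball of radius comparable to $s$, not $|\zeta|^{-1/2}$, so that you actually obtain the factor $(1+|\zeta|s^2)^{-k}$; afterwards use $s^{-1}\le|\zeta|^{1/2}$. Second, freeze the radius at a multiple of $r_0$ when $s$ is large. This step has the merit of needing no gradient bound for $G$, and it works in any dimension. The rescaling idea in your last paragraph is unnecessary. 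It is also unsafe: it replaces $\va$ by $|\zeta|^{1/2}\va$, possibly outside $[0,1]$, and it inflates the diameter on which some constants depend.

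\textbf{The gap: bounded oscillation on each dyadic annulus below $\rho$.}
\begin{itemize}
\item Applying \eqref{HolderEstimate} to $v$ itself only controls the oscillation by the $L^2$ average of $v$. That gives $\sup_{A_{s/2}}|v|\le C\sup_{A_s}|v|$, which iterates to a power of $\rho/r$, not a logarithm.
\item To use the BMO bound you must apply the estimate to $v-c$. This function solves $(\cL_\va+\zeta)(v-c)=-\zeta c$. The source is harmless only after a bootstrap, since $|\zeta|s^2\le1$ but $|c|$ is of logarithmic size.
\item More seriously, $v-c$ does not vanish on $\partial\om$, so annuli meeting the boundary are not covered by the BMO route.
\item Your fallback $\|\nabla v\|_{L^2(\om\setminus B_s(y))}\le C$, uniformly in $s$, is false in two dimensions. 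That energy grows like $\log(\rho/s)$, as your own later inequality with $\log_+$ reflects; only per-annulus bounds hold.
\end{itemize}

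\textbf{The repair.} Invoke \eqref{Lipschitz1} with $k=1$, i.e.\ $|\nabla_1G_{\va,-\zeta}(w,y)|\le C|w-y|^{-1}$. This gives bounded oscillation on every annulus, including near $\partial\om$. At that point your short-range step is exactly the paper's argument. Keeping the $k$-dependence in \eqref{Lipschitz1} would then make your duality step superfluous.
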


\begin{proof}
Let $r=|x-y|$ and set $\rho=|\zeta|^{\f12}$. By \eqref{Lipschitz1} and Lemma \ref{lem-special-k}, applied with $\lambda=-\zeta$, we have
\[
|\nabla_zG_{\va,-\zeta}(z,y)|
\leq
\f{C}{|z-y|}
\exp\{-c\rho|z-y|\},
\quad z\in\om,\ z\neq y.
\]
Indeed, in dimension $n=2$, \eqref{Lipschitz1} gives
\[
|\nabla_zG_{\va,-\zeta}(z,y)|
\leq
\f{C(Ck)^{2k}}
{(1+|\zeta||z-y|^2)^k |z-y|},
\]
and the choice of $k$ in Lemma \ref{lem-special-k} gives the desired exponential factor.

We now integrate this massive gradient estimate along the same boundary-chain argument used in the proof of the two-dimensional logarithmic estimate for Green's functions; see \cite[Remark 4.12]{Wan23}. More precisely, since $\om$ is a bounded $C^{1,1}$ domain, there is a rectifiable curve $\Gamma\subset\om$ joining $x$ to $\partial\om$ such that the portion of $\Gamma$ contained in each annulus
\[
A_j=
\left\{
z\in\om:\ 2^jr\leq |z-y|<2^{j+1}r
\right\},
\quad j\geq0,
\]
has length at most $C2^jr$. Since $G_{\va,-\zeta}(\cdot,y)=0$ on $\partial\om$ in the trace sense, we obtain
\[
\begin{aligned}
|G_{\va,-\zeta}(x,y)|
&\leq
\int_\Gamma |\nabla_zG_{\va,-\zeta}(z,y)|\ud s(z)
\\
&\leq
C\sum_{j\geq0}
\f{\exp\{-c|\zeta|^{\f{1}{2}} 2^jr\}}{2^jr}
\mathcal H^1(\Gamma\cap A_j)
\\
&\leq
C\sum_{j\geq0}\exp\{-c|\zeta|^{\f{1}{2}} 2^jr\}.
\end{aligned}
\]
It remains to estimate the dyadic sum. Set $u=|\zeta|^{\f{1}{2}} r$. If $0<u\leq1$, choose $J\in\Z_+$ such that
\[
2^Ju\leq1<2^{J+1}u.
\]
Then $J\leq C(1+|\log u|)$, and
\[
\sum_{0\leq j\leq J}\exp\{-c2^ju\}
\leq J+1
\leq C(1+|\log u|).
\]
For the tail, write $j=J+\ell$ with $\ell\geq1$. Since $2^Ju>\f12$, we have
\[
2^ju=2^\ell(2^Ju)\geq 2^{\ell-1},
\]
and therefore
\[
\sum_{j>J}\exp\{-c2^ju\}
\leq
\sum_{\ell\geq1}\exp\{-c2^{\ell-1}\}
\leq C.
\]
Thus, for $0<u\leq1$,
\[
\sum_{j\geq0}\exp\{-c2^ju\}
\leq
C(1+|\log u|).
\]
Since $\exp\{-au\}\geq c$ for $0<u\leq1$, this implies
\[
\sum_{j\geq0}\exp\{-c2^ju\}
\leq
C(1+|\log u|)\exp\{-au\}.
\]
If $u>1$, then
\[
\sum_{j\geq0}\exp\{-c2^ju\}
\leq
C\exp\{-cu\}
\leq
C(1+\log u)\exp\{-au\}
\]
after choosing $0<a<c$. Combining the two cases and taking $u=\rho r$ proves the lemma.
\end{proof}

We next present a resolvent derivative identity. It is an operator identity written in the language of distribution kernels and is the basic tool for obtaining enough decay in the inverse Laplace integral.

\begin{lem}\label{lem-resolvent-derivative-formula}
Let $\zeta\in\C_+$ and set
\[
D_{\va,\zeta}(x,y)
:=
G_{\va,-\zeta}(x,y)-G_{0,-\zeta}(x,y).
\]
Then, for any $M\in\Z_+$,
\begin{equation}\label{Dzeta-derivative-formula}
\pa_\zeta^M D_{\va,\zeta}
=
(-1)^M M!
\left(
\sum_{\ell=0}^M
G_{\va,-\zeta}^{\star\ell}
\star
D_{\va,\zeta}
\star
G_{0,-\zeta}^{\star(M-\ell)}
\right).
\end{equation}
The equality is first understood in the sense of distribution kernels. For $x\ne y$, the right-hand side is represented by the corresponding iterated spatial integral.
\end{lem}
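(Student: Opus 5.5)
The identity is purely operator-theoretic, so I will prove it first at the operator level on $L^2(\om;\C^m)$ and then pass to kernels. Write $R_\va(\zeta)=(\cL_\va+\zeta I)^{-1}$ and $R_0(\zeta)=(\cL_0+\zeta I)^{-1}$ for $\zeta\in\C_+$. By Lemma \ref{lem-parabolic-inversion} these have kernels $G_{\va,-\zeta}$ and $G_{0,-\zeta}$, so $D_{\va,\zeta}$ is the kernel of $D(\zeta):=R_\va(\zeta)-R_0(\zeta)$. Both resolvents are analytic in $\zeta$ on $\C_+$ with values in bounded operators. The standard resolvent identity gives
\[
\pa_\zeta^q R(\zeta)=(-1)^q q!\,R(\zeta)^{q+1},\quad q\ge0,
\]
for $R=R_\va$ and $R=R_0$.

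\textbf{Step 1: an algebraic identity.} I claim that
\[
R_\va^{M+1}-R_0^{M+1}=\sum_{\ell=0}^M R_\va^{\ell}(R_\va-R_0)R_0^{M-\ell}.
\]
This is the telescoping identity $a^{M+1}-b^{M+1}=\sum_{\ell} a^\ell(a-b)b^{M-\ell}$, which holds for non-commuting operators. To see this, expand $a^\ell(a-b)b^{M-\ell}=a^{\ell+1}b^{M-\ell}-a^\ell b^{M-\ell+1}$ and observe that consecutive terms cancel.

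Combining Step 1 with the derivative formula gives
\[
\pa_\zeta^M D(\zeta)=(-1)^M M!\sum_{\ell=0}^M R_\va(\zeta)^\ell D(\zeta)R_0(\zeta)^{M-\ell},
\]
as an identity of bounded operators.

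\textbf{Step 2: translation to kernels.} Composition of operators corresponds to the spatial composition $\star$ of their distribution kernels. The power $R^0=I$ corresponds to $\mathsf{Id}$ by \eqref{K0star}. Testing both sides against $F,H\in C_0^\infty$ and using uniqueness of kernels, as in the proof of Lemma \ref{lem-parabolic-inversion}, yields \eqref{Dzeta-derivative-formula} in the distribution sense.

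It remains to check two points:
\begin{enumerate}
\item that $\pa_\zeta$ commutes with pairing against test functions, which follows from analyticity in operator norm;
\item that for $x\neq y$ the iterated integrals converge absolutely, so that they represent the kernel.
\end{enumerate}

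\textbf{Main obstacle: absolute convergence off the diagonal.} For point 2, I will bound each factor by a Bessel-type kernel \eqref{Bessel-kernel-definition} with $\rho=|\zeta|^{1/2}$.

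For $n\ge3$, Proposition \ref{Green-ngeq3} together with Lemma \ref{lem-special-k} gives
\[
|G_{\va,-\zeta}|,\ |G_{0,-\zeta}|\le C\mathcal K_{2,\rho}^{(a)}.
\]
For $n=2$, the corresponding bound comes from Lemma \ref{lem-2d-massive-resolvent}. For the difference, Corollary \ref{corGvaGo} gives
\[
|D_{\va,\zeta}|\le C\va\,\mathcal K_{1,\rho}^{(a)}.
\]
Lemma \ref{lem-Bessel-kernel-convolution}, iterated $M$ times, then bounds the absolute integrand by $C\mathcal K_{1+2M,\rho}^{(a2^{-M})}(|x-y|)$. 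This is finite for $x\neq y$, and in fact finite everywhere once $1+2M>n$.

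By Fubini this justifies representing the right-hand side by the iterated integrals. Since both sides are then locally integrable functions agreeing as distributions, they coincide pointwise for $x\ne y$, using continuity off the diagonal. The only delicate point is keeping track of the constant $a$ halving at each step, which is harmless for fixed $M$.
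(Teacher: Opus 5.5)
Your proposal is correct and follows essentially the same route as the paper. Both arguments combine the operator identity $\pa_\zeta^M R(\zeta)=(-1)^M M!\,R(\zeta)^{M+1}$ with the non-commutative telescoping formula, pass to kernels by testing against $F,H\in C_0^\infty(\om;\C^m)$, and justify off-diagonal absolute convergence of the compositions with the Bessel-type bounds and Lemma \ref{lem-Bessel-kernel-convolution}. The differences are cosmetic: you bound $D_{\va,\zeta}$ through Corollary \ref{corGvaGo}, whereas the paper only cites the bounds on $G_{\va,-\zeta}$ and $G_{0,-\zeta}$, which already suffice for convergence, and you make explicit that $\pa_\zeta$ commutes with the pairing, which the paper leaves implicit.
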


\begin{proof}
Set
\[
R_\va(\zeta)=(\cL_\va+\zeta I)^{-1},
\quad
R_0(\zeta)=(\cL_0+\zeta I)^{-1}.
\]
Since $\operatorname{Re}\zeta>0$, these resolvents are holomorphic in $\zeta$ and satisfy
\[
\pa_\zeta R_\va(\zeta)
=
-R_\va(\zeta)^2,
\quad
\pa_\zeta R_0(\zeta)
=
-R_0(\zeta)^2.
\]
By induction,
\[
\pa_\zeta^M R_\va(\zeta)
=
(-1)^M M!R_\va(\zeta)^{M+1},
\quad
\pa_\zeta^M R_0(\zeta)
=
(-1)^M M!R_0(\zeta)^{M+1}.
\]
Therefore
\[
\pa_\zeta^M(R_\va(\zeta)-R_0(\zeta))
=
(-1)^M M!
\left(
R_\va(\zeta)^{M+1}-R_0(\zeta)^{M+1}
\right).
\]
For two possibly non-commuting operators $A$ and $B$, the identity
\[
A^{M+1}-B^{M+1}
=
\sum_{\ell=0}^M A^\ell(A-B)B^{M-\ell}
\]
holds. Applying this to $A=R_\va(\zeta)$ and $B=R_0(\zeta)$ gives the corresponding operator identity.

It remains to translate the operator identity into a kernel identity. Let $F,H\in C_0^\infty(\om;\C^m)$. The kernel of $R_\va(\zeta)$ is $G_{\va,-\zeta}$, the kernel of $R_0(\zeta)$ is $G_{0,-\zeta}$, and the kernel of $R_\va(\zeta)-R_0(\zeta)$ is $D_{\va,\zeta}$. Thus
\[
\begin{aligned}
&
\langle R_\va(\zeta)^\ell(R_\va(\zeta)-R_0(\zeta))
R_0(\zeta)^{M-\ell}F,H\rangle
\\
&\quad =
\int_{\om}\int_{\om}
(G_{\va,-\zeta}^{\star\ell}\star D_{\va,\zeta}\star G_{0,-\zeta}^{\star(M-\ell)})^{\al\gamma}(x,y)
F^\gamma(y)\overline{H^\al(x)}
\ud y\ud x.
\end{aligned}
\]
For instance, if $0<\ell<M$, then this composed kernel is
\[
\begin{aligned}
&
(G_{\va,-\zeta}^{\star\ell}\star D_{\va,\zeta}\star G_{0,-\zeta}^{\star(M-\ell)})(x,y)
\\
&\quad =
\int_{\om^M}
G_{\va,-\zeta}(z_0,z_1)
\cdots
G_{\va,-\zeta}(z_{\ell-1},z_\ell)
D_{\va,\zeta}(z_\ell,z_{\ell+1})
\\
&\quad\quad
\times
G_{0,-\zeta}(z_{\ell+1},z_{\ell+2})
\cdots
G_{0,-\zeta}(z_M,z_{M+1})
\ud z_1\cdots\ud z_M,
\end{aligned}
\]
where $z_0=x$, $z_{M+1}=y$, and matrix multiplication over adjacent indices is understood. The endpoint cases $\ell=0$ and $\ell=M$ are covered by the convention $K^{\star0}=\mathsf{Id}$. Since the preceding identity holds for all test functions $F$ and $H$, it gives \eqref{Dzeta-derivative-formula} as a distribution kernel identity. Away from the diagonal $x=y$, the estimates in Proposition \ref{Green-ngeq3} and Lemma \ref{lem-2d-massive-resolvent}, together with Lemma \ref{lem-Bessel-kernel-convolution}, show that the spatial compositions on the right-hand side are represented by locally absolutely convergent iterated integrals. Hence the same identity holds pointwise away from the diagonal.
\end{proof}

\begin{lem}\label{lem-resolvent-derivative-exp}
Let $D_{\va,\zeta}$ be as in Lemma~\ref{lem-resolvent-derivative-formula}. Assume that $\zeta\in\C_+$. If $2M>n+1$, then
\begin{equation}\label{Dzeta-derivative-exp}
|\pa_\zeta^M D_{\va,\zeta}(x,y)|
\leq
C_M\va
|\zeta|^{\f{n-1-2M}{2}}
\exp\{-c_M\op{Re}(\sqrt{\zeta})|x-y|\}.
\end{equation}
Here $\sqrt{\zeta}$ is the principal square root fixed in \eqref{zeta12}, and $C_M,c_M>0$ depend only on $n$, $m$, $\mu$, $\theta_0$, $\tau$, $\nu$, $M$, and $\om$.
\end{lem}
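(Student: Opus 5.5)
We start from the identity \eqref{Dzeta-derivative-formula} of Lemma~\ref{lem-resolvent-derivative-formula} and bound each of the $M+1$ composed kernels $G_{\va,-\zeta}^{\star\ell}\star D_{\va,\zeta}\star G_{0,-\zeta}^{\star(M-\ell)}$ pointwise. Put $\rho=|\zeta|^{\f12}$. The plan is to express each factor through the Bessel-type kernels \eqref{Bessel-kernel-definition} and then apply Lemma~\ref{lem-Bessel-kernel-convolution} $M$ times.

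\textbf{Step 1: bounds on the individual factors.} For $n\geq3$, Proposition~\ref{Green-ngeq3} with $\lambda=-\zeta$ (we drop the boundary factor), combined with Lemma~\ref{lem-special-k}, gives
\[
|G_{\va,-\zeta}(x,y)|\leq C|x-y|^{2-n}\exp\{-c\rho|x-y|\}=C\mathcal K^{(c)}_{2,\rho}(|x-y|).
\]
This holds for every $\va\in[0,1]$ and so covers $G_{0,-\zeta}$ as well. For $n=2$, Lemma~\ref{lem-2d-massive-resolvent} gives exactly $C\mathcal K^{(c)}_{2,\rho}$, which is the logarithmic regime $\sigma=n$. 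For the difference, Corollary~\ref{corGvaGo} gives
\[
|D_{\va,\zeta}(x,y)|\leq C\va|x-y|^{1-n}\exp\{-c\rho|x-y|\}=C\va\,\mathcal K^{(c)}_{1,\rho}(|x-y|).
\]
Since $\zeta\in\C_+$ we have $-\zeta\in\Sigma_{\theta_0}$ for any fixed $\theta_0<\f{\pi}{2}$, so all of these apply with constants independent of $\zeta$.

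\textbf{Step 2: iterated convolution.} The $\ell$-th term is a composition of $M$ Green factors, each of order $\sigma=2$, with one difference factor of order $\sigma=1$. Applying Lemma~\ref{lem-Bessel-kernel-convolution} repeatedly gives the bound $C_M\va\,\mathcal K^{(c2^{-M})}_{2M+1,\rho}(|x-y|)$. The constant in the exponent halves at each step, but $M$ is fixed, so this is harmless. Absolute convergence of the iterated integrals follows from the same bounds, and this justifies the pointwise form of \eqref{Dzeta-derivative-formula}. Because $2M>n+1$, that is $2M+1>n$, we are in the third regime, and
\[
\mathcal K_{2M+1,\rho}(r)=\rho^{n-2M-1}e^{-a\rho r}=|\zeta|^{\f{n-1-2M}{2}}e^{-a|\zeta|^{1/2}r}.
\]
Intermediate convolutions may pass through the borderline regime $\sigma=n$ or the regime $\sigma>n$, but Lemma~\ref{lem-Bessel-kernel-convolution} covers every regime, so the bookkeeping goes through.

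\textbf{Step 3: the exponent.} By \eqref{zeta12}, $|\zeta|^{\f12}\geq\op{Re}(\sqrt\zeta)$, so $e^{-a|\zeta|^{1/2}r}\leq e^{-a\op{Re}(\sqrt\zeta)r}$. Multiplying by the factor $M!(M+1)$ from the sum then gives \eqref{Dzeta-derivative-exp}.

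The main obstacle is Step 2. Lemma~\ref{lem-Bessel-kernel-convolution} must be applied with the sign conventions and domain restrictions handled correctly, and in $n=2$ the Green factors sit exactly in the borderline logarithmic regime. We also have to check that $\mathcal K_{1}\star\mathcal K_{2}$ followed by further convolutions never produces an unbounded constant at small $\rho r$. This will be verified by tracking the regime index as $1,3,5,\dots$, which exceeds $n$ after finitely many steps.
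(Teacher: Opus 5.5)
Your proposal is correct and follows the paper's proof essentially step for step. You bound $G_{\va,-\zeta}$ by $\mathcal K_{2,\rho}$ and $D_{\va,\zeta}$ by $\va\,\mathcal K_{1,\rho}$, insert these into \eqref{Dzeta-derivative-formula}, apply Lemma~\ref{lem-Bessel-kernel-convolution} $M$ times to reach $\mathcal K_{2M+1,\rho}$, and use $2M+1>n$. The only cosmetic difference is that you take $\rho=|\zeta|^{1/2}$ and convert to $\op{Re}(\sqrt{\zeta})$ in the exponent at the end, whereas the paper sets $\rho=\op{Re}(\sqrt{\zeta})$ from the start and uses $\rho\sim|\zeta|^{1/2}$ from \eqref{zeta12} to convert the power.
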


\begin{proof}
Put $\rho=\op{Re}(\sqrt{\zeta})$. Since $\zeta\in\C_+$, we have $-\zeta\in\Sigma_{\theta_0}$, and \eqref{zeta12} gives $\rho\sim|\zeta|^{\f12}$. By Proposition~\ref{Green-ngeq3}, Lemma \ref{lem-2d-massive-resolvent}, and Lemma~\ref{lem-special-k}, applied with $\lambda=-\zeta$, there exist constants $C>0$ and $a_0>0$ such that, for any $\va\in[0,1]$,
\be
|G_{\va,-\zeta}(x,y)|
\leq
C\mathcal K_{2,\rho}^{(a_0)}(|x-y|).
\label{Gbesel}
\ee
Moreover, by \eqref{resolvent-zero-exp}, after decreasing $a_0$ if necessary, we obtain
\be
|D_{\va,\zeta}(x,y)|
\leq
C\va\mathcal K_{1,\rho}^{(a_0)}(|x-y|).
\label{Gbesel1}
\ee

Using Lemma~\ref{lem-resolvent-derivative-formula}, we have
\[
\pa_\zeta^M D_{\va,\zeta}
=
(-1)^M M!
\left(
\sum_{\ell=0}^M
G_{\va,-\zeta}^{\star\ell}
\star
D_{\va,\zeta}
\star
G_{0,-\zeta}^{\star(M-\ell)}
\right).
\]
Fix $\ell\in\Z\cap[0,M]$. The corresponding term contains $\ell$ factors of $G_{\va,-\zeta}$, one factor of $D_{\va,\zeta}$, and $M-\ell$ factors of $G_{0,-\zeta}$. Applying Lemma~\ref{lem-Bessel-kernel-convolution} successively and using \eqref{Gbesel}--\eqref{Gbesel1}, we obtain
\[
\begin{aligned}
|
G_{\va,-\zeta}^{\star\ell}
\star
D_{\va,\zeta}
\star
G_{0,-\zeta}^{\star(M-\ell)}(x,y)|
&\leq
C_M\va
\mathcal K_{2\ell+1+2(M-\ell),\rho}^{(\f{a_0}{2^M})}(|x-y|)\\
&=
C_M\va
\mathcal K_{1+2M,\rho}^{(\f{a_0}{2^M})}(|x-y|).
\end{aligned}
\]
Set $a_M=2^{-M}a_0$. Since $2M>n+1$, we have $1+2M>n$. Hence, by the definition of $\mathcal K_{\sigma,\rho}^{(a_M)}$ in the case $\sg\in(n,+\ift)$,
\[
\mathcal K_{1+2M,\rho}^{(a_M)}(|x-y|)
=
\rho^{n-1-2M}
\exp\left\{-a_M\rho|x-y|\right\}.
\]
Summing over $\ell$ gives
\[
\left|
\pa_\zeta^M D_{\va,\zeta}(x,y)
\right|
\leq
C_M\va
\rho^{n-1-2M}
\exp\left\{-a_M\rho |x-y|\right\}.
\]
Since $\rho\sim|\zeta|^{\f12}$ on $\C_+$ and $n-1-2M<0$,
\[
\rho^{n-1-2M}
\leq
C|\zeta|^{\f{n-1-2M}{2}}.
\]
Renaming $a_M$ as $c_M$ proves \eqref{Dzeta-derivative-exp}.
\end{proof}

\subsection{Proof of Theorem \ref{intro-thm-parabolic-zero}}

Set
\[
\tau=t-s>0,
\quad
r=|x-y|.
\]
It follows from \eqref{parabolic-inversion-zeta} that
\[
G_\va(x,t;y,s)-G_0(x,t;y,s)
=
\f{1}{2\pi\op{i}}
\int_{\mathcal C_\gamma}
\exp\{\tau\zeta\}
D_{\va,\zeta}(x,y)\ud\zeta,
\]
where $D_{\va,\zeta}=G_{\va,-\zeta}-G_{0,-\zeta}$ and $\gamma>0$. Choose an integer $M\in\Z_+$ such that $2M>n+1$. Integrating by parts $M$ times along the vertical contour $\mathcal C_\gamma$, and using \eqref{Dzeta-derivative-exp} to justify the vanishing of the boundary terms, gives
\[
G_\va(x,t;y,s)-G_0(x,t;y,s)
=
\f{(-1)^M}{2\pi\op{i}\tau^M}
\int_{\mathcal C_\gamma}
\exp\{\tau\zeta\}
\pa_\zeta^M D_{\va,\zeta}(x,y)\ud\zeta.
\]
Therefore,
\[
\begin{aligned}
&|G_\va(x,t;y,s)-G_0(x,t;y,s)|
\\
&\quad\leq
C\va \tau^{-M}\exp\{\tau\gamma\}
\int_{-\infty}^{+\infty}
|\gamma+\op{i}\eta|^{\f{n-1-2M}{2}}
\exp\{-cr\operatorname{Re}\sqrt{\gamma+\op{i}\eta}\}\ud\eta.
\end{aligned}
\]
We use the elementary contour estimate
\[
\int_{-\infty}^{+\infty}
|\gamma+\op{i}\eta|^{\f{n-1-2M}{2}}
\exp\{
-c r\operatorname{Re}\sqrt{\gamma+\op{i}\eta}\}
\ud\eta
\leq
C\gamma^{\f{n+1-2M}{2}}\exp\{-c r\sqrt\gamma\}.
\]
It follows that
\begin{equation}\label{pre-gaussian-zero}
\left|
G_\va(x,t;y,s)-G_0(x,t;y,s)
\right|
\leq
C\va
\tau^{-M}
\gamma^{\f{n+1-2M}{2}}
\exp\left\{
\tau\gamma-c r\sqrt\gamma
\right\}.
\end{equation}

If $r^2\leq\tau$, take $\gamma=\tau^{-1}$. Then
\[
\tau^{-M}\gamma^{\f{n+1-2M}{2}}
=
\tau^{-\f{n+1}{2}},
\]
and $\exp\{\tau\gamma-cr\sqrt\gamma\}\leq C$. Since $r^2\leq\tau$, the Gaussian factor $\exp\{-\f{\kappa r^2}{\tau}\}$ is bounded below by a positive constant. Hence
\be
|G_\va(x,t;y,s)-G_0(x,t;y,s)|
\leq
\f{C\va}{\tau^{\f{n+1}{2}}}
\exp\left\{-\f{\kappa r^2}{\tau}\right\}.
\label{Gvakappa1}
\ee

If $r^2>\tau$, take
\[
\gamma=\f{ar^2}{\tau^2},
\]
where $a>0$ is sufficiently small. Then
\[
\tau\gamma-c r\sqrt\gamma
=
\f{ar^2}{\tau}
-
\f{c\sqrt a\, r^2}{\tau}
\leq
-\f{\kappa r^2}{\tau}
\]
for some $\kappa>0$. Moreover,
\[
\tau^{-M}\gamma^{\f{n+1-2M}{2}}
=
\tau^{-\f{n+1}{2}}
\left(\f{r^2}{\tau}\right)^{\f{n+1-2M}{2}}
\leq
\tau^{-\f{n+1}{2}},
\]
because $2M>n+1$ and $\f{r^2}{\tau}>1$. Combining this with \eqref{pre-gaussian-zero} proves \eqref{intro-parabolic-zero}.

\subsection{Proof of Theorem \ref{intro-thm-parabolic-first}}

The first-order estimate requires differentiating the resolvent error with respect to the spectral parameter. Since a direct pointwise bound for the differentiated first-order error is not sufficient, we first smooth the error by composing it with powers of the distribution kernel of the homogenized resolvent.

\begin{lem}\label{lem-first-resolvent-derivative}
Let $\zeta\in\C_+$ and, for fixed $i\in\Z\cap[1,n]$, define
\[
E_{\va,\zeta}^{\al\gamma}(x,y)
=
\f{\pa}{\pa x_i}G_{\va,-\zeta}^{\al\gamma}(x,y)
-
\f{\pa\Phi_{\va,j}^{\al\beta}}{\pa x_i}(x)
\f{\pa}{\pa x_j}G_{0,-\zeta}^{\beta\gamma}(x,y).
\]
If $2M>n+2$, then
\begin{equation}\label{first-resolvent-derivative-exp}
|\pa_\zeta^M E_{\va,\zeta}(x,y)|
\leq
C_M\va
|\zeta|^{\f{n-2M}{2}}
\log\left(2+\f{1}{\va|\zeta|^{\f12}}\right)
\exp\{
-c_M\op{Re}(\sqrt{\zeta})|x-y|\}.
\end{equation}
Here $\sqrt{\zeta}$ is the principal square root fixed in \eqref{zeta12}, and $C_M,c_M>0$ depend only on $n$, $m$, $\mu$, $\theta_0$, $\tau$, $\nu$, $M$, and $\om$.
\end{lem}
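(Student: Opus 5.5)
The plan is to express $\pa_\zeta^M E_{\va,\zeta}$ through resolvent powers so that the non-integrable pointwise singularity of $E_{\va,\zeta}$ never has to be integrated. By the resolvent identities in the proof of Lemma \ref{lem-resolvent-derivative-formula}, $\pa_\zeta^M G_{\va,-\zeta}=(-1)^MM!\,G_{\va,-\zeta}^{\star(M+1)}$, and the same holds for $\va=0$. Since $\Phi_{\va,j}$ does not depend on $\zeta$, we get
\[
(-1)^M(M!)^{-1}\pa_\zeta^M E_{\va,\zeta}
=\pa_{x_i}\bigl(G_{\va,-\zeta}^{\star(M+1)}\bigr)-\f{\pa\Phi_{\va,j}}{\pa x_i}\,\pa_{x_j}\bigl(G_{0,-\zeta}^{\star(M+1)}\bigr).
\]
Writing $G_{\va,-\zeta}^{\star(M+1)}=G_{\va,-\zeta}\star\bigl(G_{\va,-\zeta}^{\star M}-G_{0,-\zeta}^{\star M}\bigr)+G_{\va,-\zeta}\star G_{0,-\zeta}^{\star M}$, and using the definition of $E_{\va,\zeta}$ on the last piece, this becomes
\[
(-1)^M(M!)^{-1}\pa_\zeta^M E_{\va,\zeta}
=E_{\va,\zeta}\star G_{0,-\zeta}^{\star M}
+\nabla_1 G_{\va,-\zeta}\star\sum_{\ell=0}^{M-1}G_{\va,-\zeta}^{\star\ell}\star D_{\va,\zeta}\star G_{0,-\zeta}^{\star(M-1-\ell)}.
\]
Here the telescoping identity for $A^M-B^M$ is used exactly as in Lemma \ref{lem-resolvent-derivative-formula}.

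\emph{The second term is routine.} Set $\rho=\op{Re}\sqrt\zeta\sim|\zeta|^{1/2}$. Proposition \ref{Green-gradient} with Lemma \ref{lem-special-k} gives $|\nabla_1G_{\va,-\zeta}|\le C\mathcal K^{(a_0)}_{1,\rho}$. Together with \eqref{Gbesel}, \eqref{Gbesel1} and repeated use of Lemma \ref{lem-Bessel-kernel-convolution}, each summand is bounded by $C\va\,\mathcal K_{1+1+2(M-1),\rho}=C\va\,\mathcal K_{2M,\rho}$. Since $2M>n$, this equals $C\va\rho^{n-2M}e^{-c\rho|x-y|}$, which is better than \eqref{first-resolvent-derivative-exp} because it has no logarithm.

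\emph{The first term is the main obstacle.} Corollary \ref{corGvaGo} only gives $|E_{\va,\zeta}(x,z)|\lesssim\va\log(\cdot)|x-z|^{-n}$, which is not integrable in $z$ near $x$, so the composition cannot be estimated kernel-wise. Instead, fix $y$ and set $H=G_{0,-\zeta}^{\star M}(\cdot,y)$, $u_\va=G_{\va,-\zeta}\star H$ and $u_0=G_{0,-\zeta}\star H$. Then $(\cL_\va+\zeta)u_\va=(\cL_0+\zeta)u_0=H$ in $\om$, both vanish on $\pa\om$, and
\[
E_{\va,\zeta}\star G_{0,-\zeta}^{\star M}(x,y)=\pa_iu_\va(x)-\pa_i\Phi_{\va,j}(x)\,\pa_ju_0(x).
\]
I will apply Lemma \ref{local-gradient-approx-lambda} at $x_0=x$ with $\lambda=-\zeta$ and radius $r=c\min\{\rho^{-1},r_0\}$. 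If $\va\ge r$, the trivial bounds $|\nabla u_\va|,|\nabla u_0|\lesssim\mathcal K_{2M+1,\rho}$ together with \eqref{Phiuniform} suffice, because then $\rho^{-1}\lesssim\va$.

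\emph{Inputs to Lemma \ref{local-gradient-approx-lambda}.} On $U_{4r}(x)$ I need the following four bounds.
\begin{enumerate}
\item $|u_\va-u_0|=|D_{\va,\zeta}\star G_{0,-\zeta}^{\star M}|\le C\va\,\mathcal K_{2M+1,\rho}\le C\va\rho^{n-1-2M}e^{-c\rho|x-y|}$.
\item $|\nabla u_0|\le C\rho^{n-1-2M}e^{-c\rho|x-y|}$, obtained from $\nabla_1G_{0,-\zeta}\star H$.
\item $\|\nabla^2u_0\|_{L^\infty}+r^\rho[\nabla^2u_0]_{C^{0,\rho}}\le C\rho^{n-2M}e^{-c\rho|x-y|}$. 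This comes from the constant-coefficient $C^{2,\rho}$ Schauder estimate on $C^{2,1}$ domains (as in \cite[Lemma 4.3]{Wan23}, now with right-hand side). It uses $|H|\lesssim\rho^{n-2M}$ and $|\nabla H|\lesssim\mathcal K_{2M-1,\rho}\lesssim\rho^{n+1-2M}$; the latter holds because $2M-1>n$.
\item Replacing $|x-y|$ by $|z-y|$ for $z\in U_{4r}(x)$ costs only a constant factor in the exponential, since $r\lesssim\rho^{-1}$.
\end{enumerate}
With $|\lambda|r^2\sim1$, the lemma (with $k=1$) gives the contributions $r^{-1}\va\rho^{n-1-2M}$, then $\va r^{-1}\rho^{n-1-2M}$, and finally $\va\log(\va^{-1}r+2)\rho^{n-2M}$ plus $\va\rho^{n-2M}$. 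All of these are at most
\[
C\va|\zeta|^{\f{n-2M}{2}}\log\bigl(2+(\va|\zeta|^{1/2})^{-1}\bigr)e^{-c\rho|x-y|}.
\]
Adding the two terms then proves \eqref{first-resolvent-derivative-exp}.
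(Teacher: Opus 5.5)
Your argument matches the paper's proof structurally. You use the same kernel identity for $\pa_\zeta^M E_{\va,\zeta}$: the smoothed term $E_{\va,\zeta}\star G_{0,-\zeta}^{\star M}$ plus the $D_{\va,\zeta}$-sum. You also use the same smoothing through $u_\va=G_{\va,-\zeta}\star H$ and $u_0=G_{0,-\zeta}\star H$, the same application of Lemma \ref{local-gradient-approx-lambda}, and the same trivial case ($\va$ at least the radius) via \eqref{Phiuniform}. What differs is the localization radius. The paper uses $R=\f{1}{64}(|x-y|+\rho^{-1})$ from \eqref{Rdef}. That choice forces it to prove \eqref{kernel-local-comparison}, carry the factors $(1+\rho|x-y|)^{2N_0+2}$ and $\log(2+|x-y|/\va)$, and absorb them into the exponential. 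Your radius of order $\rho^{-1}$ keeps $|\zeta|r^2\leq C$, so $k=1$ suffices and comparing $|z-y|$ with $|x-y|$ is immediate. Your $C^{2,\sigma}$ input via $|\nabla H|\leq C\mathcal K_{2M-1,\rho}$ is also simpler than the paper's route through \eqref{HolderEstimate}. Both simplifications are sound when $\rho\geq r_0^{-1}$.

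The gap is the cap $r=c\min\{\rho^{-1},r_0\}$. The regime $\rho=\op{Re}\sqrt\zeta<r_0^{-1}$ is needed: the lemma is stated for all $\zeta\in\C_+$, and $\mathcal C_\gamma$ passes through small $|\zeta|$ when $\gamma$ is small, i.e.\ when $t-s$ is large. In that regime $r=cr_0$, and two of your claims fail.

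\begin{enumerate}
\item In the branch $\va\geq r$ you only know $\va\geq cr_0$, not $\rho^{-1}\leq C\va$. So the trivial bound $C\rho^{n-1-2M}$ is not dominated by $\va\rho^{n-2M}\log(2+(\va\rho)^{-1})$.
\item In the branch $\va<r$, the first two contributions are $C\va r_0^{-1}\rho^{n-1-2M}$. These exceed the target by a factor comparable to $(r_0\rho\log(2+(\va\rho)^{-1}))^{-1}$, which is unbounded as $\rho\to0$.
\end{enumerate}

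The underlying problem is that the Bessel bounds \eqref{first-proof-G-bound}--\eqref{first-proof-D-bound} are wasteful for small $\rho$ on a bounded domain. A repair is available.

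\begin{enumerate}
\item Since $|x-y|\leq\op{diam}\om$, for $\rho<\rho_0:=r_0^{-1}$ those bounds still hold with $\rho$ replaced by $\rho_0$, at the cost of a constant depending on $\om$. To see this, take $k=1$ in \eqref{Gvaldangeq3}, \eqref{Lipschitz1} and Proposition \ref{Green-convergence-I}; when $n=2$, use Theorem \ref{Green-n2} in place of Lemma \ref{lem-2d-massive-resolvent}.
\item Then run your argument with $\wt\rho=\max\{\rho,r_0^{-1}\}$ and radius $c\wt\rho^{-1}$, using $|\zeta|\leq2\wt\rho^{2}$ from \eqref{zeta12}.
\item This yields $C\va\wt\rho^{\,n-2M}\log(2+(\va\wt\rho)^{-1})\exp\{-c\wt\rho|x-y|\}$. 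That is bounded by the right-hand side of \eqref{first-resolvent-derivative-exp}, because $n-2M<0$ and $\wt\rho\geq\rho$.
\end{enumerate}

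The paper avoids your bookkeeping problem by not capping $R$, so $R^{-1}\leq64\rho$ always holds. However, it then applies Lemma \ref{local-gradient-approx-lambda} without checking $R<r_0/4$, which fails for small $\rho$. So the small-$\rho$ regime needs a separate argument there as well.
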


\begin{proof}
Put $\rho=\op{Re}(\sqrt{\zeta})$. Since $\zeta\in\C_+$, we have $-\zeta\in\Sigma_{\theta_0}$, and \eqref{zeta12} gives $\rho\sim|\zeta|^{\f12}$. Set
\[
L_{\va,\zeta}
:=
\log\left(2+\f{1}{\va|\zeta|^{\f12}}\right).
\]
By Proposition~\ref{Green-ngeq3}, Proposition~\ref{Green-gradient}, Lemma \ref{lem-2d-massive-resolvent}, and Lemma~\ref{lem-special-k}, applied with $\lambda=-\zeta$, there exist constants $C>0$ and $a_0>0$ such that
\begin{equation}\label{first-proof-G-bound}
|G_{\va,-\zeta}(x,y)|+|G_{0,-\zeta}(x,y)|
\leq
C\mathcal K_{2,\rho}^{(a_0)}(|x-y|)
\end{equation}
and
\begin{equation}\label{first-proof-gradG-bound}
|\nabla_xG_{\va,-\zeta}(x,y)|+|\nabla_xG_{0,-\zeta}(x,y)|
\leq
C\mathcal K_{1,\rho}^{(a_0)}(|x-y|).
\end{equation}
Moreover, by \eqref{resolvent-zero-exp}, after decreasing $a_0$ if necessary,
\begin{equation}\label{first-proof-D-bound}
|D_{\va,\zeta}(x,y)|
\leq
C\va
\mathcal K_{1,\rho}^{(a_0)}(|x-y|).
\end{equation}
In the following proof, $a_0$ may be decreased a finite number of times, but it remains a positive constant depending only on the fixed data.

We first estimate the smoothed first-order error $E_{\va,\zeta}\star G_{0,-\zeta}^{\star M}$. Set
\[
H_M(x,y):=G_{0,-\zeta}^{\star M}(x,y),
\]
and define
\begin{align*}
U_{\va,M}(x,y)
&=
\int_\om G_{\va,-\zeta}(x,z)H_M(z,y)\ud z,
\\
U_{0,M}(x,y)
&=
\int_\om G_{0,-\zeta}(x,z)H_M(z,y)\ud z.
\end{align*}
Thus $U_{0,M}=G_{0,-\zeta}^{\star(M+1)}$. Moreover, for any $y\in\om$,
\be
(\cL_\va+\zeta I)U_{\va,M}(\cdot,y)
=
H_M(\cdot,y)
=
(\cL_0+\zeta I)U_{0,M}(\cdot,y)
\label{LvazetaI}
\ee
in $\om$, with zero Dirichlet boundary condition on $\pa\om$. By the definition of $E_{\va,\zeta}$,
\begin{equation}\label{EM-smoothed-identity}
\left(E_{\va,\zeta}\star G_{0,-\zeta}^{\star M}\right)^{\al\gamma}(x,y)
=
\f{\pa}{\pa x_i}U_{\va,M}^{\al\gamma}(x,y)
-
\f{\pa\Phi_{\va,j}^{\al\beta}}{\pa x_i}(x)
\f{\pa}{\pa x_j}U_{0,M}^{\beta\gamma}(x,y).
\end{equation}

By \eqref{first-proof-G-bound}, \eqref{first-proof-gradG-bound}, and Lemma~\ref{lem-Bessel-kernel-convolution}, successively applied, we have
\begin{align}
|H_M(x,y)|
&\leq
C_M\mathcal K_{2M,\rho}^{(\f{a_0}{2^M})}(|x-y|),
\label{HM-bound}
\\
|U_{0,M}(x,y)|
&\leq
C_M\mathcal K_{2M+2,\rho}^{(\f{a_0}{2^{M+1}})}(|x-y|),
\label{U0M-bound}
\\
|\nabla_xU_{0,M}(x,y)|
&\leq
C_M\mathcal K_{2M+1,\rho}^{(\f{a_0}{2^{M+1}})}(|x-y|).
\label{U0M-first-bound}
\end{align}

Let $r=|x-y|$ and set
\be
R=\f{1}{64}\left(r+\rho^{-1}\right).
\label{Rdef}
\ee
Then
\be
R\geq\f{r}{64},
\quad
R\geq\f{1}{64\rho},
\quad
R^{-1}\leq C\rho,
\quad
1+\rho^2R^2\leq C(1+\rho r)^2.
\label{Rdefproperty}
\ee
For $z\in\om_{8R}(x)$ and any $\ell>n$, we have
\begin{equation}\label{kernel-local-comparison}
\mathcal K_{\ell,\rho}^{(a)}(|z-y|)
\leq
C_\ell
\mathcal K_{\ell,\rho}^{(\f a2)}(r).
\end{equation}
Indeed, if $\rho r\leq2$, the exponential factors are comparable. If $\rho r>2$, then $\rho^{-1}<\f{r}{2}$ and
\[
|z-y|
\geq
r-8R
=
r-\f{r+\rho^{-1}}{8}
\geq
\f{13}{16}r,
\]
which gives \eqref{kernel-local-comparison} from the definition of $\mathcal K_{\ell,\rho}^{(a)}$ in the case $\ell>n$.

Since $\cL_0$ has constant coefficients, the local second-order estimate with right-hand side, together with \eqref{HM-bound}, \eqref{U0M-bound}, and \eqref{kernel-local-comparison}, gives
\begin{equation}\label{U0M-second-bound}
\begin{aligned}
\|\nabla_x^2U_{0,M}(\cdot,y)\|_{L^\infty(\om_{4R}(x))}
&\leq
C R^{-2}\|U_{0,M}(\cdot,y)\|_{L^\infty(\om_{8R}(x))}
\\
&\quad+
C\|H_M(\cdot,y)\|_{L^\infty(\om_{8R}(x))}
+
C|\zeta|\|U_{0,M}(\cdot,y)\|_{L^\infty(\om_{8R}(x))}
\\
&\leq
C_M(1+|\zeta|R^2)R^{-2}
\mathcal K_{2M+2,\rho}^{(\f{a_0}{2^{M+2}})}(r)
+
C_M\mathcal K_{2M,\rho}^{(\f{a_0}{2^{M+1}})}(r).
\end{aligned}
\end{equation}

Choose $\sigma\in(0,1)$ such that $2M>n+2+\sigma$. Using \eqref{HolderEstimate}, \eqref{LvazetaI}, \eqref{HM-bound}, \eqref{U0M-bound}, and \eqref{kernel-local-comparison}, we have
\begin{equation}\label{U0Mholder}
\begin{aligned}
&\left[U_{0,M}(\cdot,y)\right]_{C^{0,\sigma}(\om_{5R}(x))}
\\
&\quad\leq
C R^{-\sigma}
\left(
\|U_{0,M}(\cdot,y)\|_{L^\infty(\om_{6R}(x))}
+
R^2(1+|\zeta|R^2)^{N_0}
\|H_M(\cdot,y)\|_{L^\infty(\om_{6R}(x))}
\right)
\\
&\quad\leq
C_MR^{-\sigma}
\mathcal K_{2M+2,\rho}^{(\f{a_0}{2^{M+2}})}(r)
+
C_MR^{2-\sigma}(1+|\zeta|R^2)^{N_0}
\mathcal K_{2M,\rho}^{(\f{a_0}{2^{M+1}})}(r).
\end{aligned}
\end{equation}
Replacing $M$ by $M-1$ in the same argument gives
\begin{equation}\label{HMholder}
\begin{aligned}
\left[H_M(\cdot,y)\right]_{C^{0,\sigma}(\om_{5R}(x))}
&\leq
C_MR^{-\sigma}
\mathcal K_{2M,\rho}^{(\f{a_0}{2^{M+1}})}(r)+
C_MR^{2-\sigma}(1+|\zeta|R^2)^{N_0}
\mathcal K_{2M-2,\rho}^{(\f{a_0}{2^{M}})}(r).
\end{aligned}
\end{equation}
The $C^{2,\sigma}$ estimate for $\cL_0+\zeta I$, together with \eqref{U0Mholder} and \eqref{HMholder}, yields
\begin{equation}\label{U0M-holder-bound}
\begin{aligned}
R^\sigma[\nabla_x^2U_{0,M}(\cdot,y)]_{C^{0,\sigma}(\om_{4R}(x))}
&\leq
C_MR^{-2}(1+|\zeta|R^2)^{N_0}
\mathcal K_{2M+2,\rho}^{(\f{a_0}{2^{M+2}})}(r)+
C_M\mathcal K_{2M,\rho}^{(\f{a_0}{2^{M+1}})}(r).
\end{aligned}
\end{equation}

Furthermore,
\[
U_{\va,M}(x,y)-U_{0,M}(x,y)
=
D_{\va,\zeta}\star G_{0,-\zeta}^{\star M}(x,y).
\]
Using \eqref{first-proof-G-bound}, \eqref{first-proof-D-bound}, and Lemma~\ref{lem-Bessel-kernel-convolution} successively, we get
\[
|U_{\va,M}(z,y)-U_{0,M}(z,y)|
\leq
C_M\va
\mathcal K_{1+2M,\rho}^{(\f{a_0}{2^M})}(|z-y|).
\]
By \eqref{kernel-local-comparison}, this implies that
\begin{equation}\label{UM-difference-bound}
|U_{\va,M}(z,y)-U_{0,M}(z,y)|
\leq
C_M\va
\mathcal K_{1+2M,\rho}^{(c_M)}(r)
\end{equation}
for any $z\in\om_{4R}(x)$.

We now apply the local first-order estimate on $\om_{4R}(x)$. Assume first that $0<\va<R$. Applying Lemma~\ref{local-gradient-approx-lambda} to $U_{\va,M}(\cdot,y)$ and $U_{0,M}(\cdot,y)$ in $\om_{4R}(x)$, with the resolvent parameter $-\zeta$, gives
\begin{equation}\label{EM-local-first-step}
\begin{aligned}
&
\left|
\f{\pa}{\pa x_i}U_{\va,M}^{\al\gamma}(x,y)
-
\f{\pa\Phi_{\va,j}^{\al\beta}}{\pa x_i}(x)
\f{\pa}{\pa x_j}U_{0,M}^{\beta\gamma}(x,y)
\right|
\\
&\leq
\f{C_M}{R}
\fint_{\om_{4R}(x)}
|U_{\va,M}(z,y)-U_{0,M}(z,y)|\ud z
+
C_M\va(1+\rho^2R^2)R^{-1}
\|\nabla_xU_{0,M}(\cdot,y)\|_{L^\infty(\om_{4R}(x))}
\\
&\quad
+
C_M\va
\log\left(2+\f{R}{\va}\right)
\|\nabla_x^2U_{0,M}(\cdot,y)\|_{L^\infty(\om_{4R}(x))}
+
C_M\va R^\sigma
[\nabla_x^2U_{0,M}(\cdot,y)]_{C^{0,\sigma}(\om_{4R}(x))}.
\end{aligned}
\end{equation}

Using \eqref{UM-difference-bound}, \eqref{U0M-first-bound}, \eqref{U0M-second-bound}, and \eqref{U0M-holder-bound} in \eqref{EM-local-first-step}, we obtain
\[
\begin{aligned}
&
\left|
\f{\pa}{\pa x_i}U_{\va,M}^{\al\gamma}(x,y)
-
\f{\pa\Phi_{\va,j}^{\al\beta}}{\pa x_i}(x)
\f{\pa}{\pa x_j}U_{0,M}^{\beta\gamma}(x,y)
\right|
\\
&\leq
C_M\va
\Bigg\{
R^{-1}\mathcal K_{1+2M,\rho}^{(c_M)}(r)
+
(1+\rho^2R^2)R^{-1}
\mathcal K_{2M+1,\rho}^{(c_M)}(r)
\\
&\quad+
\log\left(2+\f{R}{\va}\right)
\left[
(1+\rho^2R^2)R^{-2}
\mathcal K_{2M+2,\rho}^{(c_M)}(r)
+
\mathcal K_{2M,\rho}^{(c_M)}(r)
\right]
\\
&\quad+
R^{-2}(1+|\zeta|R^2)^{N_0}
\mathcal K_{2M+2,\rho}^{(c_M)}(r)
+
\mathcal K_{2M,\rho}^{(c_M)}(r)
\Bigg\}.
\end{aligned}
\]
Since $2M>n+2$, all the Bessel kernels above are in the case $\ell>n$. By \eqref{Rdef} and \eqref{Rdefproperty}, we have
\[
R^{-1}\leq C\rho,
\quad
R^{-2}\leq C\rho^2,
\quad
1+\rho^2R^2\leq C(1+\rho r)^2,
\quad
|\zeta|\sim\rho^2.
\]
It follows that
\[
\begin{aligned}
&
\left|
\f{\pa}{\pa x_i}U_{\va,M}^{\al\gamma}(x,y)
-
\f{\pa\Phi_{\va,j}^{\al\beta}}{\pa x_i}(x)
\f{\pa}{\pa x_j}U_{0,M}^{\beta\gamma}(x,y)
\right|
\\
&\leq
C_M\va
\left[
1+\log\left(2+\f{R}{\va}\right)
\right]
(1+\rho r)^{2N_0+2}
\rho^{n-2M}
\exp\{-c_M\rho r\}.
\end{aligned}
\]
The polynomial factor can be absorbed into the exponential:
\[
(1+\rho r)^{2N_0+2}\exp\{-c_M\rho r\}
\leq
C_M\exp\left\{-\f{c_M}{2}\rho r\right\}.
\]
By \eqref{Rdef},
\[
\log\left(2+\f{R}{\va}\right)
\leq
C\log\left(2+\f{r}{\va}\right)
+
C\log\left(2+\f{1}{\va\rho}\right).
\]
The first logarithm is also controlled by the second after absorbing a further polynomial factor into the exponential. Indeed, setting $u=\rho r$ and $B=(\va\rho)^{-1}$,
\[
\log(2+Bu)\leq C(1+u)\log(2+B),
\]
and hence
\[
\log\left(2+\f{r}{\va}\right)
(1+\rho r)^{2N_0+2}
\exp\{-c_M\rho r\}
\leq
C_M
\log\left(2+\f{1}{\va\rho}\right)
\exp\left\{-\f{c_M}{2}\rho r\right\}.
\]
By \eqref{zeta12},
\[
\log\left(2+\f{1}{\va\rho}\right)
\leq
C\log\left(2+\f{1}{\va|\zeta|^{\f12}}\right)
=
C L_{\va,\zeta}.
\]
Combining this with \eqref{EM-smoothed-identity}, we obtain
\begin{equation}\label{EM-small-eps-case}
\left|
E_{\va,\zeta}\star G_{0,-\zeta}^{\star M}(x,y)
\right|
\leq
C_M\va L_{\va,\zeta}
\mathcal K_{2M,\rho}^{(c_M)}(r)
\end{equation}
when $\va\in(0,R)$.

It remains to consider the case $\va\geq R$. By \eqref{HM-bound}, \eqref{U0M-bound}, \eqref{U0M-first-bound}, and the uniform bound for the Dirichlet corrector gradient \eqref{Phiuniform}, we have
\[
|\nabla_xU_{\va,M}(x,y)|
+
\left|
\f{\pa\Phi_{\va,j}}{\pa x_i}(x)
\f{\pa U_{0,M}}{\pa x_j}(x,y)
\right|
\leq
C_M\mathcal K_{2M+1,\rho}^{(\f{a_0}{2^M})}(r).
\]
Since $\va\geq R\geq(64\rho)^{-1}$ and $L_{\va,\zeta}\geq\log2$,
\[
\mathcal K_{2M+1,\rho}^{(\f{a_0}{2^M})}(r)
=
\rho^{n-1-2M}\exp\left\{-\f{a_0}{2^M}\rho r\right\}
\leq
C_M\va L_{\va,\zeta}
\rho^{n-2M}\exp\left\{-\f{a_0}{2^M}\rho r\right\}.
\]
Hence,
\begin{equation}\label{EM-large-eps-case}
|E_{\va,\zeta}\star G_{0,-\zeta}^{\star M}(x,y)|
\leq
C_M\va L_{\va,\zeta}
\mathcal K_{2M,\rho}^{(\f{a_0}{2^M})}(r).
\end{equation}
Combining \eqref{EM-small-eps-case} and \eqref{EM-large-eps-case}, we conclude that
\begin{equation}\label{first-error-composition-estimate}
|E_{\va,\zeta}\star G_{0,-\zeta}^{\star M}(x,y)|
\leq
C_M\va L_{\va,\zeta}
\mathcal K_{2M,\rho}^{(c_M)}(|x-y|).
\end{equation}

We now derive the resolvent identity for $\pa_\zeta^M E_{\va,\zeta}$. The point is to perform the differentiation first at the operator level and then pass to distribution kernels. Let
\[
R_\va(\zeta)=(\cL_\va+\zeta I)^{-1},
\quad
R_0(\zeta)=(\cL_0+\zeta I)^{-1},
\quad
\operatorname{Re}\zeta>0.
\]
As in the proof of Lemma \ref{lem-resolvent-derivative-formula}, we have
\[
\pa_\zeta^M R_\va(\zeta)
=
(-1)^M M!R_\va(\zeta)^{M+1},
\quad
\pa_\zeta^M R_0(\zeta)
=
(-1)^M M!R_0(\zeta)^{M+1}.
\]

Next, we introduce the two first-order operators whose kernels appear in the definition of $E_{\va,\zeta}$. Let $\nabla_iR_\va(\zeta)$ denote the operator whose distribution kernel is
\[
\f{\pa}{\pa x_i}G_{\va,-\zeta}(x,y).
\]
Equivalently, for a test function $F$,
\[
\nabla_iR_\va(\zeta)F
=
\f{\pa}{\pa x_i}\left(R_\va(\zeta)F\right).
\]
Let $\mathcal P_{\va,i}R_0(\zeta)$ denote the operator whose distribution kernel is
\[
\f{\pa\Phi_{\va,j}^{\al\beta}}{\pa x_i}(x)
\f{\pa}{\pa x_j}G_{0,-\zeta}^{\beta\gamma}(x,y).
\]
In other words, if $v=R_0(\zeta)F$, then
\[
(\mathcal P_{\va,i}R_0(\zeta)F)^\al
=
\f{\pa\Phi_{\va,j}^{\al\beta}}{\pa x_i}
\f{\pa v^\beta}{\pa x_j}.
\]
Thus $E_{\va,\zeta}$ is the distribution kernel of
\[
\nabla_iR_\va(\zeta)-\mathcal P_{\va,i}R_0(\zeta).
\]

Since neither $\nabla_i$ nor $\mathcal P_{\va,i}$ depends on $\zeta$, differentiation with respect to $\zeta$ commutes with these operators in the sense of distributions. Indeed, for any $F,H\in C_0^\infty(\om;\C^m)$,
\[
\begin{aligned}
\pa_\zeta^M
\left\langle \nabla_iR_\va(\zeta)F,H\right\rangle
&=
\left\langle \nabla_i\pa_\zeta^M R_\va(\zeta)F,H\right\rangle,
\\
\pa_\zeta^M
\left\langle \mathcal P_{\va,i}R_0(\zeta)F,H\right\rangle
&=
\left\langle \mathcal P_{\va,i}\pa_\zeta^M R_0(\zeta)F,H\right\rangle.
\end{aligned}
\]
Therefore the distribution kernel $\pa_\zeta^M E_{\va,\zeta}$ is the kernel of
\[
\nabla_i\pa_\zeta^M R_\va(\zeta)
-
\mathcal P_{\va,i}\pa_\zeta^M R_0(\zeta).
\]
Using the resolvent derivative formula above, we get
\[
\pa_\zeta^M E_{\va,\zeta}
=
(-1)^M M!
\left[
\nabla_i\bigl(R_\va(\zeta)^{M+1}\bigr)
-
\mathcal P_{\va,i}\bigl(R_0(\zeta)^{M+1}\bigr)
\right]
\]
in the sense of distribution kernels.

We decompose the expression in brackets as
\[
\begin{aligned}
&\nabla_i\bigl(R_\va(\zeta)^{M+1}\bigr)
-
\mathcal P_{\va,i}\bigl(R_0(\zeta)^{M+1}\bigr)
\\
&\quad =
\left(\nabla_iR_\va(\zeta)-\mathcal P_{\va,i}R_0(\zeta)\right)R_0(\zeta)^M
+
\nabla_iR_\va(\zeta)\left(R_\va(\zeta)^M-R_0(\zeta)^M\right).
\end{aligned}
\]
Using
\[
R_\va(\zeta)^M-R_0(\zeta)^M
=
\sum_{\ell=0}^{M-1}
R_\va(\zeta)^\ell
\left(R_\va(\zeta)-R_0(\zeta)\right)
R_0(\zeta)^{M-1-\ell},
\]
we obtain the kernel identity
\[
\begin{aligned}
\pa_\zeta^M E_{\va,\zeta}
&=
(-1)^M M!
\Bigg[
E_{\va,\zeta}\star G_{0,-\zeta}^{\star M}+
\sum_{\ell=0}^{M-1}
\nabla_xG_{\va,-\zeta}
\star
G_{\va,-\zeta}^{\star\ell}
\star
D_{\va,\zeta}
\star
G_{0,-\zeta}^{\star(M-1-\ell)}
\Bigg].
\end{aligned}
\]
The first term is estimated by \eqref{first-error-composition-estimate}. For the sum, we use \eqref{first-proof-gradG-bound}, \eqref{first-proof-G-bound}, \eqref{first-proof-D-bound}, and Lemma~\ref{lem-Bessel-kernel-convolution}. For each $\ell=0,\ldots,M-1$,
\[
\begin{aligned}
&|
\nabla_xG_{\va,-\zeta}
\star
G_{\va,-\zeta}^{\star\ell}
\star
D_{\va,\zeta}
\star
G_{0,-\zeta}^{\star(M-1-\ell)}(x,y)|
\\
&\quad\leq
C_M\va
\mathcal K_{1+2\ell+1+2(M-1-\ell),\rho}^{(c_M)}(|x-y|)
\\
&\quad=
C_M\va
\mathcal K_{2M,\rho}^{(c_M)}(|x-y|).
\end{aligned}
\]
Since $L_{\va,\zeta}\geq\log2$, the sum is also bounded by
\[
C_M\va L_{\va,\zeta}
\mathcal K_{2M,\rho}^{(c_M)}(|x-y|).
\]
Therefore,
\[
|\pa_\zeta^M E_{\va,\zeta}(x,y)|
\leq
C_M\va L_{\va,\zeta}
\mathcal K_{2M,\rho}^{(c_M)}(|x-y|).
\]
Because $2M>n+2$, we have $2M>n$. Hence
\[
\mathcal K_{2M,\rho}^{(c_M)}(|x-y|)
=
\rho^{n-2M}\exp\{-c_M\rho|x-y|\}.
\]
Using $\rho\sim|\zeta|^{\f12}$ on $\C_+$ and the definition of $L_{\va,\zeta}$, we obtain \eqref{first-resolvent-derivative-exp}.

Here the notation $\nabla_i(R_\va(\zeta)^{M+1})$ means that we first take the distribution kernel of $R_\va(\zeta)^{M+1}$ and then differentiate this kernel with respect to its first spatial variable. Equivalently,
\[
\nabla_i\bigl(R_\va(\zeta)^{M+1}\bigr)
=
(\nabla_iR_\va(\zeta))R_\va(\zeta)^M.
\]
Similarly,
\[
\mathcal P_{\va,i}\bigl(R_0(\zeta)^{M+1}\bigr)
=
(\mathcal P_{\va,i}R_0(\zeta))R_0(\zeta)^M.
\]
Consequently,
\[
\begin{aligned}
&
\nabla_i\bigl(R_\va(\zeta)^{M+1}\bigr)
-
\mathcal P_{\va,i}\bigl(R_0(\zeta)^{M+1}\bigr)
\\
&\quad =
\left[
\nabla_iR_\va(\zeta)-\mathcal P_{\va,i}R_0(\zeta)
\right]R_0(\zeta)^M
+
\nabla_iR_\va(\zeta)
\left[
R_\va(\zeta)^M-R_0(\zeta)^M
\right].
\end{aligned}
\]
Finally, using the identity
\[
R_\va(\zeta)^M-R_0(\zeta)^M
=
\sum_{\ell=0}^{M-1}
R_\va(\zeta)^\ell
\left[
R_\va(\zeta)-R_0(\zeta)
\right]
R_0(\zeta)^{M-1-\ell},
\]
we obtain
\[
\begin{aligned}
\pa_\zeta^M E_{\va,\zeta}
&=
(-1)^M M!
\left[
E_{\va,\zeta}\star G_{0,-\zeta}^{\star M}
+
\sum_{\ell=0}^{M-1}
\nabla_xG_{\va,-\zeta}
\star
G_{\va,-\zeta}^{\star\ell}
\star
D_{\va,\zeta}
\star
G_{0,-\zeta}^{\star(M-1-\ell)}
\right].
\end{aligned}
\]
Here all products on the operator side have been written as spatial kernel compositions on the kernel side.

The first term is estimated by \eqref{first-error-composition-estimate}. For the remaining terms, fix $\ell\in\Z\cap[0,M-1]$. Using \eqref{first-proof-G-bound}, \eqref{first-proof-gradG-bound}, \eqref{first-proof-D-bound}, and Lemma~\ref{lem-Bessel-kernel-convolution} successively, we have
\[
\begin{aligned}
&
|\nabla_xG_{\va,-\zeta}\star
G_{\va,-\zeta}^{\star\ell}\star
D_{\va,\zeta}\star
G_{0,-\zeta}^{\star(M-1-\ell)}(x,y)|
\\
&\quad\leq
C_M\va
\mathcal K_{1+2\ell+1+2(M-1-\ell),\rho}^{(c_M)}(|x-y|)
=
C_M\va
\mathcal K_{2M,\rho}^{(c_M)}(|x-y|).
\end{aligned}
\]
Since $L_{\va,\zeta}\geq\log2$, this gives
\[
\begin{aligned}
&
|\nabla_xG_{\va,-\zeta}\star
G_{\va,-\zeta}^{\star\ell}
\star
D_{\va,\zeta}
\star
G_{0,-\zeta}^{\star(M-1-\ell)}(x,y)|
\\
&\quad\leq
C_M\va L_{\va,\zeta}
\mathcal K_{2M,\rho}^{(c_M)}(|x-y|).
\end{aligned}
\]
Combining this estimate with \eqref{first-error-composition-estimate}, and summing over $\ell$, we get
\[
|\pa_\zeta^M E_{\va,\zeta}(x,y)|
\leq
C_M\va L_{\va,\zeta}
\mathcal K_{2M,\rho}^{(c_M)}(|x-y|).
\]
Since $2M>n+2$, in particular $2M>n$, we have
\[
\mathcal K_{2M,\rho}^{(c_M)}(|x-y|)
=
\rho^{n-2M}
\exp\{-c_M\rho|x-y|\}.
\]
Finally, $\rho\sim|\zeta|^{\f12}$ and $n-2M<0$, so
\[
\rho^{n-2M}
\leq
C|\zeta|^{\f{n-2M}{2}}.
\]
This proves \eqref{first-resolvent-derivative-exp}.
\end{proof}

\begin{proof}[Proof of Theorem \ref{intro-thm-parabolic-first}]
Set
\[
\tau=t-s>0,
\quad
r=|x-y|.
\]
Using \eqref{parabolic-inversion-zeta}, we have
\[
\begin{aligned}
&
\f{\pa}{\pa x_i}G_\va^{\al\gamma}(x,t;y,s)
-
\f{\pa\Phi_{\va,j}^{\al\beta}}{\pa x_i}(x)
\f{\pa}{\pa x_j}G_0^{\beta\gamma}(x,t;y,s)=
\f{1}{2\pi\op{i}}
\int_{\mathcal C_\gamma}
\exp\{\tau\zeta\}
E_{\va,\zeta}^{\al\gamma}(x,y)\ud\zeta .
\end{aligned}
\]
Choose an integer $M\in\Z_+$ such that $2M>n+2$. Integrating by parts $M$ times along $\mathcal C_\gamma$ and using Lemma~\ref{lem-first-resolvent-derivative}, we get
\[
\begin{aligned}
&
\left|
\f{\pa}{\pa x_i}G_\va^{\al\gamma}(x,t;y,s)
-
\f{\pa\Phi_{\va,j}^{\al\beta}}{\pa x_i}(x)
\f{\pa}{\pa x_j}G_0^{\beta\gamma}(x,t;y,s)
\right|
\\
&\quad\leq
C\va\tau^{-M}\exp\{\tau\gamma\}
\int_{-\infty}^{+\infty}
|\gamma+\op{i}\eta|^{\f{n-2M}{2}}
\log\left(2+\f{1}{\va|\gamma+\op{i}\eta|^{\f12}}\right)\exp\{
-c r\operatorname{Re}\sqrt{\gamma+\op{i}\eta}
\}
\ud\eta .
\end{aligned}
\]
The corresponding contour estimate gives
\[
\begin{aligned}
&\int_{-\infty}^{+\infty}
|\gamma+\op{i}\eta|^{\f{n-2M}{2}}
\log\left(2+\f{1}{\va|\gamma+\op{i}\eta|^{\f12}}\right)
\exp\{
-c r\operatorname{Re}\sqrt{\gamma+\op{i}\eta}
\}
\ud\eta
\\
&\quad\leq
C
\gamma^{\f{n+2-2M}{2}}
\log\left(2+\f{1}{\va\sqrt\gamma}\right)
\exp\{-c r\sqrt\gamma\}.
\end{aligned}
\]
Therefore
\begin{equation}\label{pre-gaussian-first}
\begin{aligned}
&
\left|
\f{\pa}{\pa x_i}G_\va^{\al\gamma}(x,t;y,s)
-
\f{\pa\Phi_{\va,j}^{\al\beta}}{\pa x_i}(x)
\f{\pa}{\pa x_j}G_0^{\beta\gamma}(x,t;y,s)
\right|
\\
&\quad
\leq
C\va
\tau^{-M}
\gamma^{\f{n+2-2M}{2}}
\log\left(2+\f{1}{\va\sqrt\gamma}\right)
\exp\left\{
\tau\gamma-c r\sqrt\gamma
\right\}.
\end{aligned}
\end{equation}

If $r^2\leq\tau$, take $\gamma=\tau^{-1}$. Then
\[
\tau^{-M}\gamma^{\f{n+2-2M}{2}}
=
\tau^{-\f{n+2}{2}},
\quad
\log\left(2+\f{1}{\va\sqrt\gamma}\right)
=
\log\left(2+\f{\sqrt\tau}{\va}\right).
\]
The exponential term is bounded above by a constant, and the Gaussian factor is bounded below by a positive constant. This gives the desired estimate in the case $r^2\leq\tau$.

If $r^2>\tau$, take
\[
\gamma=\f{ar^2}{\tau^2},
\]
where $a>0$ is sufficiently small. Then
\[
\exp\{\tau\gamma-c r\sqrt\gamma\}
\leq
\exp\left\{-\f{cr^2}{\tau}\right\}.
\]
Since $2M>n+2$,
\[
\tau^{-M}\gamma^{\f{n+2-2M}{2}}
=
\tau^{-\f{n+2}{2}}
\left(\f{r^2}{\tau}\right)^{\f{n+2-2M}{2}}
\leq
\tau^{-\f{n+2}{2}}.
\]
Moreover, since $r^2>\tau$,
\[
\f{1}{\sqrt\gamma}
=
\f{\tau}{\sqrt a\, r}
\leq
C\sqrt\tau,
\]
and hence
\[
\log\left(2+\f{1}{\va\sqrt\gamma}\right)
\leq
C\log\left(2+\f{\sqrt\tau}{\va}\right).
\]
Combining these estimates with \eqref{pre-gaussian-first} proves \eqref{intro-parabolic-first}.
\end{proof}

\appendix

\section{Proof of Lemma \ref{lem-Bessel-kernel-convolution}}\label{ProofofTechnicalLemma}

It is enough to prove the estimate with $\om$ replaced by $\R^n$, since the integrand is nonnegative and $\om\subset\R^n$. We first remove the parameter $\rho$ by scaling. Set
\[
X=\rho x,
\quad
Y=\rho y,
\quad
Z=\rho z.
\]
Then
\[
\mathcal K_{\sigma,\rho}^{(a)}(|x-z|)
=
\rho^{n-\sigma}k_\sigma^{(a)}(|X-Z|),
\]
where
\[
k_\sigma^{(a)}(r)
=
\begin{cases}
r^{\sigma-n}\exp\{-ar\},
&\sg\in(0,n),\\[4pt]
\left(1+|\log r|\right)\exp\{-ar\},
&\sigma=n,\\[4pt]
\exp\{-ar\},
&\sg\in(n,+\ift).
\end{cases}
\]
Since $\ud z=\rho^{-n}\ud Z$, we have
\[
\begin{aligned}
&\int_{\R^n}
\mathcal K_{\sigma,\rho}^{(a)}(|x-z|)
\mathcal K_{\sigma',\rho}^{(a)}(|z-y|)
\ud z
\\
&\quad
=
\rho^{n-\sigma-\sigma'}
\int_{\R^n}
k_\sigma^{(a)}(|X-Z|)
k_{\sigma'}^{(a)}(|Z-Y|)
\ud Z.
\end{aligned}
\]
Thus, it suffices to prove that, for $R=|X-Y|$,
\begin{equation}\label{dimensionless-Bessel-convolution}
\int_{\R^n}
k_\sigma^{(a)}(|X-Z|)
k_{\sigma'}^{(a)}(|Z-Y|)
\ud Z
\leq
C k_{\sigma+\sigma'}^{(\f{a}{2})}(R).
\end{equation}
If $R=0$, then the left-hand side is finite when $\sigma+\sigma'>n$, by local integrability near the common singularity and exponential decay at infinity, while $k_{\sigma+\sigma'}^{(\f{a}{2})}(0)=1$ in this case. When $\sigma+\sigma'\leq n$, the right-hand side is infinite in the limiting sense. Hence, it is enough to consider $R>0$. By translation and rotation, we may assume that $X=0$ and $Y=Re_1$, where $e_1=(1,0,\ldots,0)\in\R^n$.

We prove \eqref{dimensionless-Bessel-convolution} by considering separately the cases $R\in(0,2]$ and $R>2$.

\subsection{The case $R\in(0,2]$}

Since exponential factors are bounded above by $1$, the contribution of $|Z|\leq4$ is controlled by
\[
\int_{B_4}
\psi_\sigma(|Z|)
\psi_{\sigma'}(|Z-Re_1|)
\ud Z,
\]
where
\[
\psi_\theta(r)
=
\begin{cases}
r^{\theta-n},&\theta\in(0,n),\\[4pt]
1+|\log r|,&\theta=n,\\[4pt]
1,&\theta\in(n,+\ift).
\end{cases}
\]
We claim that
\begin{equation}\label{local-Riesz-convolution}
\int_{B_4}
\psi_\sigma(|Z|)
\psi_{\sigma'}(|Z-Re_1|)
\ud Z
\leq
C\psi_{\sigma+\sigma'}(R).
\end{equation}

Split $B_4$ into
\[
E_1=\left\{|Z|\leq\f R2\right\},
\quad
E_2=\left\{|Z-Re_1|\leq\f R2\right\},
\quad
E_3=B_4\backslash(E_1\cup E_2).
\]
On $E_1$ we have
\[
\f R2\leq |Z-Re_1|\leq \f{3R}{2}.
\]
Hence
\[
\psi_{\sigma'}(|Z-Re_1|)
\leq
C\psi_{\sigma'}(R).
\]
It follows that
\be
\int_{E_1}\psi_\sigma(|Z|)\psi_{\sigma'}(|Z-Re_1|)\ud Z
\leq
C\psi_{\sigma'}(R)
\int_0^{\f{R}{2}}\psi_\sigma(r)r^{n-1}\ud r.
\label{checkproduct}
\ee
Moreover,
\[
\int_0^{\f{R}{2}}\psi_\sigma(r)r^{n-1}\ud r
\leq
C
\begin{cases}
R^\sigma,&\sg\in(0,n),\\[3pt]
R^n(1+|\log R|),&\sigma=n,\\[3pt]
R^n,&\sg\in(n,+\ift).
\end{cases}
\]
We now check that the right-hand side of \eqref{checkproduct} is bounded by $C\psi_{\sigma+\sigma'}(R)$.

\begin{itemize}
\item If $\sigma<n$ and $\sigma'<n$, this is the usual power-counting estimate:
\[
\psi_{\sigma'}(R)R^\sigma
=
R^{\sigma+\sigma'-n},
\]
with the borderline case $\sigma+\sigma'=n$ controlled by
\[
1\leq C(1+|\log R|)=C\psi_{\sigma+\sigma'}(R).
\]

\item If $\sigma=n$ and $0<\sigma'<n$, then
\[
\psi_{\sigma'}(R)R^n(1+|\log R|)
=
R^{\sigma'}(1+|\log R|)
\leq C
=
C\psi_{\sigma+\sigma'}(R),
\]
since $\sigma+\sigma'>n$. The case where $\sg\in(0,n)$ and $\sigma'=n$ is similar.

\item If $\sg\in(n,+\ift)$ and $0<\sigma'<n$, then
\[
\psi_{\sigma'}(R)R^n
=
R^{\sigma'}
\leq C
=
C\psi_{\sigma+\sigma'}(R).
\]

\item The remaining cases, in which $\sigma'\geq n$ or both parameters are at least $n$, are easier, since $\psi_{\sigma+\sigma'}(R)=1$ and the product is uniformly bounded for $R\in(0,2]$.
\end{itemize}

Therefore, the contribution of $E_1$ is bounded by $C\psi_{\sigma+\sigma'}(R)$. The estimate on $E_2$ is identical.

We now estimate the contribution of $E_3$. On $E_3$, both $|Z|$ and $|Z-Re_1|$ are bounded below by $\f{R}{2}$. Let $N\in\Z_+$ be chosen so that
\[
2^N R\leq4<2^{N+1}R.
\]
We split
\[
E_3=E_{3,0}\cup\left(\bigcup_{j=1}^N E_{3,j}\right),
\]
where
\[
E_{3,0}=E_3\cap\left\{\f R2\leq |Z|\leq2R\right\},
\]
and, for $j\in\Z\cap[1,N]$,
\[
E_{3,j}=E_3\cap\left\{2^jR<|Z|\leq2^{j+1}R\right\}.
\]

On $E_{3,0}$, we have $|Z|\sim R$ and $|Z-Re_1|\sim R$. Indeed, $|Z-Re_1|\geq \f{R}{2}$ by the definition of $E_3$, while
\[
|Z-Re_1|\leq |Z|+R\leq3R.
\]
Therefore
\begin{equation}\label{E30estimate}
\int_{E_{3,0}}
\psi_\sigma(|Z|)
\psi_{\sigma'}(|Z-Re_1|)
\ud Z
\leq
C R^n\psi_\sigma(R)\psi_{\sigma'}(R).
\end{equation}
Moreover,
\be
R^n\psi_\sigma(R)\psi_{\sigma'}(R)
\leq
C\psi_{\sigma+\sigma'}(R),
\quad R\in(0,2].
\label{E30estimate1inter}
\ee
This, together with \eqref{E30estimate}, implies
\begin{equation}\label{E30estimate1}
\int_{E_{3,0}}
\psi_\sigma(|Z|)
\psi_{\sigma'}(|Z-Re_1|)
\ud Z
\leq
C\psi_{\sigma+\sigma'}(R),
\quad R\in(0,2].
\end{equation}

The estimate \eqref{E30estimate1inter} follows from the following direct calculations.
\begin{itemize}
\item If $\sigma<n$ and $\sigma'<n$, then
\[
R^n\psi_\sigma(R)\psi_{\sigma'}(R)
=
R^{\sigma+\sigma'-n},
\]
and the conclusion follows directly, with the case $\sigma+\sigma'=n$ controlled by
\[
1\leq C(1+|\log R|).
\]

\item If one of $\sigma,\sigma'$ equals $n$, the possible logarithmic factor is multiplied by a positive power of $R$ unless both equal $n$; in all such cases $\sigma+\sigma'>n$, and the expression is uniformly bounded for $R\in(0,2]$.

\item If one of $\sigma,\sigma'$ is larger than $n$, the estimate is immediate from $\psi_{\sigma+\sigma'}(R)=1$.
\end{itemize}

For $j\in\Z\cap[1,N]$ and $Z\in E_{3,j}$, we have $|Z|\sim2^jR$. Moreover, since $j\geq1$,
\[
|Z-Re_1|\geq |Z|-R\geq \f12 |Z|,
\quad
|Z-Re_1|\leq |Z|+R\leq2|Z|.
\]
Thus $|Z-Re_1|\sim2^jR$, and hence
\be
\int_{E_{3,j}}
\psi_\sigma(|Z|)
\psi_{\sigma'}(|Z-Re_1|)
\ud Z
\leq
C(2^jR)^n
\psi_\sigma(2^jR)
\psi_{\sigma'}(2^jR).
\label{summationuse}
\ee

We now sum this estimate in $j$. First suppose that $\sigma<n$ and $\sigma'<n$. Then
\[
(2^jR)^n
\psi_\sigma(2^jR)
\psi_{\sigma'}(2^jR)
=
(2^jR)^{\sigma+\sigma'-n}.
\]

\begin{itemize}
\item If $\sigma+\sigma'<n$, then
\[
\sum_{j=1}^N
(2^jR)^{\sigma+\sigma'-n}
\leq
C R^{\sigma+\sigma'-n}
=
C\psi_{\sigma+\sigma'}(R).
\]

\item If $\sigma+\sigma'=n$, then each summand is bounded by a constant. Since the choice of $N$ gives $N\leq C(1+|\log R|)$,
\[
\sum_{j=1}^N 1
\leq
C(1+|\log R|)
=
C\psi_{\sigma+\sigma'}(R).
\]

\item If $\sigma+\sigma'>n$, then $\sigma+\sigma'-n>0$, and
\[
\sum_{j=1}^N
(2^jR)^{\sigma+\sigma'-n}
\leq
C
=
C\psi_{\sigma+\sigma'}(R).
\]
\end{itemize}

It remains to consider the case where at least one of $\sigma,\sigma'$ is greater than or equal to $n$. Then $\sigma+\sigma'>n$ and $\psi_{\sigma+\sigma'}(R)=1$. Set
\[
r_j=2^jR.
\]
Since $r_j\leq2^NR\leq4$, it suffices to prove a uniform bound for
\[
\sum_{j=1}^N
r_j^n\psi_\sigma(r_j)\psi_{\sigma'}(r_j).
\]
We use the elementary estimate
\begin{equation}\label{dyadic-elementary-estimate}
\sum_{j=1}^N r_j^b(1+|\log r_j|)^q\leq C,
\quad b>0,\quad q\in\{0,1,2\},
\end{equation}
where $C>0$ depends only on $b$ and $q$. To prove \eqref{dyadic-elementary-estimate}, consider first the indices for which $r_j\leq1$. If $j_0$ is the largest integer such that $r_{j_0}\leq1$, then $r_{j_0}>\f{1}{2}$ and, for $j\leq j_0$,
\[
r_j=2^{j-j_0}r_{j_0}\leq2^{j-j_0},
\quad
1+|\log r_j|\leq C(1+j_0-j).
\]
Hence
\[
\sum_{j\leq j_0}r_j^b(1+|\log r_j|)^q
\leq
C\sum_{\ell=0}^{\infty}2^{-b\ell}(1+\ell)^q
\leq C.
\]
For the indices with $r_j>1$, one has $1<r_j\leq4$, and the number of such dyadic indices is bounded by an absolute constant. This proves \eqref{dyadic-elementary-estimate}.

We apply \eqref{dyadic-elementary-estimate} to the remaining cases in the summation of \eqref{summationuse}.
\begin{itemize}
\item If $\sg\in(n,+\ift)$ and $0<\sigma'<n$, then
\[
r_j^n\psi_\sigma(r_j)\psi_{\sigma'}(r_j)
\leq
C r_j^{\sigma'},
\]
and the sum is bounded. The case $\sg\in(0,n)$ and $\sigma'>n$ is identical.

\item If $\sigma=n$ and $0<\sigma'<n$, then
\[
r_j^n\psi_\sigma(r_j)\psi_{\sigma'}(r_j)
\leq
C r_j^{\sigma'}(1+|\log r_j|),
\]
and the sum is bounded. The case $\sg\in(0,n)$ and $\sigma'=n$ is identical.

\item If $\sigma=n$ and $\sigma'=n$, then
\[
r_j^n\psi_\sigma(r_j)\psi_{\sigma'}(r_j)
\leq
C r_j^n(1+|\log r_j|)^2,
\]
and the sum is bounded. If $\sg\in(n,+\ift)$ and $\sigma'=n$, then
\[
r_j^n\psi_\sigma(r_j)\psi_{\sigma'}(r_j)
\leq
C r_j^n(1+|\log r_j|),
\]
and the sum is bounded. The case $\sigma=n$ and $\sigma'>n$ is identical.

\item If $\sg\in(n,+\ift)$ and $\sigma'>n$, then
\[
r_j^n\psi_\sigma(r_j)\psi_{\sigma'}(r_j)
\leq
C r_j^n,
\]
and the sum is bounded.
\end{itemize}

Therefore,
\[
\sum_{j=1}^N
\int_{E_{3,j}}
\psi_\sigma(|Z|)
\psi_{\sigma'}(|Z-Re_1|)
\ud Z
\leq C
\leq
C\psi_{\sigma+\sigma'}(R),
\]
where we have used $\psi_{\sg+\sg'}(R)=1$. Combining this with \eqref{E30estimate1} gives
\[
\int_{E_3}
\psi_\sigma(|Z|)
\psi_{\sigma'}(|Z-Re_1|)
\ud Z
\leq
C\psi_{\sigma+\sigma'}(R).
\]
Together with the estimates on $E_1$ and $E_2$, this proves \eqref{local-Riesz-convolution}.

For $R\in(0,2]$, it remains to estimate the contribution of $|Z|>4$. Since $R\leq2$, for $|Z|>4$ we have
\[
|Z-Re_1|\geq |Z|-R\geq\f12 |Z|.
\]
Thus the exponential decay gives
\[
\int_{|Z|>4}
k_\sigma^{(a)}(|Z|)
k_{\sigma'}^{(a)}(|Z-Re_1|)
\ud Z
\leq C.
\]
On the other hand, for $R\in(0,2]$ one has $\psi_{\sigma+\sigma'}(R)\geq c>0$. Indeed, this is clear if $\sigma+\sigma'\geq n$; if $\sigma+\sigma'<n$, then
\[
\psi_{\sigma+\sigma'}(R)=R^{\sigma+\sigma'-n}\geq 2^{\sigma+\sigma'-n}.
\]
Since $\exp\{-\f{aR}{2}\}\sim1$ for $R\in(0,2]$, we conclude that
\be
\int_{\R^n}
k_\sigma^{(a)}(|Z|)
k_{\sigma'}^{(a)}(|Z-Re_1|)
\ud Z
\leq
Ck_{\sigma+\sigma'}^{(\f a2)}(R),
\quad R\in(0,2].
\label{CKsgsgprimR02}
\ee

\subsection{The case $R>2$}

In this case we use the triangle inequality to separate the exponential decay from the singular factors. Since
\[
|Z|+|Z-Re_1|\geq R,
\]
we have
\[
\exp\{-a|Z|\}\exp\{-a|Z-Re_1|\}
\leq
\exp\left\{-\f{aR}{2}\right\}
\exp\left\{-\f a2\left(|Z|+|Z-Re_1|\right)\right\}.
\]
Therefore,
\[
\begin{aligned}
&\int_{\R^n}
k_\sigma^{(a)}(|Z|)
k_{\sigma'}^{(a)}(|Z-Re_1|)
\ud Z
\\
&\quad
\leq
\exp\left\{-\f{aR}{2}\right\}
\int_{\R^n}
\psi_\sigma(|Z|)
\psi_{\sigma'}(|Z-Re_1|)
\exp\left\{-\f a2\left(|Z|+|Z-Re_1|\right)\right\}
\ud Z.
\end{aligned}
\]
We claim that
\begin{equation}\label{large-R-psi-estimate}
\int_{\R^n}
\psi_\sigma(|Z|)
\psi_{\sigma'}(|Z-Re_1|)
\exp\left\{-\f a2\left(|Z|+|Z-Re_1|\right)\right\}
\ud Z
\leq
C\psi_{\sigma+\sigma'}(R).
\end{equation}

First, suppose that $\sigma+\sigma'\geq n$. Since $\psi_{\sigma+\sigma'}(R)\geq1$ for $R>2$, it is enough to prove a uniform bound. Split
\[
\R^n=A_1\cup A_2\cup A_3,
\]
where
\[
A_1=\{Z:|Z|\leq1\},
\quad
A_2=\{Z:|Z-Re_1|\leq1\},
\quad
A_3=\R^n\backslash(A_1\cup A_2).
\]
On $A_1$, since $R>2$, we have
\[
|Z-Re_1|\geq R-1\geq\f R2.
\]
Thus
\[
\psi_{\sigma'}(|Z-Re_1|)
\exp\left\{-\f a2|Z-Re_1|\right\}
\leq C,
\]
where $C$ is independent of $R$. Therefore,
\[
\int_{A_1}
\psi_\sigma(|Z|)
\psi_{\sigma'}(|Z-Re_1|)
\exp\left\{-\f a2\left(|Z|+|Z-Re_1|\right)\right\}
\ud Z
\leq
C\int_{|Z|\leq1}\psi_\sigma(|Z|)\ud Z
\leq C.
\]
The estimate on $A_2$ is identical, with $\sigma$ and $\sigma'$ interchanged.

On $A_3$, both $|Z|$ and $|Z-Re_1|$ are at least $1$. For any $\theta>0$,
\[
\psi_\theta(r)\exp\left\{-\f{ar}{4}\right\}
\leq C_\theta,
\quad r\geq1.
\]
Consequently, on $A_3$,
\[
\begin{aligned}
&
\psi_\sigma(|Z|)
\psi_{\sigma'}(|Z-Re_1|)
\exp\left\{-\f a2\left(|Z|+|Z-Re_1|\right)\right\}
\\
&\quad
\leq
C
\exp\left\{-\f a4|Z|\right\}
\exp\left\{-\f a4|Z-Re_1|\right\}.
\end{aligned}
\]
Hence
\[
\begin{aligned}
&\int_{A_3}
\psi_\sigma(|Z|)
\psi_{\sigma'}(|Z-Re_1|)
\exp\left\{-\f a2\left(|Z|+|Z-Re_1|\right)\right\}
\ud Z
\\
&\quad
\leq
C
\int_{\R^n}
\exp\left\{-\f a4|Z|\right\}
\exp\left\{-\f a4|Z-Re_1|\right\}
\ud Z.
\end{aligned}
\]
The last integral is uniformly bounded in $R$. Indeed, if $h(Z)=\exp\{-\f{a}{4}|Z|\}$, then the last integral equals $(h*h)(Re_1)$, and
\[
\|h*h\|_{L^\infty(\R^n)}
\leq
\|h\|_{L^1(\R^n)}\|h\|_{L^\infty(\R^n)}
\leq C.
\]
This proves \eqref{large-R-psi-estimate} when $\sigma+\sigma'\geq n$.

It remains to consider $\sigma+\sigma'<n$. Then necessarily $\sigma<n$ and $\sigma'<n$. Dropping the exponential factor, we obtain
\[
\begin{aligned}
&\int_{\R^n}
\psi_\sigma(|Z|)
\psi_{\sigma'}(|Z-Re_1|)
\exp\left\{-\f a2\left(|Z|+|Z-Re_1|\right)\right\}
\ud Z
\\
&\quad
\leq
\int_{\R^n}
|Z|^{\sigma-n}
|Z-Re_1|^{\sigma'-n}
\ud Z.
\end{aligned}
\]
By the scaling $Z=RW$, the last integral equals
\[
R^{\sigma+\sigma'-n}
\int_{\R^n}
|W|^{\sigma-n}
|W-e_1|^{\sigma'-n}
\ud W.
\]
The integral in $W$ is finite: near $0$ this follows from $\sigma>0$, near $e_1$ from $\sigma'>0$, and at infinity from $\sigma+\sigma'<n$. Hence
\[
\int_{\R^n}
|Z|^{\sigma-n}
|Z-Re_1|^{\sigma'-n}
\ud Z
\leq
C R^{\sigma+\sigma'-n}
=
C\psi_{\sigma+\sigma'}(R).
\]
This also proves \eqref{large-R-psi-estimate} when $\sigma+\sigma'<n$.

Combining \eqref{large-R-psi-estimate} with the exponential factor gives
\[
\int_{\R^n}
k_\sigma^{(a)}(|Z|)
k_{\sigma'}^{(a)}(|Z-Re_1|)
\ud Z
\leq
C\psi_{\sigma+\sigma'}(R)\exp\left\{-\f{aR}{2}\right\}.
\]
Since
\[
k_{\sigma+\sigma'}^{(\f a2)}(R)
=
\psi_{\sigma+\sigma'}(R)\exp\left\{-\f{aR}{2}\right\},
\]
the preceding estimate is exactly \eqref{dimensionless-Bessel-convolution} for $R>2$. Together with the case $R\in(0,2]$ in \eqref{CKsgsgprimR02}, this proves \eqref{dimensionless-Bessel-convolution}. Rescaling back gives \eqref{Bessel-kernel-convolution}.

\section*{Acknowledgment}

The author acknowledges the use of ChatGPT, developed by OpenAI, for assistance with some preliminary calculations and for checking the consistency of intermediate estimates. All AI-assisted calculations and suggestions were independently verified by the author, who takes full responsibility for the mathematical content of the article.

\bibliographystyle{plain}

\begin{thebibliography}{10}

\bibitem{AS16}
S.~N. Armstrong and Z.~Shen.
\newblock Lipschitz estimates in almost-periodic homogenization.
\newblock {\em Commun. Pure Appl. Math.}, 69(10):1882--1923, 2016.

\bibitem{AL87}
M.~Avellaneda and F.~Lin.
\newblock Compactness methods in the theory of homogenization.
\newblock {\em Commun. Pure Appl. Math.}, 40(6):803--847, 1987.

\bibitem{BLP78}
A.~Bensoussan, J.-L. Lions, and G.~Papanicolaou.
\newblock {\em Asymptotic analysis for periodic structures}, volume~5 of {\em
  Stud. Math. Appl.}
\newblock Elsevier, Amsterdam, 1978.

\bibitem{BS04}
M.~Sh. Birman and T.~A. Suslina.
\newblock Second order periodic differential operators. {Threshold} properties
  and homogenization.
\newblock {\em St. Petersbg. Math. J.}, 15(5):639--714, 2004.

\bibitem{BS06}
M.~Sh. Birman and T.~A. Suslina.
\newblock Homogenization with corrector term for periodic elliptic differential
  operators.
\newblock {\em St. Petersbg. Math. J.}, 17(6):897--973, 2006.

\bibitem{CDK08}
S.~Cho, H.~Dong, and S.~Kim.
\newblock On the {Green}'s matrices of strongly parabolic systems of second
  order.
\newblock {\em Indiana Univ. Math. J.}, 57(4):1633--1677, 2008.

\bibitem{CDK12}
S.~Cho, H.~Dong, and S.~Kim.
\newblock Global estimates for {Green}'s matrix of second order parabolic
  systems with application to elliptic systems in two dimensional domains.
\newblock {\em Potential Anal.}, 36(2):339--372, 2012.

\bibitem{DK14}
H.~Dong and S.~Kim.
\newblock Green's functions for parabolic systems of second order in
  time-varying domains.
\newblock {\em Commun. Pure Appl. Anal.}, 13(4):1407--1433, 2014.

\bibitem{Gen23}
J.~Geng.
\newblock Gaussian bounds and asymptotic expansions of {Green} function in
  parabolic homogenization.
\newblock {\em Calc. Var. Partial Differ. Equ.}, 62(6):40, 2023.
\newblock Id/No 170.

\bibitem{GengNiu25}
J.~Geng and W.~Niu.
\newblock Homogenization of locally periodic parabolic operators with
  non-self-similar scales.
\newblock {\em Calc. Var. Partial Differ. Equ.}, 64(4):34, 2025.
\newblock Id/No 115.

\bibitem{GS15}
J.~Geng and Z.~Shen.
\newblock Uniform regularity estimates in parabolic homogenization.
\newblock {\em Indiana Univ. Math. J.}, 64(3):697--733, 2015.

\bibitem{GS17}
J.~Geng and Z.~Shen.
\newblock Convergence rates in parabolic homogenization with time-dependent
  periodic coefficients.
\newblock {\em J. Funct. Anal.}, 272(5):2092--2113, 2017.

\bibitem{GS20b}
J.~Geng and Z.~Shen.
\newblock Asymptotic expansions of fundamental solutions in parabolic
  homogenization.
\newblock {\em Anal. PDE}, 13(1):147--170, 2020.

\bibitem{GengShen20NS}
J.~Geng and Z.~Shen.
\newblock Homogenization of parabolic equations with non-self-similar scales.
\newblock {\em Arch. Ration. Mech. Anal.}, 236(1):145--188, 2020.

\bibitem{GS20}
J.~Geng and B.~Shi.
\newblock Green's matrices and boundary estimates in parabolic homogenization.
\newblock {\em J. Differ. Equations}, 269(4):3031--3066, 2020.

\bibitem{Gri04}
G.~Griso.
\newblock Error estimate and unfolding for periodic homogenization.
\newblock {\em Asymptotic Anal.}, 40(3-4):269--286, 2004.

\bibitem{Gri06}
G.~Griso.
\newblock Interior error estimate for periodic homogenization.
\newblock {\em Anal. Appl., Singap.}, 4(1):61--79, 2006.

\bibitem{HK04}
S.~Hofmann and S.~Kim.
\newblock Gaussian estimates for fundamental solutions to certain parabolic
  systems.
\newblock {\em Publ. Mat., Barc.}, 48(2):481--496, 2004.

\bibitem{KLS14}
C.~Kenig, F.~Lin, and Z.~Shen.
\newblock Periodic homogenization of {Green} and {Neumann} functions.
\newblock {\em Commun. Pure Appl. Math.}, 67(8):1219--1262, 2014.

\bibitem{KLS12}
C.~E. Kenig, F.~Lin, and Z.~Shen.
\newblock Convergence rates in {{$ L^{2} $}} for elliptic homogenization
  problems.
\newblock {\em Arch. Ration. Mech. Anal.}, 203(3):1009--1036, 2012.

\bibitem{KLS13}
C.~E. Kenig, F.~Lin, and Z.~Shen.
\newblock Estimates of eigenvalues and eigenfunctions in periodic
  homogenization.
\newblock {\em J. Eur. Math. Soc. (JEMS)}, 15(5):1901--1925, 2013.

\bibitem{kLS13b}
C.~E. Kenig, F.~Lin, and Z.~Shen.
\newblock Homogenization of elliptic systems with {Neumann} boundary
  conditions.
\newblock {\em J. Am. Math. Soc.}, 26(4):901--937, 2013.

\bibitem{MengNiu24}
Q.~Meng and W.~Niu.
\newblock Homogenization of fundamental solutions for parabolic operators
  involving non-self-similar scales.
\newblock {\em Ann. Mat. Pura Appl. (4)}, 203(5):2357--2382, 2024.

\bibitem{Niu24Para}
W.~Niu.
\newblock Reiterated homogenization of parabolic systems with several spatial
  and temporal scales.
\newblock {\em J. Funct. Anal.}, 286(9):61, 2024.
\newblock Id/No 110365.

\bibitem{NSX18}
W.~Niu, Z.~Shen, and Y.~Xu.
\newblock Convergence rates and interior estimates in homogenization of higher
  order elliptic systems.
\newblock {\em J. Funct. Anal.}, 274(8):2356--2398, 2018.

\bibitem{NSX20}
W.~Niu, Z.~Shen, and Y.~Xu.
\newblock Quantitative estimates in reiterated homogenization.
\newblock {\em J. Funct. Anal.}, 279(11):39, 2020.
\newblock Id/No 108759.

\bibitem{NX19}
W.~Niu and Y.~Xu.
\newblock Uniform boundary estimates in homogenization of higher-order elliptic
  systems.
\newblock {\em Ann. Mat. Pura Appl. (4)}, 198(1):97--128, 2019.

\bibitem{NXZ26}
W.~Niu, Y.~Xu, and J.~Zhuge.
\newblock Optimal convergence rates in multiscale elliptic homogenization.
\newblock {\em Comm. Pure Appl. Math.}, page e70047, 2026.

\bibitem{OSY92}
O.~A. Ole{\u{\i}}nik, A.~S. Shamaev, and G.~A. Yosifian.
\newblock {\em Mathematical problems in elasticity and homogenization},
  volume~26 of {\em Stud. Math. Appl.}
\newblock Amsterdam etc.: North-Holland, 1992.

\bibitem{OV07}
D.~Onofrei and B.~Vernescu.
\newblock Error estimates for periodic homogenization with non-smooth
  coefficients.
\newblock {\em Asymptotic Anal.}, 54(1-2):103--123, 2007.

\bibitem{She08}
Z.~Shen.
\newblock {{$W^{1,p}$}} estimates for elliptic homogenization problems in
  nonsmooth domains.
\newblock {\em Indiana Univ. Math. J.}, 57(5):2283--2298, 2008.

\bibitem{She18}
Z.~Shen.
\newblock {\em Periodic homogenization of elliptic systems}, volume 269 of {\em
  Oper. Theory: Adv. Appl.}
\newblock Cham: Birkh{\"a}user, 2018.

\bibitem{Shi25Neu}
B.~Shi.
\newblock Uniform boundary estimates for {Neumann} problems in parabolic
  homogenization.
\newblock {\em Anal. Appl., Singap.}, 23(1):31--63, 2025.

\bibitem{Sus13a}
T.~A. Suslina.
\newblock Homogenization of the {Dirichlet} problem for elliptic systems:
  {{$L_2$}}-operator error estimates.
\newblock {\em Mathematika}, 59(2):463--476, 2013.

\bibitem{Sus13b}
T.~A. Suslina.
\newblock Homogenization of the {Neumann} problem for elliptic systems with
  periodic coefficients.
\newblock {\em SIAM J. Math. Anal.}, 45(6):3453--3493, 2013.

\bibitem{Wan23}
W.~Wang.
\newblock Uniform estimates of resolvents in homogenization theory of elliptic
  systems.
\newblock {\em J. Differ. Equations}, 370:1--65, 2023.

\bibitem{WZ23}
W.~Wang and T.~Zhang.
\newblock Homogenization theory of elliptic system with lower order terms for
  dimension two.
\newblock {\em Commun. Pure Appl. Anal.}, 22(3):787--824, 2023.

\bibitem{Xu16Lower}
Q.~Xu.
\newblock Uniform regularity estimates in homogenization theory of elliptic
  system with lower order terms.
\newblock {\em J. Math. Anal. Appl.}, 438(2):1066--1107, 2016.

\bibitem{Xu16LowerNeu}
Q.~Xu.
\newblock Uniform regularity estimates in homogenization theory of elliptic
  systems with lower order terms on the {Neumann} boundary problem.
\newblock {\em J. Differ. Equations}, 261(8):4368--4423, 2016.

\bibitem{Xu23Para}
Y.~Xu.
\newblock Convergence rates in homogenization of parabolic systems with locally
  periodic coefficients.
\newblock {\em J. Differ. Equations}, 367:1--39, 2023.

\bibitem{ZKO94}
V.~V. Zhikov, S.~M. Kozlov, and O.~A. Olejnik.
\newblock {\em Homogenization of differential operators and integral
  functionals. {Transl}. from the {Russian} by {G}. {A}. {Yosifian}}.
\newblock Berlin: Springer-Verlag, 1994.

\end{thebibliography}

\end{document}